\newtheorem{theorem}{Theorem}[section]
\newtheorem{proposition}[theorem]{Proposition}
\newtheorem{lemma}[theorem]{Lemma}
\newtheorem{remark}[theorem]{Remark}
\newtheorem{definition}[theorem]{Definition}
\newcommand{\RR}{{\mathbb R}}
\newcommand{\rd}{{\rm d}}
\title[A new collision avoidance model with random batch resolution strategy]{A new collision avoidance model with random batch resolution strategy} 
\author[Tianlu Chen, Chang Yang, L\'eon Matar Tine and Zhichang Guo]{}
\subjclass[]{}
\keywords{Interacting particles systems, Collision avoidance, Random batch methods, Mean-field limit}
\email{tianlu\_chen@qq.com}
 \email{yangchang@hit.edu.cn}
 \email{leon-matar.tine@univ-lyon1.fr}
 \email{mathgzc@hit.edu.cn}
\begin{document}
\maketitle

\centerline{\scshape Tianlu Chen}
\medskip
{\footnotesize
    \centerline{School of Mathematics, Harbin Institute of Technology,}
    \centerline{No. 92 West Dazhi Street, Nangang District, 150001 Harbin, China}
}

\medskip

\centerline{\scshape Chang Yang}
\medskip
{\footnotesize
    \centerline{School of Mathematics, Harbin Institute of Technology,}
    \centerline{No. 92 West Dazhi Street, Nangang District, 150001 Harbin, China}
}

\medskip

\centerline{\scshape L\'eon Matar Tine}
\medskip
{\footnotesize
    \centerline{Universit\'e Claude Bernard Lyon 1, Institut Camille Jordan,}
    \centerline{UMR 5208, F-69622, Villeurbanne cedex, France}
}

\medskip

\centerline{\scshape Zhichang Guo}
\medskip
{\footnotesize
    \centerline{School of Mathematics, Harbin Institute of Technology,}
    \centerline{No. 92 West Dazhi Street, Nangang District, 150001 Harbin, China}
}

\begin{abstract}
Research on crowd simulation has important and wide range of applications. The main difficulty  is how to lead all particles with a same and simple rule, especially when particles are numerous. In this  paper, we firstly propose a two dimensional agent-based collision avoidance model, which is a $N$-particles  Newtonian system. The collision interaction force, imminent interaction force and following interaction force  are designed, so that  particles can be guided to their respective destinations without collisions.  The proposed agent-based model is then extended to the corresponding mean field limit model as $N\to\infty$.  Secondly, notice that direct simulation of the $N$-particles Newtonian system is very time-consuming, since the computational complexity is of order $\mathcal{O}(N^2)$. In contrast, we propose an efficient hybrid resolution strategy  to reduce the computational complexity. It is a combination of the Random Batch method (Shi Jin, Lei Li, and Jian-Guo  Liu. Random batch methods (RBM) for interacting particle systems. Journal of Computational Physics,  400:108877, 2020.) and the method based on local particles Newtonian system. Thanks to this hybrid resolution strategy, the computational complexity is reduced to $\mathcal{O}(N)$. Finally, various tests are presented to show robustness and  efficiency of our collision avoidance model and the hybrid  resolution strategy.
\end{abstract}


\tableofcontents
\section{Introduction}
\label{sec:introduction}
Crowd simulation mainly uses computers to simulate the activity of a group of people, such as group walking and targeted crowd behavior, which attracts significant attentions with a wide range of applications \cite{bellomo2011modeling}. It can test the safety for preventing crowd incidents \cite{helbing2007dynamics,helbing2002crowd} or evaluating evacuation scenarios \cite{liu2016agent,BELLOMO20161} and the architecture for assessing the population carrying capacity of public spaces \cite{bah2006agent}. Moreover, the crowd model can be designed for monitoring of natural disasters \cite{mustapha2013modeling} or sensing in rescue missions \cite{kopfstedt2008control,hoekstra2001free}. Motivated by the COVID-19 pandemic, crowd modeling can also be developed to study the complex interactions between crowd motion and virus spreading \cite{shamil2021agent}. A recent review on crowd models can be seen in \cite{bellomo2022towards}.

These important issues motivate further research on the agent-based model for swarming. Inside the swarm, agents with similar size and shape interact with each other, cooperate to accomplish a given task and move to destinations by following the same rules. The nature provides many great examples of decentralized, collective behaviors without colliding in swarms, such as flocks of fishes, groups of birds, swarms of insects \cite{bonabeau1999swarm,camazine2020self,giardina2008collective,degond2017coagulation}. In spite of limited sensing capability and physiological constraints, such biological groups have outstanding performance of maintaining optimized group structure and avoiding collisions. Motivated by research on animals' behavior, many widely used models have been constructed, such as optimal control models \cite{hoogendoorn2003simulation}, traffic following models \cite{lemercier2012realistic} and vision-based models \cite{parzani2017three,ondvrej2010synthetic,degond2013hierarchy}.

In this paper, our interest is to propose an agent-based model for swarming and to give an efficient numerical method to solve the model. The model in this paper belongs to the class of vision-based models, where each agent has a limited sensing capability. To achieve the aim, we inspire the ideas  from vision-based models in \cite{parzani2017three, ondvrej2010synthetic}, where the particles, with limited sensor called ``vision cone'', are considered equal and follow the same simple rules to observe, detect, and react for collision avoidance. It is noted that the model in \cite{parzani2017three} is a Newtonian system constructed for three-dimensional space and each particle only changes its orientation to avoid collisions, maintaining the magnitude of its velocity. Thus unavoidable collision can be caused when particles are dense. Meanwhile, each particle in \cite{ondvrej2010synthetic} is directly manufactured by changing only its velocity in two-dimensional space, and each particle can deviate or decelerate to avoid collisions. In contrast to these two models, our model, called the original model in the sequel, is a two-dimensional Newtonian system with three interacting forces for deviating, decelerating and aligning purposes respectively. The force for decelerating can avoid imminent collisions when particles are too close while the force for aligning can lead particles with the same motion direction to line up, which relieves congestion especially when particles are dense.

To achieve a better understanding of crowd behavior and to increase the reliability of predictions, numerical modeling and simulation are playing an ever-growing role. When the number of particles $N$ is large, it is well known that simulation of the model is very expensive since for each time step, particles interact with each other pairwise and the computational complexity is of order $\mathcal{O}(N^2)$.  In the era of big data, many stochastic algorithms have been proposed to reduce the computational complexity \cite{bottou_1999,bubeck2015convex,ma2015complete}.  In \cite{jin2020random}, the algorithm, called Random Batch Methods (RBM), is developed for interacting particle systems. The idea is that, for a small duration of time, particles are divided into small batches randomly and each particle only interact inside the batch. The size of each batch is prefixed to be $p$ at most ($p\ll N$, often $p=2$). So the computational complexity of RBM is reduced from $\mathcal{O}(N^2)$ to $\mathcal{O}(pN)$. When the  shuffle frequency of batches tends to infinity, the error of RBM with respect to the original model would converge to zero, as is proved in \cite{jin2020random}. 

One can also understand swarming by the mean-field approaches  \cite{lasry2007mean,georges1996dynamical}, where the effect of the surrounding particles is replaced by an averaged one. This substitution principle obtains a mean-field equation from the agent-based model, which is free from the dimensionality. As $N\to\infty$, the mean-field limit about the particles' distribution function is obtained, which satisfies a nonlinear Fokker-Planck equation \cite{jin2020random,jin2022mean}. Moreover, the model solved by RBM can also be seen as a new agent-based model for collision avoidance, called RBM model, and its distribution function converges to the corresponding mean-field limit equation when $N\to\infty$.

Although RBM is efficient for simulation, overlapping of particles can happen if $N$ is large and the  shuffle frequency of batches is not large enough. It is because that neighbouring particles  may not be assigned in the same batch and can not detect each other. This phenomenon is unrealistic and unacceptable in the agent-based model for the purpose to give a motion planning without colliding. To improve the performance of the algorithm, we introduce the Cell-List approach \cite{frenkel2001understanding} to avoid the overlapping phenomenon. The idea of the Cell-List approach is to divide the whole space into cells of equal size and each particle only interact with particles in the same cell or neighbouring cells. Thus, combining the Cell-List approach with RBM, we construct a hybrid method to solve the agent-based model, called the hybrid model in the sequel. Since the volume of each particle is prefixed and the number of particles in one cell has a upper bound independent of the total particles' number $N$, the computational complexity of the hybrid model is thus reduced to $\mathcal{O}(N)$.

The remainder of the paper is organized as follows. Our agent-based model for interacting particles is described in Section~\ref{sec:Themodel}. In Section~\ref{sec:Meanfield}, the associated mean-field limit is formally derived and analyzed. We show the existence of the weak solution and the crucial Lipschitz property of the force field of the mean-field limit under reasonable assumptions. The RBM model and the hybrid model are discussed in Section~\ref{sec:Algorithm}. Section~\ref{sec:Numericalresults} is devoted to numerical experiments of the different models and we show the outstanding performance of the hybrid model compared to the original model and the RBM model. Finally, a conclusion is drawn in Section~\ref{sec:Conclusion}.

\section{Modelling}\label{sec:Modeling}
In this section, our interest is modelling the motion of individuals without colliding with each other or obstacles in two-dimensional space. Following the same framework as in~\cite{parzani2017three},
the model consists of two phases:
\begin{itemize}
\item[(1)] a perception phase, where each particle  will evaluate its potential collision risk with others.
\item[(2)] a decision-making phase, where each particle decides its local optimal motion by virtue of computing its current acceleration, in order to attain the destination without collisions.
\end{itemize}
Therefore, in this section, we will first give a preliminary, which defines some key ingredients for modelling. Then, four different strategies of collision avoidance are considered to define a Newtonian system. Finally, the model is extended to the mean field limit by passing particle number $N\to\infty$.

\subsection{Preliminary}
For clarity, some variables and definitions for describing the motion of  particles and assessing the potential collision risk will be introduced in this part. 

For simplicity, the agents, such as people, are symbolized by $N$ interacting motion particles with position $x_i(t)\in \RR^2$ and velocity $v_i(t)\in \RR^2$ ($i \in \{1,\ldots,N\}$). In the perception phase, we should detect all interactions among the particles. However, not all particles around the particle $i$ would be evaluated because of the limited sensing capability. Therefore, the ``vision cone'' $\mathcal{C}_i$ is introduced to the model to denote the set of particles which can be seen by the particle $i$. The sketch of ``vision cone'' is depicted in Figure~\ref{fig:vcone}, and its definition is given as follows.
\begin{definition}\label{def:visioncone}
Set a threshold number $\kappa\in [-1,1]$. The ``vision cone'' $\mathcal{C}_i$ for the particle $i$ is the cone centered at position $x_i(t)$ with angle $\arccos \kappa$ about the direction $v_i(t)$.
\end{definition}

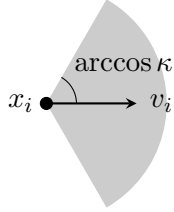
\begin{figure}
\centering
\begin{tikzpicture}[>=stealth, scale=0.8]
\fill[fill=black!20!white] (0,0) -- (-60:2) arc (-60:60:2) --cycle;
\filldraw[fill=black] (0,0) circle (.1) node [left=1pt]{$x_i$};
\draw[->,thick] (0,0) -- (1.5,0) node [right=1pt]{$v_i$};
\draw (0:.5) arc (0:60:.5);
\node at (.2,.2)[label=60:$\arccos \kappa$]{};
\end{tikzpicture}
\caption{\label{fig:vcone}The ``vision cone'' for the particle $i$.}
\end{figure}

When we consider the particle $i$ interacting with the particle $j$, there are totally three different configurations:
\begin{itemize}
\item Safe configuration (depicted in Figure~\ref{fig:confi} (a)): there is no risk for collision to happen. So both particles don't need change their velocities;
\item Blind configuration (depicted in Figure~\ref{fig:confi} (b)): the particle $i$ sees the particle $j$ whereas the particle $j$ does not. Only the particle $i$ is expected to change its velocity to avoid the collision;
\item Unsafe configuration (depicted in Figure~\ref{fig:confi} (c)): both of particles detect each other and will modify their velocities to avoid the collision.
\end{itemize}

\begin{figure}
\begin{center}
\begin{tikzpicture}[>=stealth, scale=0.8]
\tikzset{
vconei/.style = {fill=black!20!white},
vconej/.style = {fill=black!10!white},
dvcone/.style = {fill=black!30!white},
inner sep=0,
minimum size=2mm
}
\begin{scope}
\path [name path=vconexi, fill, vconei] (0,0) node (xi)[label=above:$x_i$,circle, fill=black]{} -- (-50:2) arc (-50:50:2) --cycle coordinate (vconexi);
\path [name path=vconexj, fill, vconej] (-.5,1)node (xj)[label=above:$x_j$,circle, fill=black]{} -- ++(-170:2) arc (-170:-70:2) --cycle coordinate (vconexj);

\draw[->,thick] (xi) -- +(.8,0) node [right=1pt]{$v_i$};
\draw[->,thick] (xj) -- +(-120:.8) node [below=1pt]{$v_j$};

\node at (0,-1.5) {(a)};
\end{scope}

\begin{scope}[xshift=5cm]
\fill [name path=vconexi, vconei] (-2,0) node (xi)[label=above:$x_i$,circle, fill=black]{} -- ++(-50:2) arc (-50:50:2) --cycle coordinate (vconexi);
\fill [name path=vconexj, vconej] (-.5,.5)node (xj)[label=above:$x_j$,circle, fill=black]{} -- ++(-50:2) arc (-50:50:2) --cycle coordinate (vconexj);


\draw[->,thick] (xi) -- +(.8,0) node [right=1pt]{$v_i$};
\draw[->,thick] (xj) -- +(.8,0) node [right=1pt]{$v_j$};

\node at (0,-1.5) {(b)};
\end{scope}

\begin{scope}[xshift=8cm,]
\path [name path=vconexj, fill, vconej] (0,.4)++(20:1.8) node (xj){} -- ++(-170:2) arc (-170:-70:2) --cycle coordinate (vconexj);
\path [name path=vconexi, fill, vconei] (0,.4) node (xi){} -- ++(-50:2) arc (-50:50:2) --cycle coordinate (vconexi);

\draw[->,thick] (xi) -- +(.8,0) node [right=1pt]{$v_i$};
\draw[->,thick] (xj) -- +(-120:.8) node [below right=1pt]{$v_j$};
\node[shape=circle,fill=black,label=above:$x_i$] at (xi){};
\node[shape=circle,fill=black,label=above:$x_j$] at (xj){};

\draw[black!5!white,dashed,thick] (xj) -- ++(-170:2) arc (-170:-70:2) -- cycle;

\node at (1,-1.5) {(c)};
\end{scope}
\end{tikzpicture}
\end{center}
\caption{\label{fig:confi}Three different configurations of the particle $i$ interacting with the particle $j$. (a) Safe configuration, (b) Blind configuration, (c) Unsafe configuration. }
\end{figure}
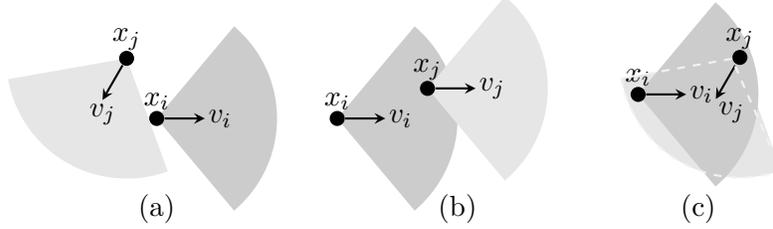

Only blind configuration and unsafe configuration have potential collision risk detected by at least one particle. Therefore, We will focus on these two configurations to design collision avoidance strategies.

Next, we will define three key ingredients in our modelling. Before that let us give some notations.
For any $d\in\mathbb{N}$ and vectors $u=(u_1,\ldots,u_d), v=(v_1,\ldots,v_d) \in \RR^d$, we denote scalar product in $\RR^d$ as follows
\begin{equation*}
<u, v>\  := \sum_{i=1}^d u_iv_i,
\end{equation*}
and the associate norm is denoted by $\|u\|=\sqrt{<u,u>}$. We denote the vector product by $u\times v$, which is defined as the following scalar in the two-dimensional space($d=2$):
\begin{equation*}
u\times v = u_1v_2-u_2v_1.
\end{equation*}
Essentially, it is just cross product in $\RR^3$ restricted to $\RR^2$. And we denote the relative azimuthal angle of $u\in\RR^2$ from $v\in\RR^2$ by $\alpha(u, v)$ (see Figure~\ref{fig:alpha}).
\begin{figure}
\centering
\begin{tikzpicture}[
par/.style={circle,inner sep=0,minimum size=2mm,fill=black!80},
ppar/.style={par,fill=red!80},
>=stealth,scale=0.6]
\node at (0,0) [par]{};
\draw[->,very thick] (0,0) -- (3,0) node [right]{$v$};
\draw[->,very thick] (0,0) -- (120:3) node [above]{$u$};
\draw[shorten >=1pt,>={Stealth[round]},->,thick] (1.,0) arc (0:120:1.);
\node at (0.9,1.3)[]{$\alpha(u, v)$};
\begin{scope}[on background layer]
\fill[black!20](0,0)--(1,0)arc(0:120:1)--cycle;
\end{scope}
\end{tikzpicture}
\caption{\label{fig:alpha}The sketch of the relative azimuthal angle $\alpha(u, v)$.}
\end{figure}
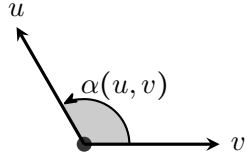

Moreover,  we make two practical assumptions for the modelling as below:
\begin{itemize}
\item[(A1)] Each particle can detect relative positions and velocities of particles in its vision.

\item[(A2)] In the perception phase, each particle makes the assessment based on the assumption that others will maintain a constant velocity without changing motion direction.

\end{itemize}
An interaction between two particles is illustrated in Figure~\ref{fig:sketch}, where the particle $i$ interacts with the particle $j$. 
Let us denote the time $t_0$ as the moment when the particle $i$ is interacting with the particle $j$, and their corresponding positions at $t=t_0$ are abbreviated to $x_i,x_j$ respectively. The interaction points, denoted as $\overline{x}_i,\overline{x}_j$, are depicted in Figure~\ref{fig:sketch}, with the definition given as follows.
\begin{definition}\label{def:sketch}
Under the assumption~(A2), the interaction points $\overline{x}_i,\overline{x}_j$ are the positions for the two particles $i$ and $j$ when their relative distance between each other is minimal, {\it i.e.}
\begin{equation*}
\|\overline{x}_i-\overline{x}_j\| = \min_{t\in \RR} \|x_i(t)-x_j(t)\|.
\end{equation*}
\end{definition}

\begin{figure}
\centering
\begin{tikzpicture}[inner sep=0, minimum size=2mm, scale=0.6]
\draw[dashed] (-5,0) node (xi) [circle, fill=black, label=above:$x_i$]{} -- (0,0) node (bxi) [circle, fill=black, label=120:$\bar{x}_i$]{} -- (5,0);

\draw[->, >=stealth, very thick] (xi) -- +(1.6,0) node[label=above:$v_i$]{};

\path (bxi)++(-20:2) node (bxj) [circle, fill=black, label=-60:$\bar{x}_j$]{};
\draw[dashed] (bxj)++(70:5) node (xj)[circle, fill=black, label=right:$x_j$]{} -- (bxj) -- +(-110:3);
\draw[<->] (bxi) -- node[below=1pt]{$D_{ij}$} (bxj);
\draw[->, very thick, >=stealth] (xj) -- ($(xj)!1.6cm!(bxj)$) node [label=-60:$v_j$]{};
\draw[<->] (xi)+(0,-3mm)-- node [below=1pt]{$\tau_{ij}\cdot \|v_i\|$} ($(bxi)+(0,-3mm)$);
\draw (xi) -- +(0,-5mm)
      (bxi) -- +(0,-5mm);
\end{tikzpicture}
\caption{\label{fig:sketch}Illustration of an interaction between two particles. The particle $i\in \{1,\ldots,N\}$ (with position $x_i\in \RR^2$ and velocity $v_i\in \RR^2$ at time $t_0$) is interacting with the particle $j\in \{1,\ldots,N\}(i\neq j)$ (with position $x_j\in \RR^2$ and velocity $v_j\in \RR^2$ at $t=t_0$). Their shortest distance is $D_{ij}$ when they move to the interaction points $\bar{x}_i,\bar{x}_j$, respectively.}
\end{figure}
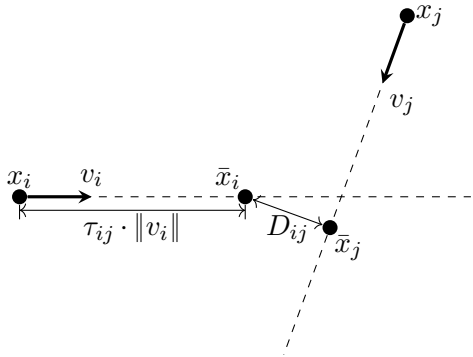

For the convenience of illustrating the model, we introduce two local orthonormal cartesian frames of the particle $i\in\{1,\ldots,N\}$ (depicted in Figure~\ref{fig:frame}). Both frames are centered at position $x_i(t)$. The first frame is denoted by $(e_{\rho_i}, e_{\phi_i})$ with $e_{\rho_i}=v_i/\|v_i\|$; another frame is denoted by $(k_{ij},e_{\alpha_{ij}})$ with $k_{ij}=(x_j-x_i)/\|x_j-x_i\|$. In addition, we denote the relative azimuthal angle from $e_{\rho_{i}}$ to $k_{ij}$ by $\alpha_{ij}\in (-\pi,\pi)$, called the relative bearing angle.

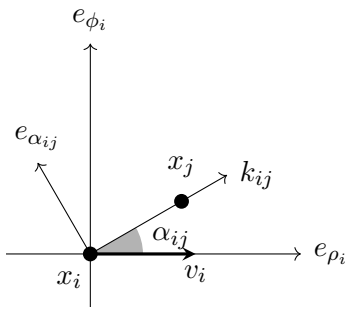
\begin{figure}
\centering
\begin{tikzpicture}[thin,scale=1.4]
\clip (-.8,-0.5) rectangle (2.6,2.6);
\fill[black!30!white] (0,0) -- (.5,0) arc (0:30:.5) -- cycle;
\draw[->] (-2,0)--(2,0) node [right=1pt]{$e_{\rho_i}$};
\draw[->] (0,-2)--(0,2) node [above=1pt]{$e_{\phi_i}$};

\draw[>=stealth,->,very thick] (0,0)--(1,0) node [below] {$v_i$};

\draw[->] (0,0)--(30:1.5) node [right=1pt]{$k_{ij}$};
\draw[->] (0,0)--(120:1) node [above=1pt]{$e_{\alpha_{ij}}$};

\path (.5,0) -- node [right=1pt,yshift=1pt]{$\alpha_{ij}$} (30:.5);
\node (xj) [label=above:{$x_j$},circle] at ($(0,0)!.5!(30:2)$) {};
\fill[black] (0,0) circle (2pt);
\fill[black] (xj) circle (2pt);

\node at (-.2,0)[below=.1] {$x_i$};
\end{tikzpicture}
\caption{\label{fig:frame} Two local orthonormal cartesian frame of the particle $i\in\{1,\ldots,N\}$.}
\end{figure}

The two local orthonormal cartesian frames satisfy the equations below:
\begin{equation*}
\left\{
\begin{array}{l}
k_{ij} = \cos(\alpha_{ij})e_{\rho_i} + \sin(\alpha_{ij})e_{\phi_i},\\[3mm]
e_{\alpha_{ij}} = -\sin(\alpha_{ij})e_{\rho_i} + \cos(\alpha_{ij})e_{\phi_i}.
\end{array}
\right.
\end{equation*}

We denote $d_{ij}(t)$ as the distance between the particle $i$ and the particle $j$, {\it i.e.}
\begin{equation}\label{eq:dij}
d_{ij}(t) = \|x_i(t)-x_j(t)\|.
\end{equation}

Now we are ready to define the three ingredients, which are the time to interaction (TTI), the minimal distance (MD) \cite{moussaid2011simple} and the derivative of the relative bearing angle (DBA) \cite{ondvrej2010synthetic} in following definition.

\begin{definition}\label{def:mdtti}
To evaluate the interaction between the two particles $i$ and $j$, three key ingredients to the model are introduced as follows
\begin{itemize}
\item The time to interaction (TTI), denoted as $\tau_{ij}\in \RR$, represents the time for the particle $i$ to move to its interaction point $\overline{x}_i$ with respect to the particle $j$ from the current position $x_i$ (see Figure~\ref{fig:sketch}).

\item The minimal distance (MD), denoted as $D_{ij}$, represents the closest distance between the two particles $i$ and $j$ predicted under the assumption~(A2) at time $t_0$. From the Definition~\ref{def:sketch}, it satisfies 
\begin{equation*}
D_{ij} = \|\overline{x}_i-\overline{x}_j\|.
\end{equation*}

\item The derivative of the relative bearing angle (DBA), denoted as $\dot\alpha_{ij}$, represents the derivative of the angle between $k_{ij}$ and $e_{\rho_i}$ in the frame constructed, i.e.
\begin{equation*}
\dot\alpha_{ij}={\rd\over\rd t}\alpha(k_{ij},e_{\rho_i})={\rd\over\rd t}\alpha(x_j-x_i,v_i).
\end{equation*}
\end{itemize}
\end{definition}

Similarly as in \cite{parzani2017three}, we state a practical way to compute TTI, MD and DBA,
\begin{itemize}
\item the value of TTI for the particle $i$ with respect to the particle $j$, $\tau_{ij}$, satisfies
\begin{equation*}
\tau_{ij} = -\cfrac{<x_j-x_i,v_j-v_i>}{\|v_j-v_i\|^2},
\end{equation*}
\item the value of MD for the particle $i$ with respect to the particle $j$, $D_{ij}$, satisfies
\begin{equation*}
D_{ij} = \left(\|x_j-x_i\|^2-(\cfrac{<x_j-x_i,v_j-v_i>}{\|v_j-v_i\|})^2\right)^{1\over2},
\end{equation*}
\item the value of DBA for the particle $i$ with respect to the particle $j$, $\dot\alpha_{ij}$, satisfies
\begin{equation*}
\dot\alpha_{ij} = \cfrac{<v_j-v_i,e_{\alpha_{ij}}>}{d_{ij}}.
\end{equation*}
\end{itemize}

\begin{remark}\label{remark:saferadius}
It should be noted that the sign of $\tau_{ij}$ is not restricted to be positive. When $\tau_{ij} < 0$, it means the minimal distance between two particles happened in the past. In this case, there is no potential collision risk. In addition, all particles are identified as a disk with radius $R_0\in\RR^+$. A collision happens when the distance between two particles is smaller than $2R_0$. Thus, we only consider the case when $\tau_{ij}$ is positive and $D_{ij}$ is smaller than the safe radius $R\in\RR^+$. Obviously, we should set $R\geq 2R_0$.
\end{remark}




\subsection{The agent-based collision-avoidance model}\label{sec:Themodel}
Thanks to the above preparation, we can now give the agent-based collision-avoidance model. The key step is to define forces to avoid collisions, here  four different forces will be considered.
\subsubsection{The collision-interaction force}
The collision-interaction force is inspired from~\cite{parzani2017three}, where a force, being perpendicular to the particle velocity, is imposed depending on the potential collision risk. More precisely, two particles are considered to have a potential collision risk if, under the assumption (A2), the MD is small enough and the TTI is nonnegative. Therefore the two particles should take action to change its moving direction to avoid the collision.

In the perception phase, we define the collision-interaction set $\mathcal{K}_i^{Co}(t)$, which consists of particles that have a potential collision risk with the particle $i$ at time $t$, as follows.
\begin{definition}\label{def:K_i}
For a given particle $i$, any particle $j$, satisfying the following three conditions:
\begin{enumerate}
\item $x_j\in \mathcal{C}_i$;
\item $\tau_{ij}$ is nonnegative;
\item $D_{ij}$ is smaller than $R$;
\end{enumerate}
 is in the collision-interaction set $\mathcal{K}_i^{Co}(t)$.
In another word,
\begin{equation}\label{eq:KCo}
\mathcal{K}_i^{Co}(t) = \{j:x_j\in\mathcal{C}_i, \tau_{ij}\geq 0, D_{ij}<R\}\subset\{1,\ldots,N\}.
\end{equation}
\end{definition}

In the decision-making phase, we define the collision-interaction force being perpendicular to the particle velocity. To this end, we first consider interaction between two particles, then we extend the two-particles model to $N$-particles case. Consider at time $t$, the particle $i$ interacts with the particle  $j\in\mathcal{K}_i^{Co}$. The collision-interaction force for the particle $i$ in this two-particles model is determined by the formulation:
\begin{equation}\label{eq:fcoi}
\cfrac{\rd v_i}{\rd t} = \omega_{ij}^{Co} \|v_i\| e_{\phi_i},
\end{equation}
where
\begin{align}
\omega_{ij}^{Co} &= -C_0 \cos^\varepsilon(\alpha_{ij}) e^{-\tau_{ij}/C_1} \cdot g(\dot\alpha_{ij}),\label{eq:omegaCo}\\
\cos^\varepsilon(\alpha_{ij})&=\cos(\alpha(x_j-x_i,v_i))*\eta_\varepsilon, \label{eq:H_ij}\\
\eta_\varepsilon(y,w)&=\begin{cases}
\frac{1}{\varepsilon^4} C \exp\left({\frac{1}{(\|y\|^2+\|w\|^2)/\varepsilon^2-1}}\right), & \|y\|^2+\|w\|^2<\varepsilon^2, \notag\\
0, & \|y\|^2+\|w\|^2\geq \varepsilon^2,\end{cases}\\
g(\dot\alpha_{ij}) &= \cfrac{2}{1+e^{-\dot\alpha_{ij}/\delta_0}}-1+\delta_1.\label{eq:sign}
\end{align}
 $C_0, C_1, \delta_0, \delta_1$ are all positive parameters and will be specified later. $C$ is chosen properly such that 
\begin{equation*}
\iint_{\RR^2\times\RR^2} \eta_{\varepsilon}(y,w)\rd y\rd w=1.
\end{equation*}
The rotation frequency of collision-avoidance interaction, denoted by $\omega_{ij}^{Co}$, describes how quickly the angular velocity of the particle is and which direction the particle rotates. $\cos^\varepsilon(\alpha_{ij})$ is the regularized function of $\cos\alpha(x,v)$ with the mollification operator $\eta_\varepsilon$. The function $g(\cdot)$ is an approximation of the sign function. Thanks to the function $g(\cdot)$, the particle will turn to the right to avoid the collision when  $\dot\alpha_{ij}>0$, otherwise it turns to the left. Moreover, it is easy to notice that small absolute value of DBA indicating high collision risk. However, the particle fails to change its direction provided that $\dot\alpha_{ij}=0$ yields $g(\dot\alpha_{ij})=0$. Instead we add a small disturbance $\delta_1$ to avoid the failure situation.

\begin{remark}
Compared to~\cite{parzani2017three}, our collision-interaction force is much more smooth.
Moreover, similarly as the Lemma 2.9 and the Lemma 2.12 in \cite{parzani2017three}, we can show that $|\dot\alpha_{ij}|$ will increase exponentially if rules in (\ref{eq:fcoi}) is taken. Thus the collision can be avoided in most cases. 
\end{remark}

Now let us consider the $N$-particles model, it is just an overlying of collision-interaction forces between two particles in the set $\mathcal{K}_i^{Co}(t)$. Therefore, the collision-interaction force for the particle $i$ is written as the mean value of all interactions:
\begin{equation}\label{eq:fco}
F^{Co}_i(t) = \cfrac{1}{\overline{\mathcal{K}_i^{Co}}+\beta}\sum^{N}_{j=1} \omega_{ij}^{Co} \mathbbm{1}_{\mathcal{K}_i^{Co}}(j) \|v_i\| e_{\phi_i},
\end{equation}
where $\overline{\mathcal{K}_i^{Co}}$ denotes the number of elements in the set $\mathcal{K}_i^{Co}$. $\beta\in\RR^+$ is a small constant which makes sure the denominator to be positive, and $\mathbbm{1}_{\mathcal{K}_i^{Co}}$ is the characteristic function.

\begin{remark}
Although each particle needs to interact with all other particles to determine $F^{Co}_i(t)$, we divide the sum of all interactions by $\overline{\mathcal{K}_i^{Co}}$ rather than $N$, since the particle $i$  can only see particles in its ``vision cone'' $\mathcal{C}_i$.
\end{remark}

\subsubsection{The imminent-interaction force}
When the density of particles is too high, especially for particles with high velocity, it is hard to avoid some collisions only by changing the orientation. In these cases, we should decrease  velocity module to avoid collisions.

If the particle $j$ is so close to the particle $i$ and yet has a potential collision risk with the particle $i$, it is required that the particle $i$ should decelerate to avoid the imminent collision.  If MD is smaller than $2R_0$, twice of the radius of particles  (see Remark~\ref{remark:saferadius}), it is noticed that TTI is not the real time to collide for the two particles. It may happen that the distance between the two particles is close to $2R_0$, but TTI is not small enough to reflect the imminent collision between the two particles. Thus, we introduce the time to collide (TTC), meaning the time for the two particles to collide, and its definition is given as follows.
\begin{definition}\label{def:taup}
Consider the particle $i$ and the particle $j$ with $D_{ij}\leq 2R_0$ at time $t$, we define time to collide (TTC), denoted by $\tilde{\tau}_{ij}(t)$, as the time for the two particles $i$ and $j$ to collide (see Figure~\ref{fig:TTC}) under the assumption (A2).
\end{definition}

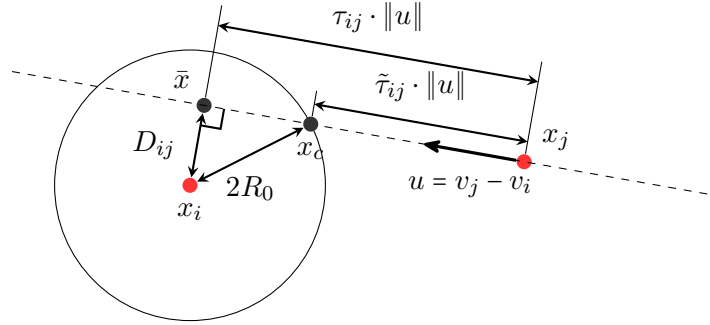
\begin{figure}
\centering
\begin{tikzpicture}[
par/.style={circle,inner sep=0,minimum size=2mm,fill=black!80},
ppar/.style={par,fill=red!80},scale=0.9]
\node [ppar,label=below:$x_i$] (xi) at (0,0) {};
\node [par,label=120:$\bar{x}$] (bxi) at ($(xi)+(80:1.2)$) {};
\node [ppar,label=10:$x_j$] (xj) at ($(bxi)!4!90:(xi)$) {};
\draw [name path=Cir,black] (xi) circle (2);
\draw [<->,>=stealth,thick] (xi) --node [label=left:$D_{ij}$]{} (bxi);
\draw [dashed, name path=Lin] ($(bxi)!-0.6!(xj)$) -- ($(xj)!-0.6!(bxi)$);
\path [name intersections={of=Cir and Lin, name=i}];
\node [par,label=below:$x_c$] (i) at (i-1) {};
\draw [<->,>=stealth,thick] (xi) --node [label=-90:$2R_0$]{} (i);
\draw [thick] ($(bxi)!3mm!(xj)$) -- ($($(bxi)!3mm!(xj)$)!3mm!-90:(xj)$) -- ($(bxi)!3mm!(xi)$);
\draw [->, >= {Stealth[round]},very thick] (xj) --node [label={below:$u=v_j-v_i$}]{} ($(xj)!1.5cm!(bxi)$);
\draw [<->,>=stealth,thick] ($(xj)+(80:3mm)$) --node [label=above:$\tilde{\tau}_{ij}\cdot \|u\|$]{} ($(i)+(80:3mm)$);
\draw [<->,>=stealth,thick] ($(xj)+(80:12mm)$) --node [label=above:${\tau}_{ij}\cdot \|u\|$]{} ($(bxi)+(80:12mm)$);
\draw (i) -- ($(i)+(80:5mm)$)
	(xj) -- ($(xj)+(80:15mm)$)
	(bxi) -- ($(bxi)+(80:15mm)$);
\end{tikzpicture}
\caption{\label{fig:TTC}Illustration of TTI and TTC. $u$ is relative velocity of the particle $j$ to the particle $i$. If particles have no volume, it takes TTC $\tilde{\tau}_{ij}$ for the particle $j$ to $x_c$, and TTI $\tau_{ij}$ to $\bar x$. When the relative position of the particle $j$ is at $\bar x$, the particles $i$ and $j$ are at interaction points $\bar{x}_i, \bar{x}_j$ respectively.}
\end{figure}

The TTC can be practically computed as follows.
\begin{proposition}
Take the same assumption in Definition~\ref{def:taup}, the value of $\tilde{\tau}_{ij}(t_0)$ is given by
\begin{equation}\label{eq:ttau}
\tilde{\tau}_{ij}(t_0) = \tau_{ij} - \cfrac{\sqrt{4R_0^2-D_{ij}^2}}{\|v_i-v_j\|}.
\end{equation}
\end{proposition}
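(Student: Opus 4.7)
The plan is to work entirely in the relative frame of the two particles under assumption (A2). Set $y(s)=x_j(t_0+s)-x_i(t_0+s)$, which by (A2) is the affine function $y(s)=(x_j-x_i)+s(v_j-v_i)$. The squared distance
\begin{equation*}
d_{ij}(t_0+s)^2=\|y(s)\|^2=\|x_j-x_i\|^2+2s\langle x_j-x_i,v_j-v_i\rangle+s^2\|v_j-v_i\|^2
\end{equation*}
is a quadratic polynomial in $s$, so completing the square immediately gives
\begin{equation*}
d_{ij}(t_0+s)^2=D_{ij}^2+(s-\tau_{ij})^2\|v_j-v_i\|^2,
\end{equation*}
where the minimizer and minimum value must coincide with the expressions already recorded for $\tau_{ij}$ and $D_{ij}$ in the practical formulas stated after Definition~\ref{def:mdtti}.

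Next I would invoke the definition of TTC: under (A2) the collision time is the smallest $s$ such that $d_{ij}(t_0+s)=2R_0$. Substituting into the completed-square identity reduces this to the scalar equation
\begin{equation*}
(s-\tau_{ij})^2\|v_j-v_i\|^2=4R_0^2-D_{ij}^2,
\end{equation*}
whose two real roots (well-defined thanks to $D_{ij}\le 2R_0$, which is part of the hypothesis of Definition~\ref{def:taup}) are $s=\tau_{ij}\pm\sqrt{4R_0^2-D_{ij}^2}/\|v_j-v_i\|$.

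To finish, I would argue that the physically relevant root is the smaller one: the two particles first reach separation $2R_0$ before passing through their closest approach at $s=\tau_{ij}$, and then, if they were point particles, would reach $2R_0$ again. Hence the first-collision time is
\begin{equation*}
\tilde{\tau}_{ij}(t_0)=\tau_{ij}-\frac{\sqrt{4R_0^2-D_{ij}^2}}{\|v_i-v_j\|},
\end{equation*}
which gives the claim. The only conceptual subtlety, and the only place one needs to be careful, is the choice of root; everything else is algebra. I would also note in passing that nonnegativity of $\tilde{\tau}_{ij}$ is consistent with the setting of Remark~\ref{remark:saferadius}, where one already has $\tau_{ij}\ge 0$ and $D_{ij}<R$ in the collision-interaction regime, although strictly speaking this is not part of the stated proposition.
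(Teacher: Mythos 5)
Your proposal is correct. The paper itself offers no written proof of this proposition -- it relies entirely on the geometric picture in Figure~\ref{fig:TTC}, where the right triangle with hypotenuse $2R_0$, one leg $D_{ij}$, and the other leg lying along the relative trajectory gives $\sqrt{4R_0^2-D_{ij}^2}=\|u\|\,(\tau_{ij}-\tilde{\tau}_{ij})$ by Pythagoras. Your completing-the-square identity $d_{ij}(t_0+s)^2=D_{ij}^2+(s-\tau_{ij})^2\|v_j-v_i\|^2$ is exactly the analytic form of that triangle, so the route is essentially the same, just written out rather than read off the figure; the added value of your version is that it makes explicit the two points the figure glosses over, namely that $D_{ij}\le 2R_0$ guarantees real roots and that the \emph{smaller} root is the first-contact time. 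The argument is complete as stated.
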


Thanks to $\tilde{\tau}_{ij}$, in the perception phase, we define the imminent-interaction set $\mathcal{K}_i^{Im}(t)$ consisting of particles that have an imminent collision risk with the particle $i$.
\begin{definition}\label{def:R^im}
For a given particle $i$, any particle $j$, satisfying the following four conditions:
\begin{enumerate}
\item $x_j\in \mathcal{C}_i$;
\item  $\tilde\tau_{ij}$ is nonnegative;
\item  $D_{ij}$ is smaller than {a constant threshold} $R^{Im}_0\in (0,R)$;
\item $d_{ij} = \|x_i-x_j\|(t)$ is smaller than a constant threshold $R^{Im}\in(2R_0,\infty)$;
\end{enumerate}
 is in the imminent-interaction set $\mathcal{K}_i^{Im}(t)$. In another word,
\begin{equation}\label{eq:KIm}
\mathcal{K}_i^{Im}(t) = \{j:x_j\in\mathcal{C}_i, \tilde\tau_{ij}\geq 0, D_{ij}<R^{Im}_0, d_{ij}<R^{Im}\}\subset\mathcal{K}_i^{Co}(t).
\end{equation}
\end{definition}

In decision-making phase, consider at time $t$, the particle $i$ interacts with the particle  $j\in\mathcal{K}_i^{Im}$. The imminent-interaction force for the particle $i$ in this two-particles model is determined by the formulation:
\begin{equation}\label{eq:fimi}
\cfrac{\rd v_i}{\rd t} = -\omega_{ij}^{Im} \|v_i\| e_{\rho_i},
\end{equation}
where
\begin{gather}
\omega_{ij}^{Im} = C_2 e^{-d_{ij}\tilde{\tau}_{ij}/C_3},\quad C_2,C_3\in \mathbb{R}^+.\label{eq:omegaim}
\end{gather}
$C_2,C_3$ are positive parameters and will be specified later. The smaller $d_{ij}$ and $\tilde{\tau}_{ij}$ are, the higher collision risk the particle $i$ faces, thus it yields larger $\omega_{ij}^{Im}$. {The acceleration in (\ref{eq:fimi}) is in opposite direction to $v_i$ in order to decelerate the particle $i$ to avoid the imminent collision}. It is noted that the imminent-interaction force only changes  velocity's module, not its orientation.

Now considering $N$-particles model, we averages all interactions for the particles in the imminent-interaction set $\mathcal{K}_i^{Im}$. The imminent-interaction force thus can be described as follows
\begin{equation}\label{eq:fim}
F^{Im}_i(t) = \cfrac{1}{\overline{\mathcal{K}_i^{Im}}+\beta}\sum^{N}_{j=1} \omega_{ij}^{Im} \mathbbm{1}_{\mathcal{K}_i^{Im}}(j) (-v_i),
\end{equation}
where $\overline{\mathcal{K}_i^{Im}}$ denotes the number of elements in the set $\mathcal{K}_i^{Im}$, and $\mathbbm{1}_{\mathcal{K}_i^{Im}}$ is the characteristic function. $\beta\in\RR^+$ takes the same value as in~(\ref{eq:fco}).

\subsubsection{The following-interaction force}
With the collision-interaction force and the imminent-interaction force, one particle modifies its velocity only to avoid collisions with near particles. When the density of particles is too high, some particles may not have enough spacing to avoid collisions and may block up with each other resulting in a dead lock. We hope that particles with the same moving orientation will follow in a line to relieve congestion. Therefore, we introduce the following-interaction force $F^{Fo}_i(t)$ to attain this aim.

So, in the perception phase, we define the following-interaction set $\mathcal{K}_i^{Fo}(t)$ consisting of particles, which have similar orientations as for the particle $i$ and can be followed to relieve congestion.
\begin{definition}\label{def:R^fo}
For a given particle $i$, any particle $j$  satisfying the following three conditions:
\begin{enumerate}
\item $x_j\in \mathcal{C}_i$; 
\item $\tau_{ij}$ is negative;
\item The distance between each other $d_{ij} = \|x_i-x_j\|(t)$ is smaller than a constant threshold $R^{Fo} \in(R_0,\infty)$;
\end{enumerate}
is in following-interaction set $\mathcal{K}_i^{Fo}(t)$. In another word,
\begin{equation}\label{eq:KFo}
\mathcal{K}_i^{Fo}(t) = \{j: x_j\in\mathcal{C}_i, \tau_{ij}<0, d_{ij}< R^{Fo}\}.
\end{equation}
\end{definition}

In the decision-making phase, consider at time $t$, the particle $i$ interacts with the particle $j\in\mathcal{K}_i^{Fo}$. The following-interaction force for the particle $i$ in this two-particles model is determined by the formulation:
\begin{equation*}
\cfrac{\rd v_i}{\rd t} = \omega_{ij}^{Fo} \|v_i\| e_{\phi_i},
\end{equation*}
where
\begin{align}
\omega_{ij}^{Fo} &=  C_4 e^{-|\dot\alpha_{ij}|d_{ij}^2/C_5}\sin^\varepsilon(2\alpha_{ij}), \quad C_4,C_5\in \mathbb{R}^+,\label{eq:omegafo}\\
\sin^\varepsilon(2\alpha_{ij})&=\sin\left(2\alpha(x_j-x_i,v_i)\right)*\eta_\varepsilon.\label{eq:sinus}
\end{align}
$C_4,C_5$ are positive parameters and will be specified later. Intuitively, the particle $i$ shall follow the particle $j$, which is in front of the particle $i$ and moves in the similar orientation. In other words, it is required that their distance $d_{ij}$ is proper ,and $|\dot\alpha_{ij}|$ and  $|\alpha_{ij}|$ are both small. It is interesting to notice that in the following-interaction force, small $|\dot\alpha_{ij}|$ does not indicate high collision risk while it is the case for the collision-avoidance force. The term $e^{-|\dot\alpha_{ij}|d_{ij}^2/C_5}$ in~(\ref{eq:omegafo}) leads the particle $i$ to follow an appropriate leader with low $|\alpha_{ij}|$ and close distance $d_{ij}$, while the term $\sin^\varepsilon(2\alpha_{ij})$ is to decrease the absolute value of the bearing angle $\alpha_{ij}$ to follow up the particle $j$.

In $N$-particles model, the following-interaction force for the particle $i$ satisfies the following formulation:
\begin{equation}\label{eq:ffo}
F^{Fo}_i(t) = \cfrac{1}{\overline{\mathcal{K}_i^{Fo}}+\beta}\sum^{N}_{j=1} \omega_{ij}^{Fo} \mathbbm{1}_{\mathcal{K}_i^{Fo}(j)} \|v_i\|e_{\phi_i}.
\end{equation}

\subsubsection{Influence of obstacles and destinations}
The influence of obstacles and targets is remained to be discussed. Obstacles always have irregular shape and can be seen as particular particles. We define the equivalent point for both obstacles and destinations.
\begin{definition}\label{equivalentpoint}
 Consider a rigid body, denoted by a simply connected subset $O(t)\subset\RR^2$, which has velocity and  compact volume, then  we introduce the equivalent point $x_O$ and the equivalent velocity $v_O$ of $O(t)$ with respect to the particle $i\in\{1,\ldots,N\}$ as follows.
\begin{itemize}
\item The equivalent point $x_O\in O(t)$ is the closest point of $O(t)$ related to the particle $i$, {\it i.e.}
\begin{equation*}
x_O = \underset{x\in \partial O(t) \cap \mathcal{K}^{Co}_i(t)}{\arg\, \min} d(x_i(t),  x),
\end{equation*}
\item The equivalent velocity $v_O$ of $O(t)$ is defined to be the velocity corresponding to  the equivalent point $x_O$.
\end{itemize}
\end{definition}

For any particle $i$, the obstacle $O(t)\subset\RR^2$ is regarded as a particle with position $x_O$ and velocity $v_O$ in perception phase. Therefore, the value of $N$ in~(\ref{eq:fco}) indeed is the total number of particles and obstacles.

In the decision-making phase, in order to lead each particle to its destinations, the exit-interaction force $F^{Ex}_i$ is needed. Let us consider the particle $i$ with the destination set $T\subset\RR^2$ at time $t$. Applying  the same strategy in obstacles, we can define the equivalent point $x_T$ for $T$. Therefore, the exit-interaction force $F^{Ex}_i$ can be given as follows
\begin{equation}\label{eq:fex}
F^{Ex}_i = -\nabla V(x_i) - \sigma v_i, 
\end{equation}
where $\sigma>0$ represents a friction to restrict the maximum velocity, and $V(\cdot)$ is a potential function reaching its minimum at the destination. 

In summary, the agent-based collision-avoidance model consists of perception phase and decision-making phase. In perception phase, each particle observes from its ``vision cone''. For any particle $i\in\{1,\ldots,N\}$, all particles in its ``vision cone'' $\mathcal{C}_i$ will be grouped into four parts: the collision-interaction set $\mathcal{K}_i^{Co}(t)$, the imminent-interaction set $\mathcal{K}_i^{Im}(t)$, the following-interaction set $\mathcal{K}_i^{Fo}(t)$ and others left.  In decision-making phase, particles choose their local optimal motion from the analysis in the perception phase. We suppose all the parameters are regarded as scalars in equations and the model is measured by meter in space and second in time. It is assumed that each particle or one obstacle has a unit mass, and the acceleration of each particle is equal to the value of force with $1m/s^2$ unit. From \eqref{eq:fco},\eqref{eq:fim},\eqref{eq:ffo},\eqref{eq:fex}, the final acceleration for the particle $i\in\{1,\ldots,N\}$ depends on the value of the total force $F_i(t)$,
\begin{equation*}
F_i(t) = F^{Co}_i(t)+F^{Im}_i(t) + F^{Fo}_i(t) + F^{Ex}_i(t).
\end{equation*}

In conclusion, each particle obeys the rules as follows:
\begin{equation}\label{eq:modelODE}
\begin{cases}
\displaystyle\cfrac{\rd x_i}{\rd t} = v_i, \\[3mm]
\displaystyle\cfrac{\rd v_i}{\rd t} = F_i(t) := F^{Co}_i(t)+F^{Im}_i(t) + F^{Fo}_i(t)+F^{Ex}_i(t),
\end{cases}
i\in\{1,\ldots,N\}.
\end{equation}
\begin{remark}
It's noted that in some extreme situations, the collision is unavoidable for our model, such as the too fast velocity and too short distance for particles. Especially when the particles are too many in a restricted region, the congestion is unavoidable.
\end{remark}
\subsection{Mean-field limit}\label{sec:Meanfield}
In this part, we extend the $N$-particles model~(\ref{eq:modelODE}) to its corresponding mean-field limit. Let us first introduce the so-called empirical distribution as in~\cite{parzani2017three} or~\cite{spohn2012large}, denoted by $f^N(t,x,v)$,  which is defined as follow:
\begin{equation*}
f^N(t,x,v) := \cfrac{1}{N} \sum_{i=1}^{N} \delta(x-x_i)\delta(v-v_i),   
\end{equation*}
where $(x_i,v_i)_{i=1}^N$ is the solution to the $N$-particles system~(\ref{eq:modelODE}) and {$\delta(\cdot)$ is a Dirac function.}

Then  as we did in the perception phase of the agent-based modelling, we introduce the ``vision cone'' $\mathcal{C}(v)$ (centered at the origin with angel $\arccos\kappa$ about the direction $v\in\RR^2$), the minimal distance $D(z,u)$ and the time to interaction $\tau(z,u)$ for continuous variables as follows%
\begin{align*}
\mathcal{C}(v) &:=\{z\in\RR^2|<z,v>\geq\kappa\|z\|\cdot\|v\|\}\subset\RR^2, \\
\displaystyle D(z,u) &:=(\|z\|^2-(\frac{<z,u>}{\|u\|})^2)^{1\over2}, \quad \forall u\in\RR^2,\\[3mm]
\displaystyle\tau(z,u)&:=-\frac{<z,u>}{\|u\|^2}, \quad \forall u\in\RR^2.\\[3mm]
\end{align*}
Thanks to the above definition, we define the sets $I^{Co}(u),I^{Im}(u),I^{Fo}(u)\subset\RR^2$ for the collision-interaction set, the imminent-interaction set and the following-interaction set respectively
\begin{align}
I^{Co}(u) &:=\{z\in\RR^2|\tau(z,u)\geq 0,D(z,u)<R\}, \quad\forall u\in\RR^2,\notag\\
I^{Im}(u)&:=\{z\in\RR^2|\tau(z,u)\geq0,D(z,u)<R^{Im}_0,\|z\|<R^{Im}\}, \quad\forall u\in\RR^2, \notag\\
I^{Fo}(u)&:= \{z\in\RR^2 | \tau(z,u)<0, \|z\|<R^{Fo}\}, \quad\forall u\in\RR^2, \notag
\end{align}
where the definition of $R$ and $R_0$ can be seen in Remark~\ref{remark:saferadius}, the definition of $R^{Im}, R^{Fo}$ can be seen in Definition~\ref{def:R^im} and~\ref{def:R^fo}. 
Finally, by taking intersection with the vision cone $\mathcal{C}(v)$, we obtain the sets $\mathcal{K}^{Co}(v,w),\mathcal{K}^{Im}(v,w), \mathcal{K}^{Fo}(v,w)$ as follows
\begin{gather}
\mathcal{K}^{Co}(v,w):=I^{Co}(w-v)\cap \mathcal{C}(v), \quad \forall v,w\in\RR^2,\notag\\
\mathcal{K}^{Im}(v,w) :=I^{Im}(w-v)\cap \mathcal{C}(v), \quad \forall v,w\in\RR^2,\notag\\
\mathcal{K}^{Fo}(v,w) := I^{Fo}(w-v)\cap \mathcal{C}(v), \quad \forall v,w\in\RR^2.\notag
\end{gather}

Now for the decision-making phase, thanks to the definition of the empirical distribution, we can write the collision-interaction force for continuous variable as
\begin{align*}
\mathcal{F}^{Co}(f^N) = \Omega_{Co}^N\cdot v^\bot,
\end{align*}
where we set $v^\bot$ as $v$ rotated counterclockwise with ${\pi/2}$ and
\begin{align}
\displaystyle\Omega_{Co}^N(t,x,v) &= \cfrac{1}{\lambda^{Co}(t,x,v)} \iint\limits_{\RR^2\times\RR^2} m^{Co}(y-x,v,w)\mathbbm{1}_{\mathcal{K}^{Co}(v,w)}(y-x)f^N(t,y,w)\rd y\rd w, \notag\\
\displaystyle\lambda^{Co}(t,x,v) &= \iint\limits_{\RR^2\times\RR^2} \mathbbm{1}_{\mathcal{K}^{Co}(v,w)}(y-x)f^N(t,y,w)\rd y\rd w+\beta, \notag\\
m^{Co}(z,v,w) &= -C_0 \cos^\varepsilon(\alpha(z,v))e^{-\tau(z,w-v)/C_1}\cdot g(\cfrac{(w-v)\times z}{\|z\|^2}), \notag\\
\cos^\varepsilon(\alpha(z,v)) &=\cos(\alpha(z,v))*\eta_\varepsilon,\notag\\
g(x) &= \cfrac{2}{1+e^{-x/\delta_0}}-1+\delta_1.\notag
\end{align}

Similarly, the imminent-interaction force is given  by
\begin{align*}
\mathcal{F}^{Im}(f^N) = -\Omega_{Im}^N\cdot v,
\end{align*}
with
\begin{align}
\displaystyle\Omega_{Im}^N(t,x,v) &= \cfrac{1}{\lambda^{Im}(t,x,v)}\iint\limits_{\RR^2\times\RR^2} m^{Im}(y-x,v,w)\mathbbm{1}_{\mathcal{K}^{Im}(v,w)}(y-x)f^N(t,y,w)\rd y\rd w, \notag\\
\displaystyle\lambda^{Im}(t,x,v) &= \iint\limits_{\RR^2\times\RR^2} \mathbbm{1}_{\mathcal{K}^{Im}(v,w)}(y-x)f^N(t,y,w)\rd y\rd w+\beta, \notag \\
m^{Im}(z,v,w) &= C_2 e^{-\|z\|\tilde\tau(z,w-v)/C_3}, \notag\\
\displaystyle\tilde\tau(z,u) &:= \tau(z,u) - \cfrac{\sqrt{\max(4R_0^2 - D(z,u)^2,0)}}{\|u\|^2}, \quad \forall u\in\RR^2. \notag
\end{align}
The following-interaction force is given by
\begin{align*}
\mathcal{F}^{Fo}(f^N) = \Omega_{Fo}^N\cdot v^\bot,
\end{align*}
with
\begin{align}
\displaystyle\Omega_{Fo}^N(t,x,v) &= \cfrac{1}{\lambda^{Fo}(t,x,v)}\iint\limits_{\RR^2\times\RR^2} m^{Fo}(y-x,v,w)\mathbbm{1}_{\mathcal{K}^{Fo}(v,w)}(y-x)f^N(t,y,w)\rd y\rd w, \notag\\
\displaystyle\lambda^{Fo}(t,x,v) &= \iint\limits_{\RR^2\times\RR^2} \mathbbm{1}_{\mathcal{K}^{Fo}}(v,w)(y-x)f^N(t,y,w)\rd y\rd w+\beta, \notag \\
m^{Fo}(z,v,w) &= C_4 e^{-\|\dot\alpha(z,w-v)\|\|z\|^2/C_5}\sin^\varepsilon(2\alpha(z,v)), \notag\\
\sin^\varepsilon(2\alpha(z,v)) &= \sin(2\alpha(z,v)) * \eta_\varepsilon,\notag\\
\dot\alpha(z,u) &= \cfrac{z\times u}{\|z\|^2}.\notag
\end{align}

Therefore, according to the conservation laws, the empirical distribution $f^N(t,x,v)$ satisfies the kinetic equation as follow:
\begin{equation*}
\partial_t f^N + v\cdot \nabla_x f^N - \nabla_x V\cdot \nabla_v f^N + \nabla_v \cdot ((\Omega_{Co}^N\cdot v^\bot-\Omega_{Im}^N\cdot v+\Omega_{Fo}^N\cdot v^\bot) f^N) = \sigma \nabla_v\cdot(vf^N).
\end{equation*}

Now by setting the volume radius of one particle $R_0=0$ and passing $N\to\infty$, we obtain the mean-field limit
\begin{equation}\label{eq:meanfield}
\displaystyle \partial_t f + v\cdot \nabla_x f + \nabla_v(\mathcal{F}(f)f)=0,
\end{equation}
where the force field is given by
\begin{equation*}
\mathcal{F}(f) = - \nabla_x V + (\Omega_{Co}\cdot v^\bot-\Omega_{Im}\cdot v+\Omega_{Fo}\cdot v^\bot) - \sigma v,
\end{equation*}
with
\begin{equation*}
\begin{cases}
\displaystyle\Omega_{Co}(t,x,v)=\cfrac{1}{\lambda^{Co}(t,x,v)} \iint\limits_{\RR^2\times\RR^2} m^{Co}(y-x,v,w)\mathbbm{1}_{\mathcal{K}^{Co}(v,w)}(z)f(t,y,w)\rd y\rd w,\\[3mm]
\displaystyle\Omega_{Im}(t,x,v) = \cfrac{1}{\lambda^{Im}(t,x,v)}\iint\limits_{\RR^2\times\RR^2} m^{Im}(y-x,v,w)\mathbbm{1}_{\mathcal{K}^{Im}(v,w)}(z)f(t,y,w)\rd y\rd w, \\[3mm]
\displaystyle\Omega_{Fo}(t,x,v)=\cfrac{1}{\lambda^{Fo}(t,x,v)} \iint\limits_{\RR^2\times\RR^2} m^{Fo}(y-x,v,w)\mathbbm{1}_{\mathcal{K}^{Fo}(v,w)}(z)f(t,y,w)\rd y\rd w,\\[3mm]
\displaystyle\lambda^{Co}(t,x,v) = \iint\limits_{\RR^2\times\RR^2} \mathbbm{1}_{\mathcal{K}^{Co}(v,w)}(y-x)f(t,y,w)\rd z\rd w,\\[3mm]
\displaystyle\lambda^{Im}(t,x,v) = \iint\limits_{\RR^2\times\RR^2} \mathbbm{1}_{\mathcal{K}^{Im}(v,w)}(y-x)f(t,y,v_j)\rd y\rd w, \\[3mm]
\displaystyle\lambda^{Fo}(t,x,v) = \iint\limits_{\RR^2\times\RR^2} \mathbbm{1}_{\mathcal{K}^{Fo}(v,w)}(y-x)f(t,y,w)\rd y\rd w,\\[3mm]
f(t=0)=f_0\in L^1\cap L^\infty (\RR^4),\\[3mm]
\iint\limits_{\RR^2\times\RR^2} f_0(x,v)\rd x\rd v = 1,
\end{cases}
\end{equation*}

and $V(x)$ is a regular potential function. For the equation~(\ref{eq:meanfield}), we can prove the existence of the weak solution in sense of \cite{carrillo2018mean} analogous to Theorem~3.1 stated in \cite{parzani2017three}.
\begin{theorem}\label{th:energy}
Consider a smooth potential $V(x)\geq0$ and $V\in C^1(\RR^2)$. Assume that $f_0\in L^1\cap~L^\infty(\RR^2\times\RR^2)$, with $f_0\geq0$ and
\begin{equation*}
\iint\limits_{\RR^2\times\RR^2} (|x|^2+|v|^2)f_0(x,v)\rd x\rd v<\infty.
\end{equation*}
Then for any $T>0$, there exists a weak solution to the equation~(\ref{eq:meanfield}) in sense of \cite{carrillo2018mean} such that
\begin{equation*}
f(t)\in L^1\cap L^\infty(\RR^2\times\RR^2),\quad \iint\limits_{\RR^2\times\RR^2} (|x|^2+|v|^2)f(t,x,v)\rd x\rd v<C(T,f_0)\quad a.e. \ t\in[0,T],
\end{equation*}
and for any $\varphi\in C_c^\infty([0,T)\times\RR^2\times\RR^2)$,
\begin{equation}\label{eq:weaksolution}
\begin{split}
\int_0^T\iint\limits_{\RR^2\times\RR^2} f(t)(\partial_t\varphi+v\cdot\nabla_x\varphi-(\nabla_xV-\Omega_{Co}\cdot v^\bot +\Omega_{Im}\cdot v-\Omega_{Fo}\cdot v^\bot\\
+\sigma v)\cdot \nabla_v\varphi)\rd x\rd v\rd t+\iint\limits_{\RR^2\times\RR^2}f_0\varphi(0)\rd x\rd v=0. 
\end{split}
\end{equation}
In addition, the system's energy satisfies the inequality below,
\begin{equation*}
{\rd\ \over\rd t}\iint\limits_{\RR^2\times\RR^2}\left(\cfrac{|v|^2}{2}+V(x)\right)f(t,x,v)\rd x\rd v\leq -\iint\limits_{\RR^2\times\RR^2}(\sigma+\Omega_{Im})|v|^2f(t,x,v)\rd x\rd v.
\end{equation*}
\end{theorem}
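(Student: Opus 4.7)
The proof follows the strategy of Theorem~3.1 in \cite{parzani2017three}: regularize, build approximate solutions with uniform bounds, pass to the limit, and then derive the energy estimate from the equation itself. The crucial preliminary observation is that, for any non-negative $f$ with $\iint f \rd x\rd v = 1$, the force field
\begin{equation*}
\mathcal{F}(f) = -\nabla V + (\Omega_{Co}+\Omega_{Fo})\,v^\bot - \Omega_{Im}\,v - \sigma v
\end{equation*}
is controlled by $|\nabla V(x)| + C(1+|v|)$, with $C$ independent of $f$. Indeed each denominator $\lambda^*(t,x,v) \geq \beta > 0$, while the numerators involve only the bounded kernels $\cos^\varepsilon$, $\sin^\varepsilon$, the sigmoid $g$, and exponentials in $[0,1]$ (or $[0,C_2]$ for $m^{Im}$). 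The mollification by $\eta_\varepsilon$ moreover makes $(v,w)\mapsto m^{Co}, m^{Fo}, m^{Im}$ smooth, so $\mathcal{F}(f)$ is $C^1$ in $v$ with $v$-derivative bounds depending only on $\varepsilon$ and $\|f\|_{L^1}$.

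Using this, I would set up a Picard iteration: starting from $f^{(0)}=f_0$, solve the linear transport equation
\begin{equation*}
\partial_t f^{(n+1)} + v\cdot\nabla_x f^{(n+1)} + \nabla_v\cdot\bigl(\mathcal{F}(f^{(n)})\,f^{(n+1)}\bigr) = 0, \qquad f^{(n+1)}(0) = f_0,
\end{equation*}
by characteristics. The drift is globally Lipschitz in $v$ and $C^1$ in $x$, so the flow is well defined and $f^{(n+1)}$ is transported by it with a Jacobian correction. This yields mass conservation $\|f^{(n+1)}(t)\|_{L^1}=1$, the $L^\infty$ bound $\|f^{(n+1)}(t)\|_\infty\le\|f_0\|_\infty\exp(\int_0^t\|\nabla_v\cdot\mathcal{F}(f^{(n)})\|_\infty\rd s)$ via Grönwall, and a linearly growing control of the second moment $\iint(|x|^2+|v|^2)f^{(n+1)}\rd x\rd v$ by testing the equation against $|x|^2+|v|^2$ and using $|\mathcal{F}(f^{(n)})|\le|\nabla V|+C(1+|v|)$.

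The main obstacle is the passage to the limit $n\to\infty$ in the nonlinear term $\mathcal{F}(f^{(n)})\,f^{(n+1)}$. The uniform $L^\infty(0,T;L^1\cap L^\infty)$ bound and the moment control produce a weak-$\ast$ limit $f$; equicontinuity in time comes from the equation, which bounds $\partial_t f^{(n+1)}$ in a negative Sobolev norm. The difficulty is that the sets $\mathcal{K}^{Co}, \mathcal{K}^{Im}, \mathcal{K}^{Fo}$ appear through discontinuous indicators, so $f\mapsto\mathcal{F}(f)$ is only continuous for narrow convergences whose limit puts no mass on the boundaries $\partial\mathcal{K}^*$. I would handle this as in \cite{parzani2017three}: either mollify the indicators by an extra parameter $\eta$, obtain existence for the doubly regularized problem, then let $\eta\to 0$; or argue directly that the boundaries $\partial\mathcal{K}^*$ are smooth hypersurfaces of Lebesgue measure zero, so, because $f(t,\cdot,\cdot)\in L^\infty$, the dominated convergence theorem applies pointwise in $(t,x,v)$. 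The resulting $f$ then satisfies the weak formulation \eqref{eq:weaksolution} in the sense of \cite{carrillo2018mean}.

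For the energy estimate, I would argue first on the smooth approximations and then pass to the limit. Multiplying \eqref{eq:meanfield} by $|v|^2/2 + V(x)$ and integrating by parts, the transport term $v\cdot\nabla_x f$ coupled with $V$ produces $\iint v\cdot\nabla V\,f\rd x\rd v$, while the velocity-divergence term coupled with $|v|^2/2$ produces $-\iint v\cdot\mathcal{F}(f)\,f\rd x\rd v$. Because $v\cdot v^\bot = 0$, the rotational contributions $\Omega_{Co}\,v^\bot$ and $\Omega_{Fo}\,v^\bot$ drop out, the potential parts cancel, and only $-\iint(\sigma+\Omega_{Im})|v|^2 f\rd x\rd v$ survives, giving exactly the stated inequality. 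All integrations by parts are legitimate thanks to $V\in C^1$ and the $L^1\cap L^\infty$ integrability of $f$ together with its second-moment control.
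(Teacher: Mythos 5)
The paper does not actually prove Theorem~\ref{th:energy}: it asserts the result ``analogous to Theorem~3.1 stated in \cite{parzani2017three}'' and moves on, reserving its only detailed analysis for the Lipschitz property of the force field (Theorem~\ref{th:lip}, proved in the appendix). Your proposal therefore supplies an argument where the paper supplies a citation, and the architecture you choose --- a priori bound on the force from $\lambda^{*}\geq\beta$ and the bounded kernels, linearized Picard iteration solved by characteristics, uniform $L^1$/$L^\infty$/second-moment bounds, compactness plus a measure-zero-boundary argument to pass to the limit in the nonlinear term, and a formal energy computation in which $v\cdot v^\bot=0$ kills the rotational forces --- is exactly the strategy of the cited reference and is consistent with the estimates the paper does prove (in particular Lemma~\ref{le:pkco} and Lemma~\ref{le:lambda}, which justify your claim that the boundaries of $\tilde{\mathcal{K}}^{Co}$, $\tilde{\mathcal{K}}^{Im}$, $\tilde{\mathcal{K}}^{Fo}$ have measure zero so that weak-$\ast$ limits in $L^\infty$ pass through the indicators). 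Your energy computation is correct and in fact yields equality at the formal level, which relaxes to the stated inequality after the limiting procedure.

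One step is justified by the wrong mechanism and should be repaired. You claim that the mollification by $\eta_\varepsilon$ makes $\mathcal{F}(f)$ ``$C^1$ in $v$''; it does not, because $\eta_\varepsilon$ only regularizes the factors $\cos(\alpha(z,v))$ and $\sin(2\alpha(z,v))$, while the indicator functions $\mathbbm{1}_{\mathcal{K}^{*}(v,w)}(y-x)$ --- whose sets rotate with $v$ and depend on $w-v$ --- remain discontinuous in $v$. Consequently $\nabla_v\cdot\mathcal{F}(f^{(n)})$ does not exist classically, and your Gr\"onwall bound $\|f^{(n+1)}(t)\|_\infty\le\|f_0\|_\infty\exp(\int_0^t\|\nabla_v\cdot\mathcal{F}(f^{(n)})\|_\infty\,\rd s)$ is not available as written. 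The correct source of regularity is the one the appendix exploits: because $f^{(n)}\in L^\infty$ and the symmetric differences of the interaction sets have measure $O(\|\Delta x\|+\|\Delta v\|)$, the fields $\Omega_{*}$ are Lipschitz in $(x,v)$, hence differentiable a.e.\ with bounded gradient by Rademacher, and the $L^\infty$ propagation must be run with this a.e.\ divergence (or, equivalently, after the extra mollification of the indicators that you yourself propose as a fallback for the limit passage, removed at the end). Note also that Theorem~\ref{th:lip} assumes compactly supported data while Theorem~\ref{th:energy} does not, so the iteration should either be performed on truncated data or rely only on the global bound $|\mathcal{F}(f)|\le|\nabla V|+C(1+\|v\|)$, which, as you observe, holds with $C$ independent of $f$ since $|\Omega_{*}|$ is bounded by the kernel bounds regardless of $\|f\|_{L^1}$.
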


Moreover, the flow generated from~\eqref{eq:meanfield} is well defined thanks to the regularity of the force field $F(f)$, stated in Theorem~\ref{th:lip}.

\begin{theorem}\label{th:lip}
Consider $\nabla_x V(x)$ to be Lipschitz continuous. Let $f$ be a weak solution to (\ref{eq:weaksolution}) given by Theorem~\ref{th:energy}. Assume that $f_0\in L^1\cap~L^\infty(\RR^2\times\RR^2)$, with $f_0\geq0$ and $f_0$ has compactly support in phase space.
Then the force field $F(f)$ generated by $f$ is locally Lipschitz continuous in phase space uniformly on $[0, T]$. More precisely, there exists a constant $C>0$ depending on $\|f\|_{L^1\cap L^\infty}$ and the support of $f$ in phase space, such that
\begin{equation*}
|\mathcal{F}(f)(t,x,v)-\mathcal{F}(f)(t,\tilde x,\tilde v)|\leq C(1+\|v\|)\left\|\begin{pmatrix}x\\v\end{pmatrix}-\begin{pmatrix}\tilde x\\ \tilde v\end{pmatrix}\right\|,
\end{equation*}
for all $x,v,\tilde x,\tilde v\in\RR^2$ and $t\in[0,T]$.
\end{theorem}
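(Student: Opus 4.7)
The plan is to split $\mathcal{F}(f)$ into its five constituent pieces and handle each separately. The potential force $-\nabla_xV$ is Lipschitz by assumption, and the friction term $-\sigma v$ is globally Lipschitz in $v$ with constant $\sigma$. The three interaction terms have the form $\Omega_\star(t,x,v)\,v^\bot$ or $\Omega_\star(t,x,v)\,v$ with $\star\in\{Co,Im,Fo\}$; using the split
\begin{equation*}
v^\bot\Omega_\star(t,x,v) - \tilde v^\bot\Omega_\star(t,\tilde x,\tilde v) = (v^\bot-\tilde v^\bot)\Omega_\star(t,x,v) + \tilde v^\bot\bigl(\Omega_\star(t,x,v) - \Omega_\star(t,\tilde x,\tilde v)\bigr),
\end{equation*}
the factor $(1+\|v\|)$ appearing on the right-hand side of the claim falls out as soon as each scalar $\Omega_\star$ is shown to be bounded and Lipschitz in $(x,v)$ uniformly in $t\in[0,T]$.

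For boundedness, the uniform lower bound $\lambda^\star\geq\beta>0$ and direct inspection of the kernels give $|\Omega_\star|\leq\beta^{-1}\|m^\star\|_\infty\|f(t)\|_{L^1}$, with $m^{Co},m^{Im},m^{Fo}$ all manifestly bounded since the mollified factors $\cos^\varepsilon,\sin^\varepsilon$ lie in $[-1,1]$ and the exponentials and sigmoid $g$ are bounded. For Lipschitz regularity, writing schematically
\begin{equation*}
\Omega_\star(t,x,v) - \Omega_\star(t,\tilde x,\tilde v) = \frac{N^\star(t,x,v) - N^\star(t,\tilde x,\tilde v)}{\lambda^\star(t,x,v)} + N^\star(t,\tilde x,\tilde v)\left(\frac{1}{\lambda^\star(t,x,v)}-\frac{1}{\lambda^\star(t,\tilde x,\tilde v)}\right),
\end{equation*}
where $N^\star$ denotes the numerator integral, and using $|1/\lambda^\star - 1/\tilde\lambda^\star|\leq\beta^{-2}|\lambda^\star-\tilde\lambda^\star|$, reduces everything to showing that $N^\star$ and $\lambda^\star$ are themselves Lipschitz in $(x,v)$ with constants controlled by $\|f\|_{L^1\cap L^\infty}$ and the support of $f$.

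The integrands $m^\star(y-x,v,w)\mathbbm{1}_{\mathcal{K}^\star(v,w)}(y-x)f(t,y,w)$ factor into a smooth kernel and an indicator. For $\varepsilon>0$ each kernel $m^\star$ is $C^\infty$ away from the degenerate loci $y=x$ and $w=v$, with gradient bounds uniform on the compact phase-space support of $f(t)$ (preserved in time by the characteristic flow generated by $\mathcal{F}(f)$); this gives the kernel contribution to the Lipschitz estimate. The main difficulty is the indicator, which is not continuous. However, each set $\mathcal{K}^\star(v,w)$ is defined by a handful of smooth scalar inequalities, $\langle y-x,v\rangle\geq\kappa\|y-x\|\|v\|$, a sign condition on $\tau$, $D<R$ (or $D<R^{Im}_0$), and $\|y-x\|<R^\star$, whose boundaries are $C^\infty$ hypersurfaces off a negligible set. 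Consequently the symmetric difference of the integration regions for $(x,v)$ and $(\tilde x,\tilde v)$ is contained in an $O(\|(x,v)-(\tilde x,\tilde v)\|)$-tube around those hypersurfaces, and integrating $f\in L^\infty$ restricted to its compact support against such a tube yields the desired $O(\|(x,v)-(\tilde x,\tilde v)\|)$ bound.

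The main obstacle is precisely this indicator-discontinuity estimate: one needs the boundaries of the $\mathcal{K}^\star$ to be regular enough to admit a tube estimate, and the compact support of $f$ to keep the integration well away from the degenerate loci $w=v$ or $y=x$ where $\tau$, $D$, $\tilde\tau$, and $\dot\alpha$ become singular. Once the tube estimate is combined with the kernel gradient bound, the uniform lower bound $\lambda^\star\geq\beta$, and the outer factor $\|v\|$ coming out of $v\Omega_\star$ or $v^\bot\Omega_\star$, one assembles the claim with constant $C$ depending only on $\|f\|_{L^1\cap L^\infty}$ and the diameter of the support of $f$ in phase space, as required.
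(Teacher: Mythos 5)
Your overall architecture --- splitting $\mathcal{F}(f)$ into its constituent pieces, bounding each $\Omega_\star$ via $\lambda^\star\geq\beta$, a quotient-rule decomposition into a numerator difference plus a $\lambda^\star$ difference, Lipschitz kernels on the common integration region, and a boundary-tube estimate for the symmetric difference of the indicator sets --- is exactly the paper's: its lemma on the $\epsilon$-boundary of $\tilde{\mathcal{K}}^{Co}(x,v)$ and its lemma estimating $|\lambda^{Co}(t,x,v)-\lambda^{Co}(t,\tilde x,\tilde v)|$ are precisely your tube estimate made quantitative. However, one concrete step fails as you state it: the claim that each scalar $\Omega_\star$ is Lipschitz in $(x,v)$ \emph{uniformly}, equivalently that the symmetric difference of the integration regions is contained in an $O(\|(x,v)-(\tilde x,\tilde v)\|)$-tube. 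The vision cone $\mathcal{C}(v)$ depends on $v$ only through its direction, so replacing $v$ by $\tilde v$ rotates the cone by an angle of order $\|v-\tilde v\|/\|v\|$; restricted to the support radius $r$ of $f$, the symmetric difference $\mathcal{C}(v)\Delta\mathcal{C}(\tilde v)$ then has measure of order $r^2\|v-\tilde v\|/\|v\|$, which is \emph{not} $O(\|v-\tilde v\|)$ with a uniform constant --- it degenerates as $\|v\|\to 0$. Consequently $\lambda^\star$ and $\Omega_\star$ are only Lipschitz with constant of the form $C(1+1/\|v\|)$, which is exactly what the paper's lemma records.

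The proof still closes, but only through a cancellation you do not invoke: the degenerate factor $1/\|v\|$ in the Lipschitz constant of $\Omega_\star$ always appears multiplied by the prefactor $\|v\|$ coming from $v^\bot$ (or $v$) in $\mathcal{F}^\star=\Omega_\star\, v^\bot$, and $\|v\|\cdot(1+1/\|v\|)=1+\|v\|$ is precisely the origin of the $(1+\|v\|)$ in the statement. Your product-rule split attributes that factor solely to the term $(v^\bot-\tilde v^\bot)\Omega_\star$ and controls the other term by a uniform Lipschitz bound on $\Omega_\star$ that is false near $v=0$; to repair the argument you must carry the $1/\|v\|$ through the cone estimate and pair it with the outer $\|v\|$, as the paper does. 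A secondary remark: justifying the persistence of compact support of $f(t)$ by appeal to ``the characteristic flow generated by $\mathcal{F}(f)$'' is circular here, since the Lipschitz property being proved is what makes that flow well defined; the paper simply takes the support bound on $f$ as part of the standing hypotheses and lets the constant depend on it.
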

The proof of this theorem is reported in the appendix.

\section{An efficient algorithm}\label{sec:Algorithm}
According to Section 2, one particle only needs to interact with particles in its ``vision cone''. However, to determine particles in the "vision cone", we have to compute all $N(N-1)/2$ relative relationship between particles, therefore the computational complexity is of order $\mathcal{O}(N^2)$. The computational cost is huge when the number of particles is large. In this section, we propose an efficient algorithm to solve the model in Section~\ref{sec:Modeling} by taking inspiration from the Random Batch Method (RBM) in \cite{jin2020random}. Moreover, considering the collision-avoidance mechanism, we propose a hybrid resolution strategy by combining the so-called Cell-List approach \cite{jin2022mean} with RBM.

\subsection{The RBM model for collision avoidance}
Since the computational complexity of (\ref{eq:modelODE}) is of order $\mathcal{O}(N^2)$, it costs too much to solve these ODEs when the number of particles $N$ is very large. To reduce the computational complexity, one may consider a mean-field approach, where we transfer the model to a Fokker-Planck equation (see equation~(\ref{eq:meanfield})) from a mesoscopic view~\cite{lasry2007mean}. The influence on the dynamics of a single particle is replaced by an averaged one. But it is still hard to solve the mean-field limit model with fine resolution. Therefore, it is meaningful to construct a simple method to solve the model efficiently. RBM uses the idea of subsampling when we computes interactions between all particles randomly to approximate the exact solution, where the computational complexity of the model can be reduced to $\mathcal{O}(N)$. Our aim is to use the idea of RBM to solve the collision-avoidance model and introduce its corresponding mean-field limit.

The acceleration of the particle $i$ consists of four parts: the collision-interaction force $F^{Co}_i$, the imminent-interaction force $F^{Im}_i$, the following-interaction force $F^{Fo}_i$ and the exit-interaction force $F^{Ex}_i$. From (\ref{eq:modelODE}), we know that the computational complexity of the first three forces $F^{Co}_i,F^{Im}_i,F^{Fo}_i$ for each particle $i$ is of order $\mathcal{O}(N^2)$, since the particle $i$ needs to interact with all other $N-1$ particles. Instead, the idea of RBM is to divide all particles into many small batches (only $p$ particles in each batch and $p$ is prefixed, small and independent of $N$), and only to compute interactions of particles within the same batch, therefore the computational complexity of these three forces is reduced to $\mathcal{O}(pN)$.

Now, let us describe the algorithm with RBM. First, we introduce a partition of time interval $[0,T]$ as $\{t_0,t_1,\cdots,t_m\}$.  In each subinterval $[t_{k-1},t_{k})$, the $N$ particles $((n-1)p<N\leq np)$ are divided into $n$ small batches with size at most $p$ ($p\ll N$, often $p=2$) randomly, denoted by $C_q,q=1,\ldots,n$. In perception phase, each particle only interacts within the batch where it belongs during $t\in[t_{k-1},t_{k})$.

For any particle $i\in\{1,\ldots,N\}$, there exists  $q\in\{1,\ldots,n\}$, such that $i$ belongs to the batch $C_{q}$. In perception phase, the three interaction sets in RBM can be determined as follows:

\begin{align*}
\mathcal{K}^{Co,RBM}_i &:= \left\{j\neq i:j\in\mathcal{K}^{Co}_i(t),j\in C_{q}\right\} = \mathcal{K}^{Co}_i(t)\cap C_{q},\\
\mathcal{K}^{Im,RBM}_i &:=  \mathcal{K}^{Im}_i(t)\cap C_{q},\\
\mathcal{K}^{Fo,RBM}_i &:= \mathcal{K}^{Fo}_i(t)\cap C_{q},
\end{align*}
where the definitions of $\mathcal{K}^{Co}_i,\mathcal{K}^{Im}_i,\mathcal{K}^{Fo}_i$ can be referred in~\eqref{eq:KCo},\eqref{eq:KIm},\eqref{eq:KFo} respectively.

Similarly, in decision-making phase, the corresponding three interaction forces can be determined as follows:
\begin{align*}
F^{Co,RBM}_i(t) &= \cfrac{1}{\overline{\mathcal{K}^{Co,RBM}_i}+\beta}\sum_{j\in C_{q},j\neq i} \omega^{Co}_{ij} \mathbbm{1}_{\mathcal{K}^{Co,RBM}_i}(j) v^\bot_i, \\
F^{Im,RBM}_i(t) &=-\cfrac{1}{\overline{\mathcal{K}^{Im,RBM}_i}+\beta}\sum_{j\in C_{q},j\neq i} \omega^{Im}_{ij} \mathbbm{1}_{\mathcal{K}^{Im,RBM}_i}(j) v_i, \\
F^{Fo,RBM}_i(t) &=\cfrac{1}{\overline{\mathcal{K}^{Fo,RBM}_i}+\beta}\sum_{j\in C_{q},j\neq i} \omega^{Fo}_{ij} \mathbbm{1}_{\mathcal{K}^{Fo,RBM}_i}(j) v^\bot_i.
\end{align*}


Thus, in each time step $[t_{k-1},t_{k})$, it contains two steps:
\begin{enumerate}
\item[(i)] Divide particles into $n$ batches randomly;
\item[(ii)] Evolve interactions only within batches. 
\end{enumerate}
The RBM model for collision avoidance obeys the rules as follows:
\begin{equation}\label{eq:RBM}
\begin{cases}
\cfrac{\rd x_i}{\rd t} = v_i, \\[3mm]
\cfrac{\rd v_i}{\rd t} = F^{RBM}_i(t):=F^{Co,RBM}_i(t)+F^{Im,RBM}_i(t)+F^{Fo,RBM}_i(t)+F^{Ex}_i(t),
\end{cases}\quad i\in\{1,\ldots,N\}.
\end{equation}

Clearly, with RBM each particle only needs to interact with $p-1$ particles in each time step, therefore the computational complexity is reduced from $\mathcal{O}(N^2)$ to $\mathcal{O}(pN)$. The details are described in Algorithm~\ref{ta:RBM}.


\begin{algorithm}
\caption{\label{ta:RBM}the RBM model for collision avoidance}
\begin{algorithmic}[1]
\State Given a time partition $t_k,k\in\{0,1,\cdots,m\}$.
\For{$k$ in $1:m$}
\State Divide $\{1,2,\ldots,N\}$ into $n=N/p$ batches randomly.
\For{each batch $C_q$}
\State Update $(x_i,v_i)$($i\in C_q$) by solving the following ODEs with $t\in[t_{k-1},t_k)$,
\Statex
\thetag{\ref{eq:RBM}}$\hfill\begin{cases}
\displaystyle\cfrac{\rd x_i}{\rd t} = v_i, \\[3mm]
\displaystyle\cfrac{\rd v_i}{\rd t} = F^{RBM}_i(t).
\end{cases}\hfill\hfill$
\EndFor
\EndFor
\end{algorithmic}
\end{algorithm}

\subsection{Mean-field limit of the RBM model}
As pointed out in \cite{jin2022mean}, the RBM model can also be viewed as a new model for the collision-avoidance model, in which particles interact  randomly with particles in selected batch, rather than a numerical method. In this part, we investigate the mean-field limit of the RBM model. In a subinterval $[t_{k-1},t_k)$,  the probability of $p$ chosen particles in a batch being correlated will converge to $0$ as  $N\to\infty$. If we assume that all particles' distribution are identical and independent at $t=0$, the marginal distributions of $p$ chosen particles in the batch will also be identical because the particles are exchangeable. Hence, we focus on one fixed particle, $i=1$, to construct the mean-field limit of the collision-avoidance model with RBM.

Let the initial distribution of all particles be independent and identically distributed, {\it i.e.}
\begin{equation*}
(x_i(0),v_i(0))\sim \mu_0, \qquad i\in\{1,\ldots,N\},
\end{equation*}
where $\mu_0$ is an initial distribution function. Notice that when $N\to\infty$, randomly picked $p-1$ particles are identical and independent from the particle $1$ in each time step. The particles in this batch, including the particle $1$, follow the same distribution, and interact with each other until the end of this time step. In the next time step, we draw another $p-1$ particles to the particle $1$ to build a new batch. As a result, when $N\to\infty$, the mean-field limit of $N$-particles system is reduced to a $p$-particles system shown in the Algorithm~\ref{ta:mRBM}.

\begin{algorithm}
\caption{\label{ta:mRBM}Mean-field limit for the RBM model}
\begin{algorithmic}[1]
\State Given an initial distribution function $\tilde{\mu}(x,v,0)=\mu_0$ and a time partition $t_k,k=\{0,\cdots,m\}$.
\For{$k$ in $1:m$}
\State Let $\rho^{(p)}(\cdots,t_{k-1})=\tilde{\mu}(\cdot,\cdot,t_{k-1})^{\otimes p}$, a probability measure on $(\RR^2\times\RR^2)^{p}$, be initial data.
\State Evolve $\rho^{(p)}$ by solving the following Fokker-Planck equation with $t\in[t_{k-1},t_k)$.
\Statex \refstepcounter{equation}\thetag{\theequation}\label{eq:RBMmeanfield}$\hfill
\displaystyle\partial_t \rho^{(p)}+\sum_{i=1}^{p}v_i\cdot \nabla_{x_i}\rho^{(p)}-\sum_{i=1}^{p}\nabla_{x_i}V(x_i) \cdot\nabla_{v_i} \rho^{(p)}\hfill\hfill\hfill$
\Statex $\hfill\displaystyle+\sum_{i=1}^{p}\nabla_{v_i}\cdot\left((\Omega^{Co}_i\cdot v^\bot_i-\Omega^{Im}_i\cdot v_i+\Omega^{Fo}_i\cdot v^\bot_i)\rho^{(p)}\right) =\sigma \sum_{i=1}^{p} \nabla_{v_i}\cdot(v_i\rho^{(p)}).$
\State Set
\Statex $\hfill\displaystyle
\tilde{\mu}(x,v,t_{k}):=\iint\limits_{\RR^{2}\times\RR^2}\cdots\iint\limits_{\RR^{2}\times\RR^2} \rho^{(p)}(x,x_2,\cdots,x_p,v,v_2,\cdots,v_p,t^-_{k})\rd x_2\rd v_2\ldots\rd x_p\rd v_p.\hfill$
\EndFor
\end{algorithmic}
\end{algorithm}
The magnitude of the forces in the equation~\eqref{eq:RBMmeanfield} are defined as follows:
\begin{gather}
\Omega^{Co}_i(\{(x_j,v_j)\}_{j=1}^{p}) := \cfrac{1}{\overline{\mathcal{K}^{Co}_i}+\beta}\sum^{p}_{j=1,j\neq i} \omega^{Co}_{ij} \mathbbm{1}_{\mathcal{K}^{Co}_i}(j)\notag,\\
\Omega^{Im}_i(\{(x_j,v_j)\}_{j=1}^{p}) := -\cfrac{1}{\overline{\mathcal{K}^{Im}_i}+\beta}\sum^{p}_{j=1,j\neq i} \omega^{Im}_{ij} \mathbbm{1}_{\mathcal{K}^{Im}_i}(j)\notag,\\
\Omega^{Fo}_i(\{(x_j,v_j)\}_{j=1}^{p}) := \cfrac{1}{\overline{\mathcal{K}^{Fo}_i}+\beta}\sum^{p}_{j=1,j\neq i} \omega^{Fo}_{ij}\mathbbm{1}_{\mathcal{K}^{Fo}_i}(j)\notag.
\end{gather}

Different from previous section, in this part $\{(x_j,v_j)\}_{j=1}^{p}$ represent $p$ continuous variables in $\RR^2\times\RR^2$. The sets $\mathcal{K}^{Co}_i$, $\mathcal{K}^{Im}_i$, $\mathcal{K}^{Fo}_i$ and the functions $\omega^{Co}_{ij}$, $\omega^{Im}_{ij}$, $\omega^{Fo}_{ij}$ take the same definitions as in Section~\ref{sec:Modeling}, but localized only in a batch.

For the mean field model of the RBM model (\ref{eq:RBMmeanfield}), an analogous consequence of the weak solution as Theorem~\ref{th:energy} can be proved.
\begin{theorem}
Consider a smooth positive potential  $V(x)\in C^1(\RR^2)$. Assume that $\mu_0\in L^1\cap L^\infty(\RR^{2}\times\RR^2)$ with $\mu_0 \geq 0$ and
\begin{equation*}
\iint\limits_{\RR^{2}\times\RR^2}(|x|^2+|v|^2) \mu_0(x,v)\rd x\rd v < \infty.
\end{equation*}
Then for any $T>0$ and almost every time partition $t_k,k=\{0,\cdots,m\}$ in $[0,T]$, there exists a weak solution to equation~({\rm\ref{eq:RBMmeanfield}}) in sense of \cite{carrillo2018mean}, that means for any $\varphi\in C_c^\infty([0,T)\times\RR^2\times\RR^2)$,
\begin{multline*}
\begin{split}
\int_0^T\iint\limits_{\ \RR^2\times\RR^2}\cdots \iint\limits_{\ \RR^2\times\RR^2}\rho^{(p)}(\partial_t\varphi+\sum_{i=1}^{p}v_i\cdot \nabla_{x_i}\varphi\\
-\sum_{i=1}^{p}((\nabla_{x_i}V(x_i)-\Omega^{Co}_i\cdot v^\bot_i &+\Omega^{Im}_i\cdot v_i-\Omega^{Fo}_i \cdot v^\bot_i+\sigma v)\cdot \nabla_{v_i}\varphi)\rd x_1\rd v_1\ldots\rd x_p\rd v_p\rd t
\end{split}\\
+\iint\limits_{\ \RR^2\times\RR^2}\cdots \iint\limits_{\ \RR^2\times\RR^2}\rho^{(p)}(0)\varphi(0)\rd x_1\rd v_1\ldots\rd x_p\rd v_p=0.
\end{multline*}
And we have $\rho^{(p)}(t)\in L^1\cap L^\infty((\RR^2\times\RR^2)^p),$
\begin{equation*}
\iint\limits_{\RR^{2}\times\RR^2}\cdots\iint\limits_{\RR^{2}\times\RR^2} \sum_{i=1}^p(|x_i|^2+|v_i|^2)\rho^{(p)}(\cdots,t)\rd x_1\rd v_1\ldots\rd x_p\rd v_p<C(T,\mu_0)\quad a.e.\ t\in[0,T].
\end{equation*}
In addition, the system's energy satisfies
\begin{equation}\label{eq:RBMenergy}
\begin{split}
{\rd\over\rd t}\iint\limits_{\RR^2\times\RR^2}\left(\cfrac{|v|^2}{2}+V(x)\right)\tilde{\mu}(x,v,t)\rd x\rd v\leq -\sigma\iint\limits_{\RR^2\times\RR^2}|v|^2\tilde{\mu}(x,v,t)\rd x\rd v\\
-\iint\limits_{\ \RR^2\times\RR^2}\cdots \iint\limits_{\ \RR^2\times\RR^2} \Omega^{Im}_1 |v_1|^2\rho^{(p)}(\cdots,t)\rd x_1\rd v_1\ldots\rd x_p\rd v_p.
\end{split}
\end{equation}
Particularly, when $C_2=0$ in \eqref{eq:omegaim}, the inequality~(\ref{eq:RBMenergy}) can be written as
\begin{equation*}
{\rd\over\rd t}\iint\limits_{\RR^2\times\RR^2}\left(\cfrac{|v|^2}{2}+V(x)\right)\tilde{\mu}(x,v,t)\rd x\rd v\leq -\sigma\iint\limits_{\RR^2\times\RR^2}|v|^2\tilde{\mu}(x,v,t)\rd x\rd v.
\end{equation*}
\end{theorem}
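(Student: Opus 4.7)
The plan is to prove the theorem by iteratively applying Theorem~\ref{th:energy} to the Fokker-Planck equation~\eqref{eq:RBMmeanfield} on each subinterval $[t_{k-1},t_k)$, now posed on the $4p$-dimensional product phase space $(\RR^2\times\RR^2)^p$. The key observation is that the force field entering~\eqref{eq:RBMmeanfield} has exactly the same structure as the one in the original mean-field limit~\eqref{eq:meanfield}: each rotation frequency $\omega^{Co}_{ij},\omega^{Im}_{ij},\omega^{Fo}_{ij}$ is uniformly bounded, the denominators $\overline{\mathcal{K}^{\star}_i}+\beta$ are bounded below by $\beta>0$, and the sets $\mathcal{K}^{\star}_i$ are characterized by the same indicators in the relative variables. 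Therefore the hypotheses required by the weak-solution framework of \cite{carrillo2018mean} (boundedness and stability of the force with respect to $\rho^{(p)}$) are satisfied on the product space, so one obtains $\rho^{(p)}(t)\in L^1\cap L^\infty((\RR^2\times\RR^2)^p)$ on each subinterval, with propagation of second moments via Gronwall applied to $\varphi=\sum_i(|x_i|^2+|v_i|^2)$ as in the proof of Theorem~\ref{th:energy}.

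To iterate across the partition, the initial datum at $t_{k-1}$ is the tensor product $\tilde\mu(t_{k-1})^{\otimes p}$, which lies in $L^1\cap L^\infty$ as soon as $\tilde\mu(t_{k-1})$ does, with $\|\tilde\mu(t_{k-1})^{\otimes p}\|_{L^\infty}=\|\tilde\mu(t_{k-1})\|_{L^\infty}^p$; the marginal $\tilde\mu(t_k)$ at the end of each subinterval inherits the $L^1$ bound trivially, and the $L^\infty$ bound by first propagating the compact support of $\rho^{(p)}$ through the subinterval (using the boundedness of the force and the Lipschitz estimate of Theorem~\ref{th:lip} transposed to the product space) and then integrating the resulting $L^\infty$ bound of $\rho^{(p)}$ over a bounded set in the remaining $p-1$ variables; non-compactly supported $\mu_0$ is treated by density. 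For the energy inequality~\eqref{eq:RBMenergy}, I test~\eqref{eq:RBMmeanfield} against $\varphi(x_1,v_1)=|v_1|^2/2+V(x_1)$: since $\varphi$ depends only on the first particle's variables, all $i\geq 2$ contributions vanish after integration by parts; for $i=1$ the transport term $\int v_1\cdot\nabla V(x_1)\rho^{(p)}$ cancels the potential contribution $-\int\nabla V(x_1)\cdot v_1\rho^{(p)}$, the perpendicular rotational terms $\Omega^{Co}_1 v_1^\bot$ and $\Omega^{Fo}_1 v_1^\bot$ vanish because $v_1^\bot\cdot v_1=0$, and the imminent and friction terms give $-\int\Omega^{Im}_1|v_1|^2\rho^{(p)}$ and $-\sigma\int|v_1|^2\rho^{(p)}$ respectively; identifying $\int|v_1|^2\rho^{(p)}=\iint|v|^2\tilde\mu$ via the definition of the marginal yields the claimed estimate. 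The case $C_2=0$ forces $\omega^{Im}_{ij}\equiv 0$, hence $\Omega^{Im}_1\equiv 0$, and the simplified inequality follows.

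The main obstacle is the $L^\infty$ propagation of the marginal, since integrating an $L^\infty$ density over an unbounded set does not in general preserve $L^\infty$; this is precisely what forces the route through compact support of $\rho^{(p)}$ and the approximation argument for general $\mu_0$. The "almost every time partition" qualifier in the statement anticipates a measure-zero exceptional set of endpoints for which the Lipschitz constants or support bounds arising in Theorem~\ref{th:lip} could degenerate; outside this set the weak solution on $[0,T]$ is built by concatenating the subinterval solutions, and the energy inequality is obtained by summing the subinterval estimates, a lower-semicontinuity argument (as in the proof of Theorem~\ref{th:energy}) justifying the $\leq$ sign in~\eqref{eq:RBMenergy} rather than equality.
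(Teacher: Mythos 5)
Your proposal is correct and follows essentially the same route as the paper: iterate Theorem~\ref{th:energy} over the subintervals, taking the tensorized marginal $\tilde\mu(t_{k-1})^{\otimes p}$ as initial datum, choosing the partition points outside the null set where the a.e.-in-time bounds could fail, and then deriving~\eqref{eq:RBMenergy} by testing with the energy observable and reducing to the first marginal via exchangeability of $\rho^{(p)}$. The only substantive differences are cosmetic or in your favour: you test with the single-particle energy $|v_1|^2/2+V(x_1)$ instead of the symmetric sum divided by $p$ (equivalent by symmetry), and you explicitly address the $L^\infty$ propagation of the marginal through compact support and a density argument, a point the paper asserts without justification; note only that the paper's ``almost every time partition'' caveat comes from the a.e.-in-$t$ validity of the second-moment estimate in Theorem~\ref{th:energy}, not from a degeneration of the Lipschitz bounds of Theorem~\ref{th:lip}.
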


\begin{proof}
We will prove the existence of the weak solution by  induction. For this, let us first focus on subinterval $[0,t_1)$. Clearly, for the initial data, we have
\begin{equation*}
\begin{split}
&\iint\limits_{\ \RR^2\times\RR^2}\cdots \iint\limits_{\ \RR^2\times\RR^2} \sum_{i=1}^p(|x_i|^2+|v_i|^2)\rho^{(p)}(\cdots,0)\rd x_1\rd v_1\ldots\rd x_p\rd v_p \\
=& \sum_{i=1}^p \iint\limits_{\RR^{2}\times\RR^2} (|x_i|^2+|v_i|^2) \mu_0(x_i,v_i)\rd x_i \rd v_i
=p\iint\limits_{\RR^{2}\times\RR^2} (|x|^2+|v|^2) \mu_0(x,v)\rd x \rd v<\infty.
\end{split}
\end{equation*}
Thanks to Theorem~\ref{th:energy}, there exits a weak solution $\rho^{(p)}$ for (\ref{eq:RBMmeanfield}) such that the second moment of $\rho^{(p)}(\dots,t)$ is bounded for almost every $t\in[0,t_1)$. Here, we can choose time partition such that $t_1^-$, which denotes the endpoint of the interval $[0,t_1)$ and is distinguished from the start point of the interval $[t_1,t_2)$, is not in negligible set.  Immediately, we have boundedness of the second moment of $\rho^{(p)}(\dots,t_1)$ as follows
\begin{equation*}
\begin{split}
&\iint\limits_{\ \RR^2\times\RR^2}\cdots \iint\limits_{\ \RR^2\times\RR^2} \sum_{i=1}^p(|x_i|^2+|v_i|^2)\rho^{(p)}(\cdots,t_1)\rd x_1\rd v_1\ldots\rd x_p\rd v_p \\
=& \sum_{i=1}^p \iint\limits_{\RR^{2}\times\RR^2} (|x_i|^2+|v_i|^2) \tilde\mu(x_i,v_i,t_1)\rd x_i \rd v_i
=p\iint\limits_{\RR^{2}\times\RR^2} (|x|^2+|v|^2) \tilde\mu(x,v,t_1)\rd x \rd v\\
=&p\iint\limits_{\RR^{2}\times\RR^2} (|x|^2+|v|^2) \rd x\rd v\iint\limits_{\ \RR^2\times\RR^2}\cdots \iint\limits_{\ \RR^2\times\RR^2} \rho^{(p)}(x,x_2,\cdots,x_p,v,v_2,\cdots,v_p,t^-_{1})\rd x_2\rd v_2\ldots\rd x_p\rd v_p\\
\leq&p\iint\limits_{\ \RR^2\times\RR^2}\cdots \iint\limits_{\ \RR^2\times\RR^2}\sum_{i=1}^p(|x_i|^2+|v_i|^2)\rho^{(p)}(\cdots,t_1^-)\rd x_1\rd v_1\ldots\rd x_p\rd v_p 
\leq pC(t_1,\mu_0),
\end{split}
\end{equation*}
and $\rho^{(p)}(t_1)\in L^1\cap L^\infty((\RR^{2}\times\RR^2)^p)$. Therefore, by applying again Theorem~\ref{th:energy} there exists a weak solution $\rho^{(p)}$ for $t\in[t_1,t_2)$. By induction, for any $T>0$ and for almost every time partition, there exists a weak solution for (\ref{eq:RBMmeanfield}) in sense of \cite{carrillo2018mean}. 

Next, we prove the energy inequality~(\ref{eq:RBMenergy}). We multiple both sides of~(\ref{eq:RBMmeanfield}) by $\sum_{i=1}^p ({1\over 2}|v_i|^2+V(x_i))$, then integrate it over $(x_1,v_1,\ldots,x_p,v_p)\in(\RR^{2}\times\RR^2)^p$. Taking an integration by part, we get
\begin{multline*}
{\rd\over\rd t}\iint\limits_{\ \RR^2\times\RR^2}\cdots \iint\limits_{\ \RR^2\times\RR^2}\sum_{i=1}^p\left(\cfrac{|v_i|^2}{2}+V(x_i)\right)\rho^{(p)}(\cdots,t)\rd x_1\rd v_1\ldots\rd x_p\rd v_p\\ 
\leq -\iint\limits_{\ \RR^2\times\RR^2}\cdots \iint\limits_{\ \RR^2\times\RR^2}\sum_{i=1}^p (\sigma+\Omega_i^{Im})|v_i|^2\rho^{(p)}(\cdots,t)\rd x_1\rd v_1\ldots\rd x_p\rd v_p.
\end{multline*}

On the one hand, since the distribution $\rho^{(p)}$ for $p$-tuple $\{(x_i,v_i)\}_{i=1}^p$ is symmetric , we have
\begin{equation*}
\begin{split}
&{\rd\over\rd t}\iint\limits_{\ \RR^2\times\RR^2}\cdots \iint\limits_{\ \RR^2\times\RR^2}\sum_{i=1}^p\left(\cfrac{|v_i|^2}{2}+V(x_i)\right)\rho^{(p)}(\cdots,t)\rd x_1\rd v_1\ldots\rd x_p\rd v_p\\
=&{\rd\over\rd t}\sum_{i=1}^p\iint\limits_{\RR^{2}\times\RR^2}\left(\cfrac{|v_i|^2}{2}+V(x_i)\right)\tilde{\mu}(x_i,v_i,t)\rd x_i\rd v_i
=p\cdot {\rd\over\rd t}\iint\limits_{\RR^{2}\times\RR^2}\left(\cfrac{|v|^2}{2}+V(x)\right)\tilde{\mu}(x,v,t)\rd x\rd v.
\end{split}
\end{equation*}
On the other hand, for the right-hand side, we have
\begin{equation*}
\begin{split}
&-\iint\limits_{\ \RR^2\times\RR^2}\cdots \iint\limits_{\ \RR^2\times\RR^2}\sum_{i=1}^p (\sigma+\Omega_i^{Im}) |v_i|^2\rho^{(p)}(\cdots,t)\rd x_1\rd v_1\ldots\rd x_p\rd v_p\\
=&-\sum_{i=1}^p\iint\limits_{\ \RR^2\times\RR^2}\cdots \iint\limits_{\ \RR^2\times\RR^2} (\sigma+\Omega_i^{Im}) |v_i|^2\rho^{(p)}(\cdots,t)\rd x_1\rd v_1\ldots\rd x_p\rd v_p\\
=&-p\sigma\iint\limits_{\RR^{2}\times\RR^2} |v|^2\tilde{\mu}(x,v,t)\rd x\rd v - p \iint\limits_{\ \RR^2\times\RR^2}\cdots \iint\limits_{\ \RR^2\times\RR^2} \Omega_1^{Im} |v_1|^2\rho^{(p)}(\cdots,t)\rd x_1\rd v_1\ldots\rd x_p\rd v_p.
\end{split}
\end{equation*}
Therefore, we achieve the proof. 

\end{proof}

\subsection{The hybrid model}
Though the RBM algorithm reduces the computational complexity from $\mathcal{O}(N^2)$ to $\mathcal{O}(N)$, it has some limitations since each particle can only interact with $p-1$ particles within the same batch in each time step.  For instance, it may happen that particles having high collision risk may not be assigned into one batch, especially when $N\gg p$. Some particles may neglect their surrounding particles since they are not in the same batch. Thus, they might collide or even overlap  other particles. To avoid these cases, a natural idea is to add  surrounding particles into the "vision cone" of the particle, therefore particle can predict all potential collisions. 

To this end, we combine a so-called Cell-List approach\cite{frenkel2001understanding}, where  the whole space is divided into cells of equal size. One particle in a given cell interacts with those particles in the same and neighboring cells and with those in the same batch (see Figure~\ref{fig:list}). We call the interaction within the same and neighboring cell for one particle the short-range force. Since each particle has a fixed volume, the number of particles in a cell has a upper bound. So the number of particles in a cell is independent of the system size $N$ when $N$ is large.

Here, we take the same time partition and batch division as before. In addition, the whole space is divided into cells with size $r_c\times r_c$, so each particle belongs to a cell.
We assume that the particle $i$, for $\{1,\ldots,N\}$, belongs to the batch $C_{q}$. Let us denote the set $\tilde{C}_i$ containing all the particles in the same cell or nearby cells for the particle $i$. Therefore, in one time step, the particle $i$ will only interact with those particles in the set $C_{q}\cup\tilde{C}_i$. 

In perception phase, the three interaction sets of the hybrid model are determined as follows:
\begin{gather}
\mathcal{K}^{Co,HM}_i := \mathcal{K}^{Co}_i(t) \cap (C_{q}\cup \tilde{C}_i),\notag\\
\mathcal{K}^{Im,HM}_i := \mathcal{K}^{Im}_i(t) \cap (C_{q}\cup \tilde{C}_i),\notag\\
\mathcal{K}^{Fo,HM}_i := \mathcal{K}^{Fo}_i(t) \cap (C_{q}\cup \tilde{C}_i),\notag
\end{gather}
where the definitions of $\mathcal{K}^{Co}_i,\mathcal{K}^{Im}_i,\mathcal{K}^{Fo}_i$ can be referred in~\eqref{eq:KCo}, \eqref{eq:KIm}, \eqref{eq:KFo} respectively, and HM stands for the hybrid model.

\begin{figure}
\centering
\includegraphics[width=0.3\linewidth]{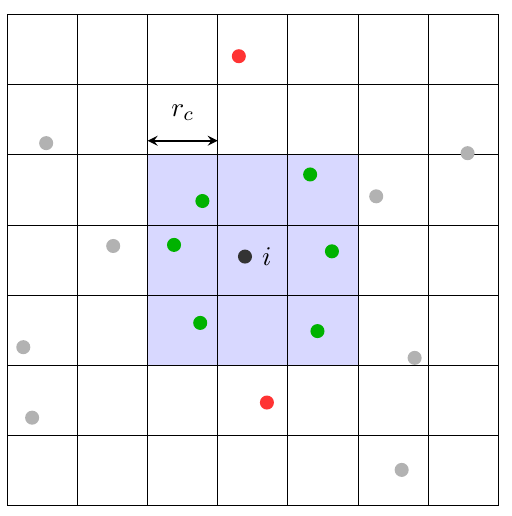}
%
%
\caption{\label{fig:list}Illustration of  particles interacting with the particle $i$. The space is divided into cells of size $r_c\times r_c$; the particle $i$ can interact with those particles (green particles) in the same and nearby cells (in purple cells) and with those particles (red particles) in the same batch in the hybrid model.}
\end{figure}

In decision-making phase, the corresponding three interaction forces are determined as follows:
\begin{align*}
F^{Co,HM}_i(t) &= \cfrac{1}{\overline{\mathcal{K}^{Co,HM}_i}+\beta}\sum_{j\in C_{q},j\neq i} \omega^{Co}_{ij} \mathbbm{1}_{\mathcal{K}^{Co,HM}_i}(j) v^\bot_i, \\
F^{Im,HM}_i(t) &=-\cfrac{1}{\overline{\mathcal{K}^{Im,HM}_i}+\beta}\sum_{j\in C_{q},j\neq i} \omega^{Im}_{ij} \mathbbm{1}_{\mathcal{K}^{Im,HM}_i}(j) v_i, \\
F^{Fo,HM}_i(t) &=\cfrac{1}{\overline{\mathcal{K}^{Fo,HM}_i}+\beta}\sum_{j\in C_{q},j\neq i} \omega^{Fo}_{ij} \mathbbm{1}_{\mathcal{K}^{Fo,HM}_i}(j) v^\bot_i.
\end{align*}

Thus, in one time step, the algorithm contains three steps:
\begin{enumerate}
\item[(i)] Divide particles into $n$ batches randomly;

\item[(ii)] Determine which cell one particle belonging to;

\item[(iii)]Evolve interactions within batches or within the same and nearby cells.
\end{enumerate}
The hybrid model for collision avoidance obeys the rules as follows: 
\begin{equation}\label{eq:ls}
\begin{cases}
\cfrac{\rd x_i}{\rd t} = v_i, \\[3mm]
\cfrac{\rd v_i}{\rd t} = F^{HM}_i(t):= F^{Co,HM}_i(t)+F^{Im,HM}_i(t)+F^{Fo,HM}_i(t) + F^{Ex}_i(t),
\end{cases}\qquad i\in\{1,\ldots,N\}.
\end{equation}

Since the number of particles in the cell has an upper bound and the batch size $p$ is independent of the system size $N$, the computational complexity of the hybrid model remains to be of order $\mathcal{O}(N)$ when $N\to\infty$. The detailed algorithm is shown in Algorithm~\ref{ta:hRBM}.

\begin{algorithm}
\caption{\label{ta:hRBM}The hybrid model for collision avoidance}
\begin{algorithmic}[1]
\State Given a time partition $t_k,k\in\{0,1,\cdots,m\}$.
\For{$k$ in $1:m$}
\State Determine which cell each particle belonging to.
\State Divide $\{1,2,\ldots,N\}$ into $n=N/p$ batches randomly.
\For{each batch $C_q$}
\For{each particle $i\in C_q$}
\State Compute the set $\tilde{C}_i$.
\State Update $(x_i,v_i)$ by solving the following ODEs with $t\in[t_{k-1},t_k)$.
\Statex\refstepcounter{equation}\thetag{\ref{eq:ls}}$\hfill\begin{cases}
\displaystyle\cfrac{\rd x_i}{\rd t} = v_i, \\[3mm]
\displaystyle\cfrac{\rd v_i}{\rd t} = F^{HM}_i(t).
\end{cases}\hfill\hfill$
\EndFor
\EndFor
\EndFor
\end{algorithmic}
\end{algorithm}

\section{Numerical results}\label{sec:Numericalresults}
\subsection{Setup}
The common value of parameters used in the numerical simulations are listed in the Table~\ref{ta:para}. These values are determined by fitting the model with a lot of experimental data. In the case where the value of $\varepsilon$ in the mollification operator $\eta_\varepsilon$ defined in (\ref{eq:H_ij}) and (\ref{eq:sinus}) is small enough, we can just substitute $\cos^\varepsilon(\alpha_{ij}), \sin^\varepsilon(2\alpha_{ij})$ by $\cos(\alpha_{ij}), \sin(2\alpha_{ij})$ in the numerical simulations.
\begin{table}
\caption{\label{ta:para}The value of parameters of the collision-avoidance model.}
\centering
\begin{tabular}{|c c c | c c c|}
\hline
Variable & Value & Reference & Variable & Value & Reference\\
\hline
$\kappa$ & $0.5$ & Definition~\ref{def:visioncone} & $R_0^{Im}$ & $1.0$ & Eq.(\ref{eq:KIm})\\
$R_0$ & $0.5$ & Remark~\ref{remark:saferadius} & $R^{Im}$ & $3.0$ & Eq.(\ref{eq:KIm})\\
$R$ & $3.0$ & Remark~\ref{remark:saferadius} & $C_2$ & $e$ & Eq.(\ref{eq:omegaim})\\
$C_0$ & $6\pi$ & Eq.(\ref{eq:omegaCo}) & $C_3$ & $1.0$ & Eq.(\ref{eq:omegaim})\\
$C_1$ & $2$ & Eq.(\ref{eq:omegaCo}) & $R^{Fo}$ & $3.0$ & Eq.(\ref{eq:KFo})\\
$\delta_0$ & $0.01$ & Eq.\eqref{eq:sign} & $C_4$ & $\pi$ & Eq.(\ref{eq:omegafo})\\
$\delta_1$ & $0.1$ & Eq.\eqref{eq:sign} & $C_5$ & $1.0$ & Eq.(\ref{eq:omegafo})\\
$\beta$ & $0.01$ & Eq.(\ref{eq:fco}) & $\sigma$ & $1.0$ & Eq.(\ref{eq:fex})\\
\hline
\end{tabular}
\vskip .cm
\end{table}
To lead each particle to its destination, the potential function $V(x)$ in (\ref{eq:fex}) is set as follows:
\begin{equation*}
V(x) = \|x-x_T\|.
\end{equation*}

For simplicity, we call (\ref{eq:modelODE}) \textbf{the original model} where each particle needs to interact with all others and the models in (\ref{eq:RBM}) and (\ref{eq:ls}) are called \textbf{the RBM model} and \textbf{the hybrid model} respectively. In addition, the model with interactions only in the cell list (the situation when the batchsize $p$ in the hybrid model (\ref{eq:ls}) is set as $1$) is called \textbf{the short-force model}. The main objective of our numerical simulations is to compare these models. We use the improved Euler's method as the numerical scheme, and denote the time step by $\Delta t$. The time to shuffle the batches for the RBM model or the hybrid model is set as $\{t_k:t_k=k\Delta t\}_{k=0}^\infty$. In order to reduce the error of energy in the simulations, the velocity $v_i$ in ODEs is illustrated in polar coordinate and a corresponding numerical scheme is constructed.

As illustrated in Remark~\ref{remark:saferadius}, a collision occurs when the distance between two particles is less than $2R_0$. It is assumed that the collision is an inelastic one with coefficient of elasticity $e_c=0.8$, i.e.
\begin{equation*}
e_c=\cfrac{\|v_i-v_j\|}{\|v_{i0}-v_{j0}\|}=0.8,
\end{equation*}
where $v_{i0},v_{j0}$ are the velocities projected to the line connecting the center of the particle $i$ and the particle $j$ before collision, while $v_{i},v_{j}$ are the ones after collision respectively(see Figure~\ref{fig:collision}).

To show the results clearly, each particle is set as a disk with radius $R_0$, and the trajectory during past $5$s is illustrated by semitransparent line. The velocity is also marked out as a thin line with one side located at the position of each particle. The sketch can be seen in Figure~\ref{fig:particle}.

\begin{figure}
\centering
\begin{tabular}{c c c}
\begin{tikzpicture}[
par/.style={circle,inner sep=0,minimum size=8mm,fill=black!60,draw=black,very thick},
ppar/.style={par,fill=white},>=stealth,shorten >=1pt]
\node (i) at (0,0) [ppar]{};
\node (j) at (1.,0) [par]{};
\draw[->, thick] ($(i)+(-.5,.6)$)--node[above]{$v_{i0}$}($(i)+(.5,.6)$);
\draw[->, thick] ($(j)+(-.3,.6)$)--node[above]{$v_{j0}$}($(j)+(.3,.6)$);
\draw[dashed] (-.7,0)--(1.7,0);
\end{tikzpicture} &
\begin{tikzpicture}[
par/.style={circle,inner sep=0,minimum size=8mm,fill=black!60,draw=black,very thick},
ppar/.style={par,fill=white},>=stealth,shorten >=1pt]
\node at (0,0) [ppar]{};
\node at (.8,0) [par]{};
\draw[->, thick] (0,0)+(-.4,.0)--node[left]{$-F$}($(0,0)+(-.8,.0)$);
\draw[->, thick] ($(.8,0)+(.4,.0)$)--node[right]{$F$}($(.8,0)+(.8,.0)$);
\draw[dashed] (-.7,0)--(1.5,0);
\end{tikzpicture} &
\begin{tikzpicture}[
par/.style={circle,inner sep=0,minimum size=8mm,fill=black!60,draw=black,very thick},
ppar/.style={par,fill=white},>=stealth,shorten >=1pt]
\node (i) at (0,0) [ppar]{};
\node (j) at (1.,0) [par]{};
\draw[->, thick] ($(i)+(-.2,.6)$)--node[above]{$v_{i}$}($(i)+(.2,.6)$);
\draw[->, thick] ($(j)+(-.4,.6)$)--node[above]{$v_{j}$}($(j)+(.4,.6)$);
\draw[dashed] (-.7,0)--(1.7,0);
\end{tikzpicture}\\
(a) before collision &
(b) collision happen &
(c) after collision
\end{tabular}
\caption{\label{fig:collision}The sketch of a collision between the particle $i$ and the particle $j$. The vertical component of the velocity is omitted because it remains unchanged by the collision.}
\end{figure}
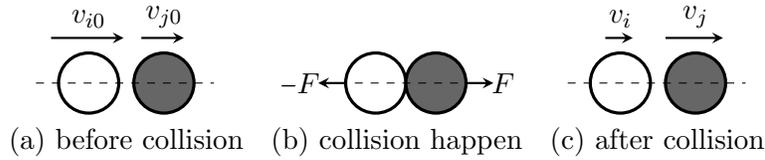

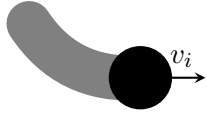
\begin{figure}
\centering
\begin{tikzpicture}[
par/.style={circle,inner sep=0,minimum size=8mm,fill=black,draw=black,very thick},
ppar/.style={par,fill=white},>=stealth,shorten >=1pt,scale=.5]
\draw[opacity=0.5,line width=6mm,line cap=round] ($(0,0)$)to[bend left=30]($(0,0)+(-3.,1.5)$);
\node (i) at (0,0) [par]{};
\draw[->, thick] ($(i)+(.4,.0)$)--node[above]{$v_{i}$}($(i)+(1.8,.0)$);
\end{tikzpicture}
\caption{\label{fig:particle}The illustration of the motion of the particle $i$ in the numerical results.}
\end{figure}
\subsection{Ability of the model to avoid collisions}
In this part, we present numerical results corresponding to four classical examples. The first three examples show the ability of the original model to avoid  collisions, while the last example shows the necessity for our model to add a deceleration force and a following interaction force. Moreover, a comparison among the three models defined in \eqref{eq:modelODE},\eqref{eq:RBM},\eqref{eq:ls} is provided in the last example to show the advantages of the hybrid model. It is noted that the time step $\Delta t$ is set as $2^{-7}$ in these four examples and the width of the cell list in the hybrid model and the short-force model is set as $4$m.

The first example is called \textbf{Circle} and the results are shown in Figure~\ref{fig:cir4p}. In this example, four particles are initially located along a circle and each particle's destination is the diametrically opposed position. The diameter of the circle is set as $10$m. Each particle would go ahead to its goal through the center of the circle if there is no interaction with others. It may happen that all the particles in the example go ahead to the center immediately and get stuck  there. But actually, each particle turn its direction in time and collisions are avoided.
\begin{figure}
\centering
\begin{subfigure}{0.25\linewidth}
\centering
\includegraphics[width=\linewidth]{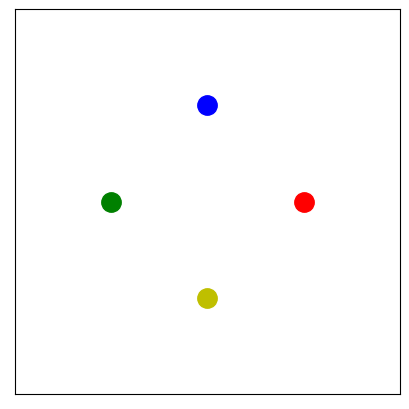}
\caption{$t=0s$}
\end{subfigure}
\begin{subfigure}{0.25\linewidth}
\centering
\includegraphics[width=\linewidth]{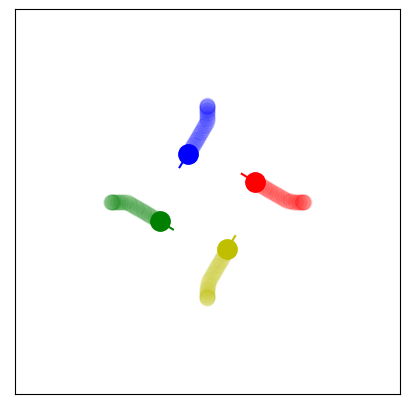}
\caption{$t=4s$}
\end{subfigure}

\begin{subfigure}{0.25\linewidth}
\centering
\includegraphics[width=\linewidth]{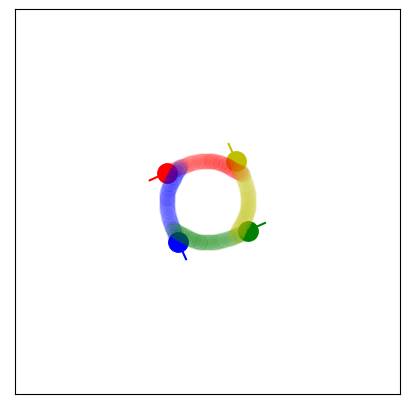}
\caption{$t=10s$}
\end{subfigure}
\begin{subfigure}{0.25\linewidth}
\centering
\includegraphics[width=\linewidth]{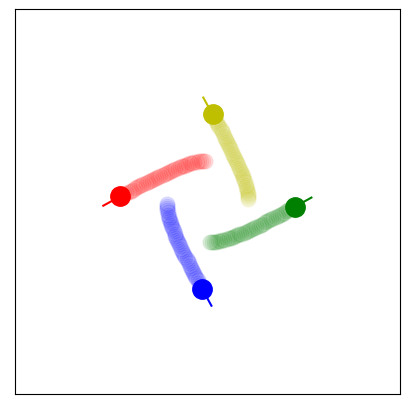}
\caption{$t=13s$}
\end{subfigure}
\caption{\label{fig:cir4p}\textbf{Circle:} four particles are initially located along a circle with $10$m diameter symmetrically. Each one's destination is the opposite position on the circle. The solution of the original model at different times is shown.}
\end{figure}

The second example is called \textbf{Obstacles} and the results are shown in Figure~\ref{fig:conDF2o}. In this example, twenty particles are initially located as four lines besides two circle obstacles with initial velocity $0.7$m/s facing the obstacles and each one's destination is the point $60$m in front of its initial position. The spacing between particles is $2$m in row and $3$m in line. The diameter of two circle obstacles are $4$m and $8$m respectively. We demonstrate the ability of our original model to avoid collisions with the static obstacles. At the beginning, the ones near the obstacles would decelerate to avoid collision and the ones on both sides would deviate to move around the obstacles. As time goes by, all the particles would line up in two lines and move around the obstacles to their destinations. The collision-interaction force, the imminent-interaction force and the following-interaction force collaborate nice to attain the aim in this example.
\begin{figure}
\centering
\begin{tabular}{c c c c}
\includegraphics[width=.18\linewidth]{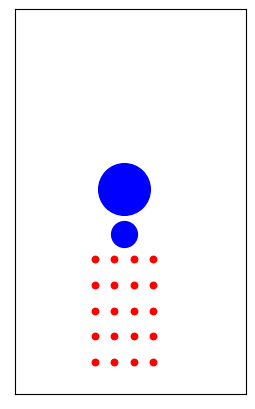}&
\includegraphics[width=.18\linewidth]{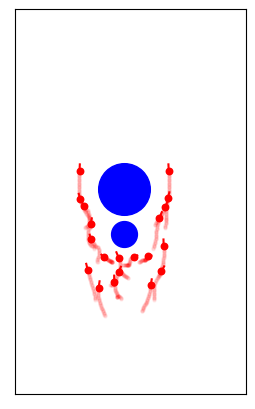}&
\includegraphics[width=.18\linewidth]{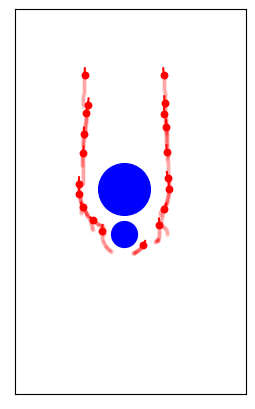} &
\includegraphics[width=.18\linewidth]{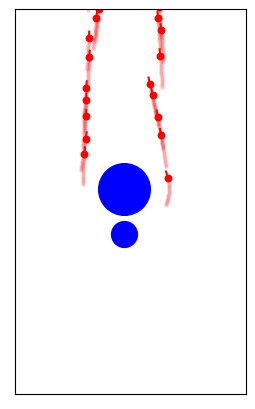}\\
(a) $t=0s$ &
(b) $t=15s$ &
(c) $t=30s$ &
(d) $t=45s$
\end{tabular}
\caption{\label{fig:conDF2o}\textbf{Obstacles:} Twenty particles are initially located in front of two circle obstacles with initial velocity $0.7$m/s facing the obstacles. The solution of the original model at different times is shown.}
\end{figure}

The third example is called \textbf{Crossing} and the results are shown in Figure~\ref{fig:conDF2t}. In this example, two groups of particles encounter at the intersection of two orthogonal lanes. Each group has twenty-five particles with spacing $1$m in row and $3$m in line. Each particle's destination is the point $44$m in front of its initial position and each has an initial velocity $0.7$m/s towards its goal. The width of two lanes is $20$m. It may happen that two groups of particle get congested in the intersection. Actually, the original model can avoid this situation to happen. When two groups meet, some particles would decelerate and some would deviate. It can be seen that, particles are split in several subgroups and pass through the crossroad in turn. 

\begin{figure}
\centering
\begin{subfigure}{0.25\linewidth}
\centering
\includegraphics[width=\linewidth]{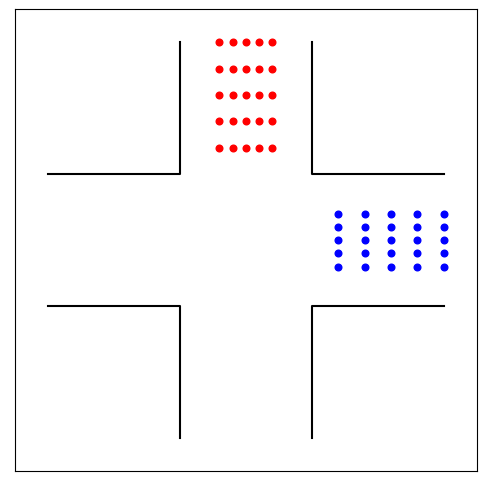}
\caption{$t=0s$}
\end{subfigure}
\begin{subfigure}{0.25\linewidth}
\centering
\includegraphics[width=\linewidth]{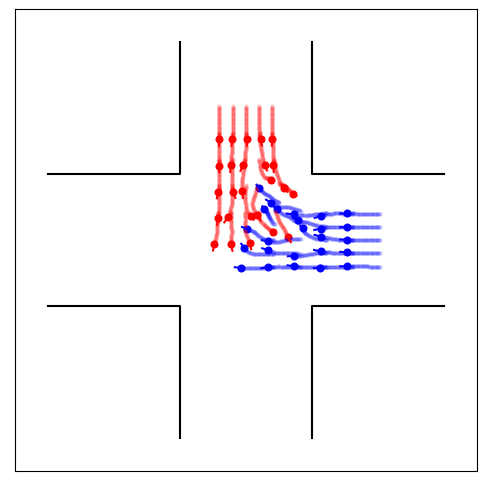}
\caption{$t=15s$}
\end{subfigure}

\begin{subfigure}{0.25\linewidth}
\centering
\includegraphics[width=\linewidth]{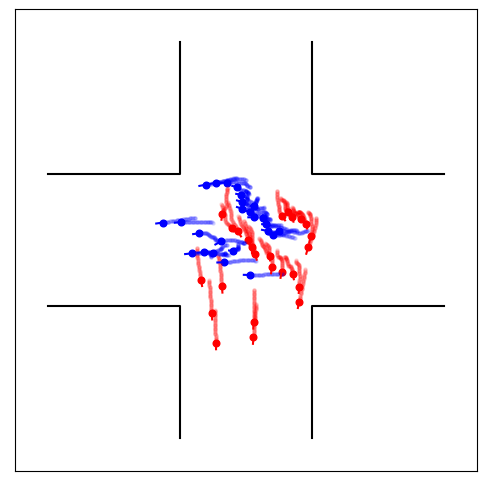}
\caption{$t=30s$}
\end{subfigure}
\begin{subfigure}{0.25\linewidth}
\centering
\includegraphics[width=\linewidth]{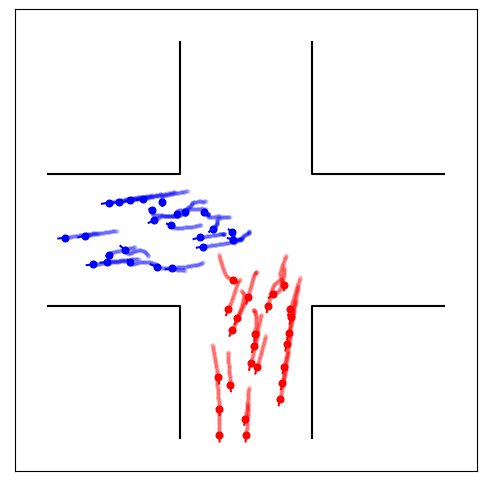}
\caption{$t=45s$}
\end{subfigure}
\caption{\label{fig:conDF2t}\textbf{Crossing:} Two groups meeting at the intersection of two orthogonal lanes. The solution of the original model at different time is shown.}
\end{figure}

The forth example is called \textbf{Group-swap} and the results are shown in Figure~\ref{fig:con100p1} and~\ref{fig:con100p2}. In this example, twenty-four particles are initially separated in two groups. The goal is to swap these two groups. The spacing between particles is $0.8$m in row and $2$m in line and each particle has an initial velocity $1$m/s (see Figure~\ref{fig:con100p1}(a)). The difficulty in this example is that the particles may not be able to decelerate or deviate in time to avoid collisions and may get congested.

Figure~\ref{fig:con100p1} (b) and (c) shows the necessity of introducing the imminent-interaction force and the following-interaction force. Without the imminent-interaction and the following-interaction force ($C_2=C_4=0$), particles can only avoid collisions by turning the motion direction. In Figure~\ref{fig:con100p1} (c), one red particle marked out by a black circle collides with two blue particles since the particles are too dense to have enough space to deviate. On the contrary, in Figure~\ref{fig:con100p1} (b), both the red particle and the blue one circled decelerate in time and move around each other with low velocity to pass by successfully. It is noted that the particles with the same direction line up to move in Figure~\ref{fig:con100p1} (b) while the particles in Figure~\ref{fig:con100p1} (c) do not. So we remark that lining up can relieve congestion when particles are dense.


\begin{figure}
\centering
\begin{subfigure}[t]{0.2\linewidth}
\includegraphics[width=\linewidth]{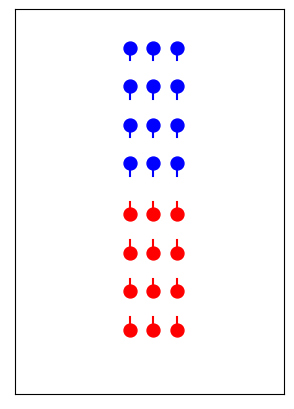}
\caption{}
\end{subfigure}
\begin{subfigure}[t]{0.2\linewidth}
\includegraphics[width=\linewidth]{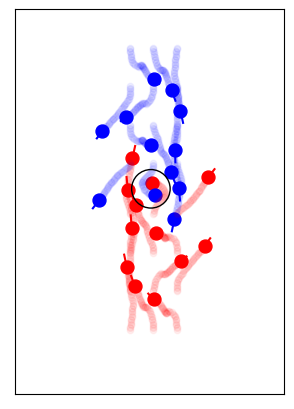}
\caption{}
\end{subfigure}
\begin{subfigure}[t]{0.2\linewidth}
\includegraphics[width=\linewidth]{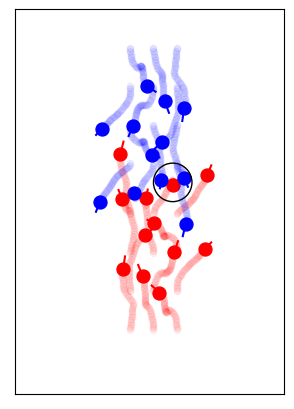}
\caption{}
\end{subfigure}
\caption{\label{fig:con100p1}\textbf{Group-swap:} twenty-four particles are split into two groups to swap with each other. (a) the initial state of particles; (b) the solution of the original model at $t=5$s; (c) the solution of the model without imminent and following interactions at $t=5$s.}
\end{figure}

We also compare the performance of the different models in this example and the results are shown in Figure~\ref{fig:con100p2}. The solution of the original model in Figure~\ref{fig:con100p2} (a) shows that the particles can line up to cross through each other to the destinations without collision or congestion, and the hybrid model has the same good performance in Figure~\ref{fig:con100p2} (b). However, it can happen that some particles would overlap in the RBM model. Because the interaction between particles may insufficient, especially when the number of particles are large, some collisions would not be detected and particles would overlap when they are in different batches in RBM model. We mark out an example of overlapping particles by a black circle in Figure~\ref{fig:con100p2} (c). Thus, the Cell-List approach in the hybrid model is necessary to avoid overlapping and collisions. Meanwhile, the result of the short-force model in Figure~\ref{fig:con100p2} (d) shows the necessity of the random batch in the hybrid model. With interaction only in the cell list, each particle can only detect ones surrounding it. So particles without the knowledge of the overall situation do not line up in time, which results in unavoidable collision and congestion in Figure~\ref{fig:con100p2} (d).

In conclusion, the original model and the hybrid model both have a good ability to avoid collisions and relieve congestion. Both the RBM algorithm and the Cell-List approach are necessary in the hybrid model. Indeed, on one hand, without the Cell-List approach, particles may overlap others, and on the other hand, without RBM algorithm, particles lack the knowledge of the overall situation and may not line up to avoid congestion in time.

\begin{figure}
\centering
\begin{subfigure}[t]{0.25\textwidth}
\centering
\includegraphics[width=.8\textwidth]{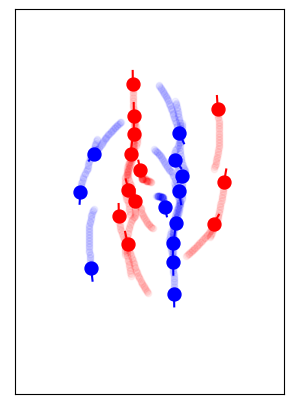}
\caption{The original model}
\end{subfigure}
\begin{subfigure}[t]{0.25\textwidth}
\centering
\includegraphics[width=.8\textwidth]{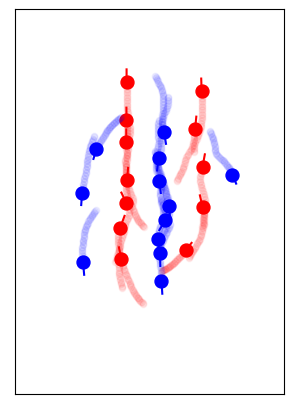}
\caption{The hybrid model}
\end{subfigure}

\begin{subfigure}[t]{0.25\textwidth}
\centering
\includegraphics[width=.8\textwidth]{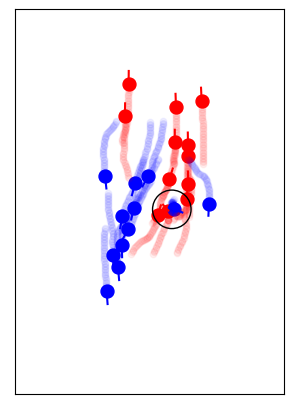}
\caption{The RBM model}
\end{subfigure}
\begin{subfigure}[t]{0.25\textwidth}
\centering
\includegraphics[width=.8\textwidth]{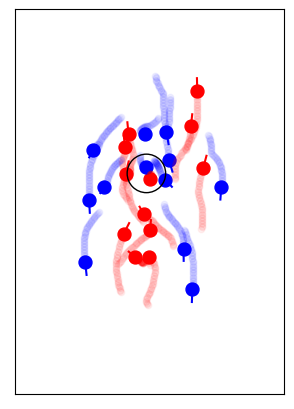}
\caption{The short-force model}
\end{subfigure}
\caption{\label{fig:con100p2}\textbf{Group-swap:} the solution of different models at $t=11$s.}
\end{figure}

\subsection{Energy loss and $L^2$-norm of the model}\label{sec:energynorm}
In this subsection, we compare the hybrid model with the RBM model by an qualitative analysis thanks to the energy loss and the $L^2$-norm. Our objective is to get the solution of trajectory for all particles without collisions or overlapping. So, considering this objective, the total energy loss caused by inelastic collisions between particles should be as little as possible. In addition, spacing between each two particles should not be too small otherwise collisions or overlapping may happen. To evaluate whether overlapping of particles happens, we substitute the distribution function $\delta(x-x_i)$ for any particle $i$ by a Gaussian distribution function and define a smooth distribution function $g^N(x,t)$, which is a normalized superposition of the $N$ particles' Gaussian distribution functions in sense of $L^1$-norm, defined by
\begin{equation}\label{eq:gN}
g^N(x,t) = {1\over N}\sum_{i=1}^N \left(\cfrac{1}{\pi a^2}\right) e^{-\|x-x_i\|^2\over a^2}, \quad \forall x\in\RR^2,
\end{equation}
where $x_i=x_i(t), \forall i\in \{1,\ldots,N\}$. And the $L^2$-norm of the function $g^N(x,t)$ is given by
\begin{equation}\label{eq:l2norm}
\|g^N\|_{L^2} = \left(\int\limits_{\RR^2} {g^N(x,t)}^2\rd x\right)^{1\over2} =\left({1\over 2\pi a^2N^2} \sum_{i,j} e^{-\|x_i-x_j\|^2\over2a^2}\right)^{1\over 2}.
\end{equation}

With proper parameter $a$, the $L^2$-norm of $g^N(x,t)$ can present how many particles overlap. For the two particles $i$ and $j$, it is required that the corresponding exponential term in \eqref{eq:l2norm} should grow to $1$ only if overlapping happens ($\|x_i-x_j\|<2R_0$), while the term should maintain small if the two particles are close but without overlapping. So it is important to choose the proper value of $a$ in~(\ref{eq:gN}) to reflect the overlapping phenomenon accurately. The parameter $a$ is determined by the relationship below,
\begin{equation*}
\int\limits_{\mathcal{B}(0,R_0)}\left(\cfrac{1}{\pi a^2}\right) e^{-\|x\|^2\over a^2}\rd x = 0.99,
\end{equation*}
where $\mathcal{B}(0,R_0)=\{x\in\RR^2:\|x\|<R_0\}$, $R_0=0.5$. It means that the distribution is mainly included in the volume disk of the particle, satisfying the requirement that only overlapping can result in rapidly growth of the exponential term in \eqref{eq:l2norm}. Therefore,  the value of parameter $a$ is computed as $\frac{0.5}{\sqrt{2\ln(10)}}$.

In summary, two estimates are introduced to analyze the performance of the two models. One is the energy loss caused by inelastic collisions, which evaluate the ability for the model to avoid collisions. Another is the $L^2$-norm of the distribution function $g^N(x,t)$, reflecting whether particles overlap others.
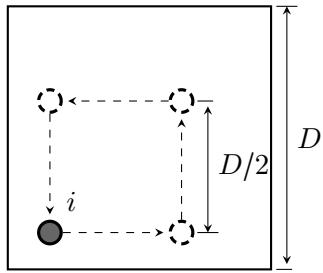
\begin{figure}
\centering
\begin{tikzpicture}[
par/.style={circle,inner sep=0,minimum size=3mm,fill=black!60,draw=black,very thick},
ppar/.style={par,fill=white,densely dashed},>=stealth,shorten >=2pt, scale=0.7]
\draw[thick] (0,0) rectangle (5,5);
\node (i) at (.8,.7)[par,label=60:$i$]{};
\node (i1) at ($(i)+(2.5,0)$)[ppar]{};
\node (i2) at ($(i1)+(0,2.5)$)[ppar]{};
\node (i3) at ($(i2)+(-2.5,0)$)[ppar]{};
\draw[->,dashed] (i)to(i1);
\draw[->,dashed] (i1)to(i2);
\draw[->,dashed] (i2)to(i3);
\draw[->,dashed] (i3)to(i);
\draw (5.1,5)--(5.5,5) (5.1,0)--(5.5,0);
\draw[<->] (5.3,5)--node[right]{$D$}(5.3,0);
\draw ($(i1)+(.3,0)$)--($(i1)+(.7,0)$) ($(i2)+(.3,0)$)--($(i2)+(.7,0)$);
\draw[<->] ($(i1)+(.5,0)$)--node[right]{$D/2$}($(i2)+(.5,0)$);
\end{tikzpicture}
\caption{\label{fig:500example}$N$ particles are distributed randomly in the square with width $D$ and confined in the square. Each particle move along a square with width $D/2$ counterclockwise.}
\end{figure}

We compare the hybrid model with the RBM model in an example illustrated in Figure~\ref{fig:500example}. In this example, all the particles are confined in a square with width $D$ and distributed randomly in this square. Each particle sets its destination to move along a square with width $D/2$ counterclockwise. Set $D=50$ and $N=500$, we solve the hybrid model and the RBM model during $t\in[0,T]$ under the time step $\Delta t=2^{-7},2^{-6},2^{-5},2^{-4}$ respectively. The corresponding results are shown in Figure~\ref{fig:energynorm}. Figure~\ref{fig:energynorm}(a) shows that the energy loss caused by collisions for the hybrid model accumulates as time goes and converges to zero as the time step $\Delta t\to 0$. In contrast, it is interesting to note that in Figure~\ref{fig:energynorm}(b), the energy loss for the RBM model diverges as the time step $\Delta t$ goes smaller. It is because interactions between the particles are insufficient in RBM model in these cases and the particles with high collision risk may overlap rather than collide since the particles are not in the same batch. As the time step $\Delta t$ getting smaller, the partition of batches is shuffled more frequently and the collisions are more likely to be detected, resulting in larger energy loss. 

Figure~\ref{fig:500example}(c) and (d) show the $L^2$-norm for the two models respectively. Without the overlapping phenomenon, the $L^2$-norm of the distribution function $g^N$ corresponding to the hybrid model maintains a constant with small perturbation while the one for the RBM model becomes large as time goes, reflecting the occurrence of overlapping.

In conclusion, the hybrid model performs better than the RBM model in this test case. The hybrid model has less energy loss and can avoid the overlapping phenomenon.
\begin{remark}
It is noted that as the time step $\Delta t\to 0$, with more frequent shuffle of batches, the energy loss for the RBM model would also converge to zero, and the $L^2$-norm would also be maintained as a constant similarly to the hybrid model but with a huge computational cost. Of course, it is far from our intention to use RBM algorithm to reduce the computational complexity. So, it's practical and useful to add the Cell-List approach to the RBM model to construct the hybrid model.
\end{remark}

\begin{figure}
\centering
\begin{tabular}{c c}
\includegraphics[width=.45\linewidth]{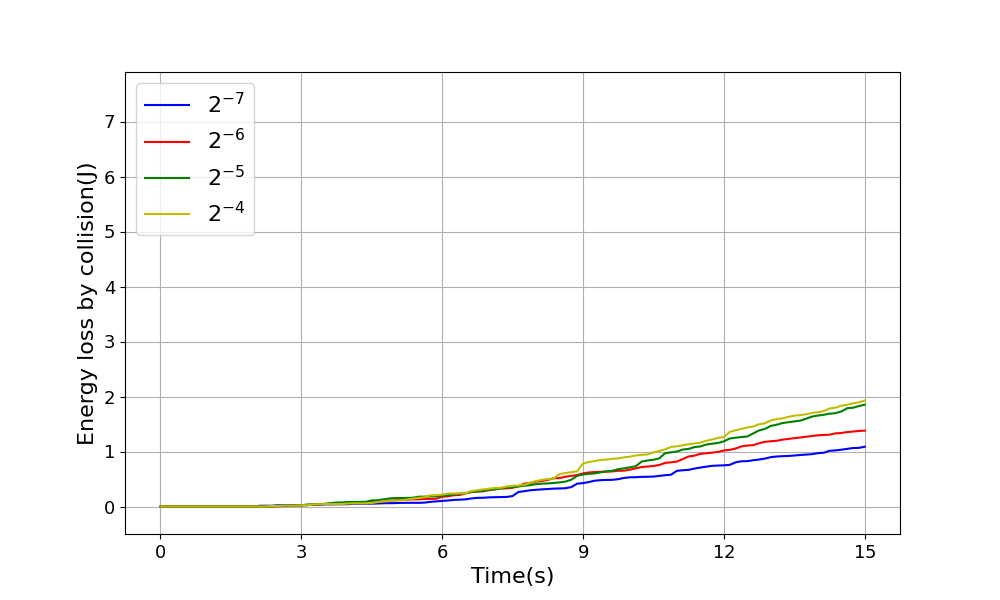}&
\includegraphics[width=.45\linewidth]{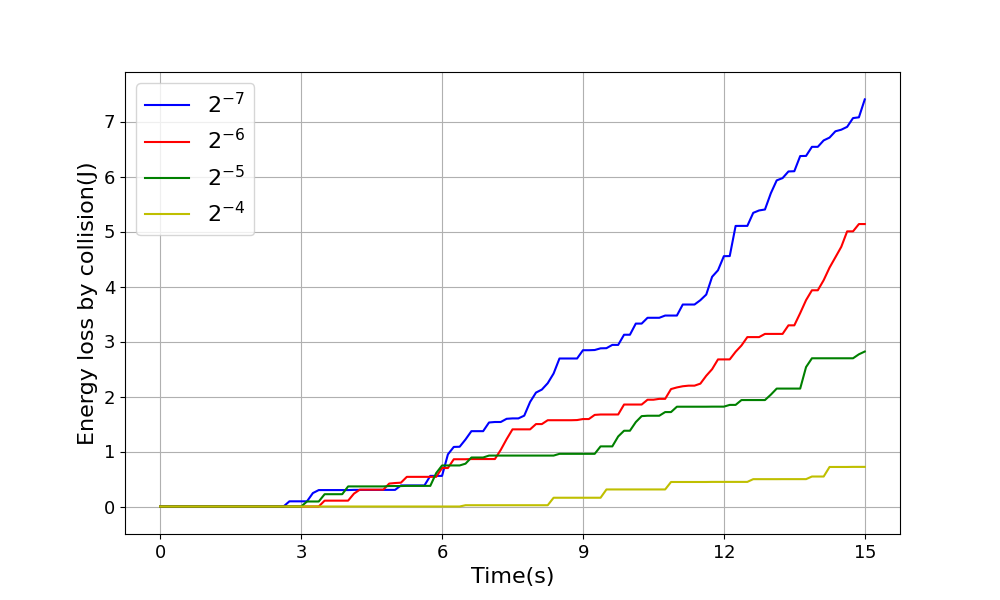}\\
(a) Energy loss for the hybrid model&
(b) Energy loss for the RBM model \\
\includegraphics[width=.45\linewidth]{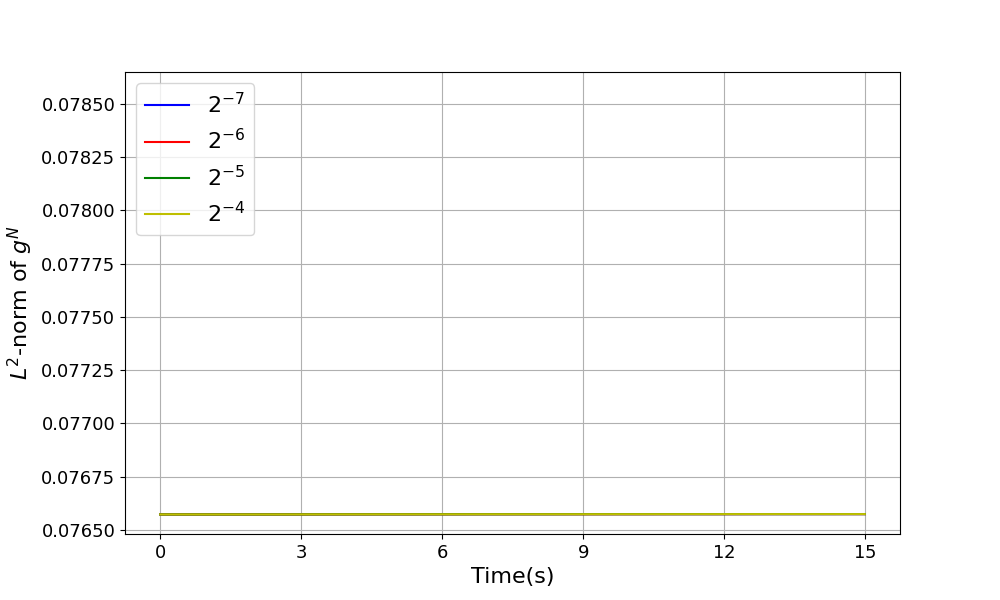}&
\includegraphics[width=.45\linewidth]{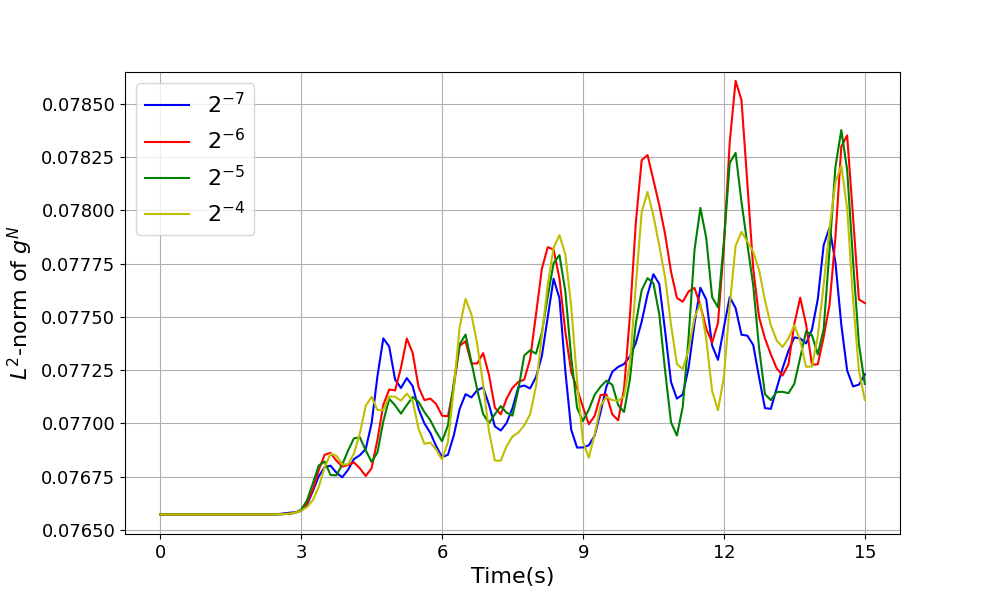}\\
(c) $L^2$-norm for the hybrid model&
(d) $L^2$-norm for the RBM model
\end{tabular}
\caption{\label{fig:energynorm}The energy loss and the $L^2$-norm for the hybrid model and the RBM model during $t\in[0,15]$ when $N=500$ and $D=50$.}
\end{figure}

\subsection{Efficiency of the hybrid model}
To test the practical performance for the hybrid model, we use the same example applied in Section~\ref{sec:energynorm} to show that the CPU time for the hybrid model and the original model as a function of the number of particles $N$. Since each model has a regular volume and the density of particles can not be too high, we prefixed the density of particles and test the two models in the situations with different number of particles. We choose $N=2^5,2^7,2^9,2^{11},2^{13}$, $D=25,50,100,200,400$ respectively and solve the two model during $t\in[0,15]$ with $\Delta t=2^{-4}$. The simulation was done using Python 3.8.10 on Linux system with one $2.6$ GHz Intel Core E5-2690v3 processor. The relationship between the simulation time and the number of particles are shown in loglog scale in Figure~\ref{fig:efficiency}. Clearly, the curve for the hybrid model is close to the straight line with slope $1$ and the one for the original model is close to the straight line with slope $2$. Thus, the computational complexity for the hybrid model is of order $\mathcal{O}(N)$ while the one for the original model is of order $\mathcal{O}(N^2)$. The results imply that our hybrid model is efficient to solve the collision-avoidance model in a reasonable time amount especially when the number of particles is huge.
\begin{figure}
\centering
\includegraphics[width=.5\linewidth]{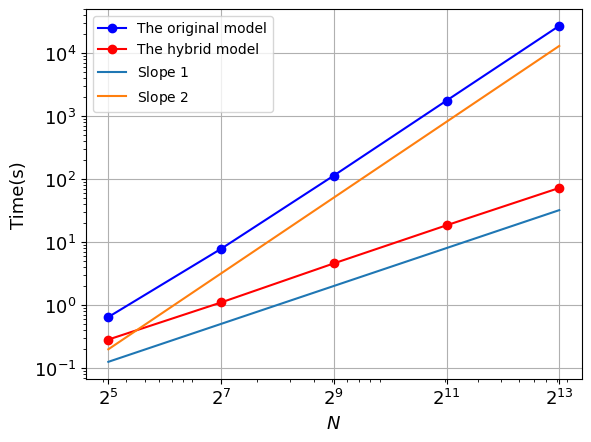}
\caption{\label{fig:efficiency}The simulation time of the hybrid model and the original model as a function of the number of particles $N$ in loglog scale.}
\end{figure}

\section{Conclusion and perspectives}\label{sec:Conclusion}
In this paper, we firstly present a novel two dimensional agent-based model for collision avoidance inspired from the former works for aerial vehicles and crowds~\cite{parzani2017three, ondvrej2010synthetic}. This agent-based model is a Newtonian system based on a ``vision cone'' framework. Compared to the former vision-based models, the particles can decelerate when the collision is imminent and can move in a line to relieve congestion apart from merely considering collision avoidance by deviations. We  also propose the corresponding mean-field limit model and prove the existence of the weak solution in sense of \cite{carrillo2018mean}.

Secondly, we introduce the RBM algorithm to solve the model efficiently and propose the corresponding mean-field limit for the RBM model. Combining the Cell-List approach with RBM, the hybrid model for collision avoidance is proposed for better performance to plan particles' motion flow without overlapping between particles. The hybrid model reduces the computational complexity from $\mathcal{O}(N^2)$ to $\mathcal{O}(N)$.

Thirdly, we perform several numerical simulations to illustrate the ability of the original model and the hybrid model to avoid collisions and relieve congestion. From the qualitative analysis of collision avoidance by the energy loss and the  $L^2$-norm conservation, we show that the hybrid model has a better performance compared to the RBM model. In addition, it is verified that the computational complexity of the hybrid model is of order $\mathcal{O}(N)$. In conclusion, the hybrid model has a good performance to collision avoidance with a low computational cost.
 
Our results are promising and open several future work directions. On one hand, the parameters in the model can be optimized automatically by some estimates to evaluate the performance of the model. On the other hand, the mean-field limit for the hybrid model would be investigated and proposed.
\section*{Acknowledgements}

This work has been supported by Heilongjiang Provincial Natural Science Foundation of China
(LH2019A013) and the Fundamental Research Funds for the Central Universities.

Zhichang Guo acknowledges support by the National Natural Science Foundation of China (12171123,
11971131, U21B2075).

%
  \appendix
\section{Proof of Theorem~\ref{th:lip}}
For the proof of the result stated in theorem~\ref{th:lip}  we split the force field $\mathcal{F}(f)$ into four parts: the collision-interaction force $\mathcal{F}^{Co}(f)$, the imminent-interaction force $\mathcal{F}^{Im}(f)$, the following-interaction force $\mathcal{F}^{Fo}(f)$ and the exit-interaction force $\mathcal{F}^{Ex}(f)$. We demonstrate these four parts are all Lipschitz continuous, then the Lipschitz continuity of the force field $\mathcal{F}(f)$ is deduced.

Let's recall that the total force is
\begin{equation*}
\mathcal{F}(f) = \mathcal{F}^{Co}(f) + \mathcal{F}^{Im}(f) + \mathcal{F}^{Fo}(f) +\mathcal{F}^{Ex}(f),
\end{equation*}
where
\begin{align}
\mathcal{F}^{Co}(f) &= \Omega_{Co}\cdot v^{\bot},\notag\\
\mathcal{F}^{Im}(f) &= -\Omega_{Im}\cdot v,\notag\\
\mathcal{F}^{Fo}(f) &= \Omega_{Fo}\cdot v^{\bot},\notag\\
\mathcal{F}^{Ex}(f) &= -\nabla_x V(x) - \sigma v. \notag
\end{align}

First, we prove thanks to the following proposition that the collision-interaction force $\mathcal{F}^{Co}(f)$ is Lipschitz continuous.
\begin{proposition}\label{pro:co}
For any $(x,v)\in\RR^2\times\RR^2 , t\in[0,T]$, we have following estimations,
\begin{equation*}
|\mathcal{F}^{Co}(f)(t,x,v)| \leq C\|v\| \|f\|_{L^1},
\end{equation*} and
\begin{equation*}
|\mathcal{F}^{Co}(f)(t,x,v)-\mathcal{F}^{Co}(f)(t,\tilde x,\tilde v)|\leq C(1+\|v\|)\left\|\begin{pmatrix}x\\v\end{pmatrix}-\begin{pmatrix}\tilde x\\ \tilde v\end{pmatrix}\right\|,  
\end{equation*}
where
\begin{equation*}
\begin{array}{r l}
\mathcal{F}^{Co}(f)(t,x,v) &= \Omega_{Co}\cdot v^{\bot}\\
 &= \cfrac{1}{\lambda^{Co}(t,x,v)} \iint\limits_{\RR^2\times\RR^2} m^{Co}(y-x,v,w)\mathbbm{1}_{\mathcal{K}^{Co}(v,w)}(y-x)f(t,y,w)\rd y\rd w \cdot v^\bot.
\end{array}
\end{equation*}
\end{proposition}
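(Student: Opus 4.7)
The plan is to treat $\mathcal{F}^{Co}(f) = (\mathcal{N}/\lambda^{Co})\, v^\bot$ as a product of three factors, where
\begin{equation*}
\mathcal{N}(t,x,v) := \iint\limits_{\RR^2\times\RR^2} m^{Co}(y-x,v,w)\,\mathbbm{1}_{\mathcal{K}^{Co}(v,w)}(y-x)\,f(t,y,w)\,\rd y\,\rd w.
\end{equation*}
Both the pointwise bound and the Lipschitz estimate will then follow from uniform control on each factor together with a quotient decomposition. The non-trivial ingredient throughout will be handling the discontinuous indicator $\mathbbm{1}_{\mathcal{K}^{Co}(v,w)}$; this is where the compact support hypothesis on $f$ and the $L^\infty$ bound come in.

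First I would establish the pointwise bound. On $\mathcal{K}^{Co}(v,w)$ one has $\tau\geq 0$, hence $e^{-\tau/C_1}\leq 1$; the mollification satisfies $|\cos^\varepsilon|\leq 1$ and $g$ is bounded by $1+\delta_1$, so $|m^{Co}|\leq M:=C_0(1+\delta_1)$. Combined with $\lambda^{Co}\geq\beta$ and $|v^\bot|=\|v\|$, this immediately yields $|\mathcal{F}^{Co}(f)(t,x,v)|\leq (M/\beta)\|f\|_{L^1}\|v\|$, which is the first estimate. The complementary upper bound $\lambda^{Co}\leq \|f\|_{L^1}+\beta$ will also be needed later.

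For the Lipschitz estimate, I would decompose
\begin{equation*}
\frac{\mathcal{N}(x,v)\,v^\bot}{\lambda^{Co}(x,v)} - \frac{\mathcal{N}(\tilde x,\tilde v)\,\tilde v^\bot}{\lambda^{Co}(\tilde x,\tilde v)} = \frac{\mathcal{N}(x,v)}{\lambda^{Co}(x,v)}(v-\tilde v)^\bot + \left[\frac{\mathcal{N}(x,v)}{\lambda^{Co}(x,v)}-\frac{\mathcal{N}(\tilde x,\tilde v)}{\lambda^{Co}(\tilde x,\tilde v)}\right]\tilde v^\bot.
\end{equation*}
The first term is bounded by $(M/\beta)\|f\|_{L^1}\|v-\tilde v\|$ from the previous step. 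For the bracketed term, apply the quotient identity $|a_1/b_1-a_2/b_2|\leq |a_1-a_2|/\beta + |a_2||b_1-b_2|/\beta^2$ with $b_i\geq\beta$ to reduce matters to Lipschitz estimates on $\mathcal{N}$ and $\lambda^{Co}$. Both share the structure $\iint G(x,v,y,w)\,\mathbbm{1}_{\mathcal{K}^{Co}(v,w)}(y-x)\,f(t,y,w)\,\rd y\,\rd w$ with $G=m^{Co}$ or $G\equiv 1$, and I would split the $(x,v)$-variation into (i) the variation of the smooth amplitude $G$, controlled by $\|f\|_{L^1}$ together with compact support of $f$ that keeps $w-v$ and $y-x$ in a bounded region where $m^{Co}$ is Lipschitz (with the measure-zero degeneracy $w=v$ of $\tau$ and $D$ irrelevant for integration against $f\in L^1$), and (ii) the variation of the discontinuous indicator, controlled by $\|f\|_{L^\infty}$ times the Lebesgue measure of the symmetric difference of the indicator sets, intersected with $\mathrm{supp}_y f$.

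The hard part will be step (ii), quantifying the symmetric-difference area as $(x,v)$ varies. The plan is to rewrite the defining inequalities of $\mathcal{K}^{Co}(v,w)=\mathcal{C}(v)\cap I^{Co}(w-v)$ in polynomial form, e.g.\ $\|z\|^2\|w-v\|^2-\langle z,w-v\rangle^2\leq R^2\|w-v\|^2$ for $D\leq R$ and $\langle z,v\rangle^2\geq \kappa^2\|z\|^2\|v\|^2$ for the vision cone, so that the defining functions are Lipschitz in $(v,w)$ on bounded regions with non-degenerate boundaries away from the measure-zero set $\{w=v\}$. Since $f$ is compactly supported in phase space, both $z=y-x$ and $u=w-v$ remain in a bounded set, and the usual level-set argument gives a symmetric-difference area of $O(\|x-\tilde x\|+\|v-\tilde v\|)$. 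Collecting all contributions and using $\|\tilde v^\bot\|\leq \|v\|+\|v-\tilde v\|$ in the bracketed term produces the $(1+\|v\|)$ prefactor in the final estimate.
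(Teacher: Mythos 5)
Your overall architecture matches the paper's: the pointwise bound from $|m^{Co}|\leq C$, $\lambda^{Co}\geq\beta$, $|v^\bot|=\|v\|$; and the Lipschitz bound via a quotient decomposition that reduces everything to (a) Lipschitz continuity of the smooth amplitude on the common part of the indicator sets and (b) an $\|f\|_{L^\infty}$-times-measure bound on their symmetric difference. The gap is in step (b), which is precisely the content of the paper's key Lemma~\ref{le:lambda}. Your claim that ``the usual level-set argument gives a symmetric-difference area of $O(\|x-\tilde x\|+\|v-\tilde v\|)$'' is false as stated: the vision cone $\mathcal{C}(v)$ depends only on the direction $v/\|v\|$, so for $v=(\delta,0)$, $\tilde v=(-\delta,0)$ one has $\|v-\tilde v\|=2\delta$ arbitrarily small while $\mathcal{C}(v)\Delta\mathcal{C}(\tilde v)$ intersected with the (fixed) support of $f$ has measure of order $r^2$, independent of $\delta$. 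Equivalently, in your polynomial formulation the defining function $\langle z,v\rangle-\kappa\|z\|\|v\|$ has $z$-gradient of size only $\gtrsim\|v\|(1-\kappa)$ near its zero set, so the level-set argument degenerates as $\|v\|\to0$. The correct bound, which the paper proves by estimating $|\alpha(v,\tilde v)|\leq C\|v-\tilde v\|/\|v\|$ (treating $\|v-\tilde v\|\gtrless\tfrac12\|v\|$ separately), is $C\bigl(1+\tfrac{1}{\|v\|}\bigr)\bigl\|\bigl(\begin{smallmatrix}x\\v\end{smallmatrix}\bigr)-\bigl(\begin{smallmatrix}\tilde x\\\tilde v\end{smallmatrix}\bigr)\bigr\|$, and this $1/\|v\|$ is the actual reason the final estimate carries a $(1+\|v\|)$ prefactor --- not, as you suggest, the inequality $\|\tilde v^\bot\|\leq\|v\|+\|v-\tilde v\|$.

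This matters for your assembly step as well: once the symmetric-difference bound carries the factor $1/\|v\|$, pairing the bracketed quotient difference with $\tilde v^\bot$ and using $\|\tilde v\|\leq\|v\|+\|v-\tilde v\|$ produces an uncontrolled cross term $\|v-\tilde v\|^2/\|v\|$, which is not dominated by $(1+\|v\|)\|v-\tilde v\|$ (take $\|v\|=\delta$, $\|v-\tilde v\|=\sqrt\delta$). The paper sidesteps this by arranging the decomposition so that the $\lambda^{Co}$- and indicator-difference terms are multiplied by $\|v\|$ itself, giving $\|v\|\cdot(1+1/\|v\|)=1+\|v\|$ exactly; alternatively your version can be repaired by treating the regime $\|v-\tilde v\|\geq\tfrac12\|v\|$ separately via the triangle inequality and the pointwise bound. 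As written, though, both the stated symmetric-difference estimate and the final absorption into $(1+\|v\|)$ fail near $v=0$, so the proof is incomplete at its central quantitative step.
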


To better organize the proof of proposition\ref{pro:co}, we will introduce some notations, state some definitions and lemma.\\   
Let's denote 
\[\tilde{\mathcal{K}}^{Co}{(x,v)} = \{(y,w)\in\RR^2\times\RR^2 : (y-x)\in\mathcal{K}^{Co}{(v,w)}\},\] so we can rewrite $\mathbbm{1}_{\mathcal{K}^{Co}(v,w)}(y-x)=\mathbbm{1}_{\tilde{\mathcal{K}}^{Co}(x,v)}(y,w)$, and $\Omega_{Co}(x,v)$ as
\begin{equation*}
\begin{split}
\Omega_{Co}(x,v)&=\cfrac{1}{\lambda^{Co}(t,x,v)} \iint\limits_{\RR^2\times\RR^2} m^{Co}(y-x,v,w)\mathbbm{1}_{\mathcal{K}^{Co}(v,w)}(y-x)f(t,y,w)\rd y\rd w\\
&=\cfrac{1}{\lambda^{Co}(t,x,v)} \iint\limits_{\RR^2\times\RR^2} m^{Co}(y-x,v,w)\mathbbm{1}_{\tilde{\mathcal{K}}^{Co}(x,v)}(y,w)f(t,y,w)\rd y\rd w.
\end{split}
\end{equation*}

Let's introduce the notation
\begin{equation*}
A\Delta B = (A\backslash B)\bigcup (B\backslash A),
\end{equation*}
for any sets $A$ and $B$.
 
{Since $f$ is compactly support in phase space, we assume that $\exists  r>0$, such that $supp(f)\subset \{(x,v)\in\RR^2\times\RR^2 :\|x\|<r,\|v\|<r\}$.}

Let's state the meaning of $\epsilon$-boundary for a set in the following definition.
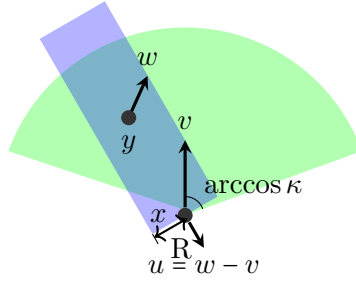
\begin{figure}
\centering
\begin{tikzpicture}[par/.style={circle,inner sep=0,minimum size=2mm,fill=black!80},
ppar/.style={par,fill=red!80},scale=0.5]
\node (x) at (0,0) [label=left:$x$, par] {};

\draw [->,>=stealth,very thick] (x) -- +(90:2) node (v) [above]{$v$};
\draw [->,>=stealth,very thick] (x) -- +(-60:1) node (u) [below]{$u=w-v$};
\node (w) at ($(v)+(u)$) []{};
\draw [->,>=stealth,very thick] ($(x)+(120:3)$) -- ($(x)+(120:3)+(w)$) node [above]{$w$};
\node at ($(x)+(120:3)$) [label=below:$y$, par]{};
\draw [thin] (x)++(20:0.5) arc (20:90:0.5);
\path (x)++(20:0.5) --node [above right]{$\arccos \kappa$}  (90:0.5);
\draw [<->, thick] (x)+(-60:0.1) --node[below =1pt,xshift=4pt,yshift=1pt]{R} ($(x)+(30:-1)+(-60:0.1)$);
\draw [thin] ($(x)+(30:-1)$)--($(x)+(30:-1)+(-60:0.3)$);

\begin{scope}[on background layer]
\fill [green!50, opacity=0.6] (x) -- ++(20:5) arc (20:160:5) -- cycle;
\fill [blue, opacity=0.3, rotate=30] ($(x)-(1,0)$) rectangle ($(x)+(1,6)$);
\end{scope}
\end{tikzpicture}
\caption{\label{fig:Kco}The sketch of the collision-interaction set $\mathcal{K}^{Co}(v,w)=I^{Co}(w-v)\cap \mathcal{C}(v)$. The section is ``vision cone'' $\mathcal{C}(v)$, and the rectangle is the set $I^{Co}(w-v)$.}
\end{figure}

\begin{definition}
Let $K\subset \RR^d$ be a non-empty compact set and $\epsilon>0$. We define the $\epsilon$-boundary of $K$ by:
\begin{equation*}
\partial^\epsilon K:=\partial K+\overline{B(0,\epsilon)}=\{x+y : x\in\partial K,\|y\|\leq \epsilon\}.
\end{equation*}
\end{definition}

From this definition of the $\epsilon$-boundary for a set, we stated the following lemma to characterize its measure. 
\begin{lemma}\label{le:pkco}
For any $(x,v)\in\RR^2\times\RR^2$ and $\epsilon\in\RR$, there exists a constant $C>0$, such that the measure satisfies $|\partial^\epsilon\tilde{\mathcal{K}}^{Co}{(x,v)}|<C\epsilon$. Moreover for any $\Delta x\in\RR^2$, we have $\tilde{\mathcal{K}}^{Co}{(x+\Delta x,v)}=\tilde{\mathcal{K}}^{Co}{(x,v)} + \Delta x$.
\end{lemma}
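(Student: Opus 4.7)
The second assertion is essentially tautological. The defining condition $(y-x) \in \mathcal{K}^{Co}(v,w)$ depends on $x$ and $y$ only through their difference, so $(y+\Delta x, w) \in \tilde{\mathcal{K}}^{Co}(x+\Delta x, v)$ if and only if $(y+\Delta x)-(x+\Delta x) = y-x \in \mathcal{K}^{Co}(v,w)$, i.e.\ if and only if $(y,w) \in \tilde{\mathcal{K}}^{Co}(x, v)$. I would simply write this out in one line, interpreting the translation as acting on the $y$-component.

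For the boundary-measure estimate, my plan is to decompose the boundary of $\tilde{\mathcal{K}}^{Co}(x,v)$ into finitely many piecewise-smooth hypersurfaces and apply a standard tubular-neighbourhood estimate. Unfolding the definitions of $\mathcal{C}(v)$ and $I^{Co}(w-v)$ gives
\begin{equation*}
\tilde{\mathcal{K}}^{Co}(x,v) = \{(y,w) : y-x \in \mathcal{C}(v),\ \langle y-x,\, w-v\rangle \leq 0,\ D(y-x, w-v) < R\},
\end{equation*}
so its topological boundary is contained in the union of three strata: the lateral cone boundary $\partial\mathcal{C}(v)$ (in the $y$-variable, with $w$ unrestricted), the back hyperplane $\{\langle y-x, w-v\rangle = 0\}$, and the two strip walls $\{D(y-x, w-v) = R\}$. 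Because $f$ has compact support in $B(0,r)\times B(0,r)$, it is enough to bound the four-dimensional Lebesgue measure of the $\epsilon$-thickening of each stratum intersected with that ball. Each stratum is, away from its singular locus, a graph of a smooth function over a coordinate hyperplane with gradients controlled by $\kappa$, $R$ and $\|w-v\|^{-1}$, so the classical tube inequality delivers a linear-in-$\epsilon$ bound with a constant depending on the surface area.

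The main technical nuisance is the degeneracy of the formulas for $\tau$ and $D$ when $w \approx v$ (where the direction of the half-strip $I^{Co}(w-v)$ collapses) and of the cone parametrisation when $y \approx x$ (its vertex). I would treat these loci separately: the regions $\{\|w-v\| < \delta\}$ and $\{\|y-x\| < \delta\}$ in $\mathbb{R}^2$ each have two-dimensional area $O(\delta^2)$, so their contribution to the four-dimensional volume, with the complementary variable running in a fixed ball, is already $O(\delta^2)$. Choosing $\delta = \sqrt{\epsilon}$ the degenerate regions contribute $O(\epsilon)$; on the complement the gradients of the defining functions are bounded uniformly, the three-dimensional Hausdorff measure of each stratum is finite, and the tube estimate gives $|\partial^\epsilon \tilde{\mathcal{K}}^{Co}(x,v) \cap (B(0,r)\times B(0,r))| \leq C\epsilon$ with $C$ depending only on $r$, $R$, and $\kappa$. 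The uniformity in $(x,v)$ follows from the translation identity of part two combined with $(x,v)\in \mathrm{supp}(f)$, which confines $(y-x, w-v)$ to a fixed compact set.
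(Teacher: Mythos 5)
Your translation identity is exactly the paper's argument and is fine. For the measure estimate your overall strategy also matches the paper's (slice or stratify the boundary, bound its codimension-one measure, then thicken), and you correctly identify the degeneracy at $w=v$ that the paper's own proof passes over in silence. However, the specific mechanism you propose for handling that degeneracy does not close. On the complement $\{\|w-v\|\geq\delta\}$ the defining function of the stratum $\{D(y-x,w-v)=R\}$ depends on $w$ only through the direction $(w-v)/\|w-v\|$, so its Lipschitz constant in $w$ is of order $r/\delta$, not uniformly bounded: it is ``uniform'' only for fixed $\delta$, and your $\delta$ is chosen to depend on $\epsilon$. The coarea-type tube inequality you invoke ($N_\epsilon(\{g=0\})\subset\{|g|\leq L\epsilon\}$, then integrate) therefore yields a bound of order $L\epsilon\sim\epsilon/\delta$, which with $\delta=\sqrt{\epsilon}$ is $O(\sqrt{\epsilon})$, and even the optimal choice $\delta\sim\epsilon^{1/3}$ only gives $O(\epsilon^{2/3})$. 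So as written the argument proves a weaker estimate than $C\epsilon$.

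The lemma is nevertheless true and your skeleton is repairable in at least two ways. Either slice by $w$ throughout: for each $w$ the slice of $\partial^\epsilon\tilde{\mathcal{K}}^{Co}(x,v)$ is contained in an $\epsilon$-neighbourhood of the union of the polygonal boundaries $\partial\mathcal{K}^{Co}(v,w')+x$ over $\|w'-w\|\leq\epsilon$; since the half-strip $I^{Co}(w'-v)$ rotates at angular speed $O(\|w'-v\|^{-1})$, this slice has area $O\bigl(r\epsilon+r^2\epsilon/\|w-v\|\bigr)$, and integrating over $w$ uses only that $\|w-v\|^{-1}$ is locally integrable in $\RR^2$. Alternatively, keep your stratification but replace the crude gradient-based tube bound by the sharper fact that for a Lipschitz graph the $\epsilon$-tube volume is at most an absolute constant times $\epsilon$ times its $\mathcal{H}^3$-measure (the Lipschitz constant cancels between the tube height and the area element); a dyadic decomposition in $\|w-v\|$ then reduces everything to the finiteness of $\mathcal{H}^3(\partial\tilde{\mathcal{K}}^{Co}(x,v)\cap (B(0,r)\times B(0,r)))$, which is $O(r^3)$ uniformly in $(x,v)$. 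Either repair also makes rigorous the step the paper itself leaves implicit, namely the passage from ``the boundary has finite measure'' to ``its $\epsilon$-neighbourhood has volume $O(\epsilon)$''.
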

\begin{proof}
Fix {the velocity} $w\in\RR^2$, we have $\partial\{y\in\RR^{2}:(y-x)\in\mathcal{K}^{Co}{(v,w)}\}=\partial\mathcal{K}^{Co}{(v,w)}+x$. Since $\partial\mathcal{K}^{Co}{(v,w)}$ is a convex polygon restricted in the sector of ``vision cone'' $\mathcal{C}(v)=\{z\in\RR^2:\cos(\alpha(z,v))<\kappa, \|w\|<r\}$, the measure of $\partial\mathcal{K}^{Co}{(v,w)}$ is bounded (see Figure~\ref{fig:Kco}). Since $f$ is compactly support in phase space, we conclude that the measure of $\partial\tilde{\mathcal{K}}^{Co}{(x,v)}=\underset{\|w\|<r}{\bigcup} \partial\mathcal{K}^{Co}{(v,w)}+x$ is bounded. So there exists $C>0$, such that $|\partial^\epsilon\tilde{\mathcal{K}}^{Co}{(x,v)}|<C\epsilon$.
And the second conclusion is easy to prove thanks to the relation
\begin{equation*}
\begin{split}
\tilde{\mathcal{K}}^{Co}{(x+\Delta x,v)} &= \{(y,w)\in\RR^2\times\RR^{2} : (y-x-\Delta x)\in\mathcal{K}^{Co}{(v,w)}\}\\
&=\{(y,w)\in\RR^2\times\RR^2 : (y-x)\in\mathcal{K}^{Co}{(v,w)}\}+\Delta x\\
&=\tilde{\mathcal{K}}^{Co}{(x,v)} + \Delta x.
\end{split}
\end{equation*}
\end{proof}

\begin{lemma}\label{le:lambda}
For any $(x,v),(\tilde x,\tilde v)\in\RR^2\times\RR^2$, we have
\begin{equation*}
\begin{split}
|\lambda^{Co}(t,x,v)-\lambda^{Co}(t,\tilde x,\tilde v)|&
\leq\iint\limits_{\RR^2\times\RR^2} \mathbbm{1}_{\tilde{\mathcal{K}}^{Co}(x,v)\Delta \tilde{\mathcal{K}}^{Co}(\tilde x,\tilde v)}(y,w)f(t,y,w)\rd y\rd w\\
 &\leq C(1+\frac{1}{\|v\|}) \left\|\begin{pmatrix}x\\v\end{pmatrix}-\begin{pmatrix}\tilde x\\ \tilde v\end{pmatrix}\right\|.
\end{split}
\end{equation*}
\end{lemma}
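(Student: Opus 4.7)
The idea is to bound $\lambda^{Co}$ in two stages: first reduce the difference to a symmetric‐difference measure against $f$ (this is the first inequality and is essentially automatic), then estimate the measure of that symmetric difference by splitting the change $(x,v)\mapsto(\tilde x,\tilde v)$ into a pure position shift and a pure velocity shift, treating each with a separate geometric argument.

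\textbf{First inequality.} Writing $\lambda^{Co}(t,x,v)=\iint \mathbbm{1}_{\tilde{\mathcal{K}}^{Co}(x,v)}(y,w)f(t,y,w)\,\rd y\,\rd w+\beta$, the difference of two integrals of indicator functions against a nonnegative $f$ is controlled by $\iint|\mathbbm{1}_A-\mathbbm{1}_B|\,f=\iint\mathbbm{1}_{A\Delta B}\,f$. This gives the first line directly.

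\textbf{Second inequality — a triangle inequality on sets.} I would insert the intermediate point $(\tilde x,v)$ and use
\begin{equation*}
\tilde{\mathcal{K}}^{Co}(x,v)\Delta\tilde{\mathcal{K}}^{Co}(\tilde x,\tilde v)\subset\bigl(\tilde{\mathcal{K}}^{Co}(x,v)\Delta\tilde{\mathcal{K}}^{Co}(\tilde x,v)\bigr)\cup\bigl(\tilde{\mathcal{K}}^{Co}(\tilde x,v)\Delta\tilde{\mathcal{K}}^{Co}(\tilde x,\tilde v)\bigr).
\end{equation*}
For the position shift, Lemma~\ref{le:pkco} gives $\tilde{\mathcal{K}}^{Co}(x,v)=\tilde{\mathcal{K}}^{Co}(\tilde x,v)+(x-\tilde x)$, so the symmetric difference sits inside the $\|x-\tilde x\|$‑boundary of $\tilde{\mathcal{K}}^{Co}(\tilde x,v)$, whose Lebesgue measure is $\le C\|x-\tilde x\|$. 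Using $f\in L^{\infty}$ and compact support in $w$, the corresponding integral is $\le C\|f\|_{L^{\infty}}\|x-\tilde x\|$.

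\textbf{Velocity shift — the main obstacle.} Here $\tilde{\mathcal{K}}^{Co}(\tilde x,v)=\{(y,w):(y-\tilde x)\in I^{Co}(w-v)\cap\mathcal{C}(v)\}$ depends on $v$ through two distinct geometric objects: the vision cone $\mathcal{C}(v)$ (a sector around the direction $v/\|v\|$) and the half‑strip $I^{Co}(w-v)$ (whose axis is the direction $(w-v)/\|w-v\|$). For fixed $w$, I would estimate the symmetric difference in the $y$ variable by the area swept under a rotation of each of these objects, using that the angular derivative of a unit vector $u/\|u\|$ with respect to $u$ is of order $1/\|u\|$. The vision cone contributes a symmetric difference of $y$‑measure $\lesssim r^{2}\|v-\tilde v\|/\|v\|$ within the support ball of radius $r$, which after multiplication by $\|f\|_{L^{\infty}}$ and the finite $w$‑range produces the factor $\|v-\tilde v\|/\|v\|$. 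The half‑strip $I^{Co}(w-v)$ rotates by an angle $\lesssim\|v-\tilde v\|/\|w-v\|$; the $y$‑area of the corresponding symmetric difference is again linear in this angle, and after integrating in $w$ the singularity $\|w-v\|^{-1}$ is integrable in $\mathbb{R}^{2}$ on the compact support of $f$, yielding a clean contribution $\le C\|f\|_{L^{\infty}}\|v-\tilde v\|$ with no $1/\|v\|$. Summing the two shifts gives $\le C(1+1/\|v\|)\|(x,v)-(\tilde x,\tilde v)\|$, which is exactly the asserted bound.

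\textbf{Where I expect difficulty.} The subtle point is the velocity‑shift estimate: one must carefully parametrize the boundaries of $\mathcal{C}(v)$ and of the rotated half‑strip $I^{Co}(w-v)$ so that the $\epsilon$‑boundary argument of Lemma~\ref{le:pkco} applies uniformly in $w$, and then justify the use of Fubini together with the integrability of $\|w-v\|^{-1}$ on the compact support of $f$. The factor $1/\|v\|$ is unavoidable and comes purely from differentiating $v/\|v\|$; it is the only place where the bound degenerates, which matches the quasi‑Lipschitz estimate eventually needed in Theorem~\ref{th:lip}.
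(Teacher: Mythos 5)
Your proposal matches the paper's proof in its overall architecture: the first inequality is obtained exactly as you describe, the splitting through the intermediate point $(\tilde x,v)$ is the same, the position shift is handled by the translation property and the $\epsilon$-boundary bound of Lemma~\ref{le:pkco}, and the vision-cone contribution to the velocity shift is estimated, as you do, via the rotation angle $|\alpha(v,\tilde v)|\le C\|v-\tilde v\|/\|v\|$, which is indeed the sole source of the $1/\|v\|$ degeneracy. The one place where you genuinely diverge is the treatment of the $v$-dependence of $I^{Co}(w-v)$ on the common region $\mathcal{C}(v)\cap\mathcal{C}(\tilde v)+\tilde x$. You fix $w$, bound the $y$-area swept by the rotating half-strip by $Cr^2\|v-\tilde v\|/\|w-v\|$, and then integrate the singularity $\|w-v\|^{-1}$ over the compact $w$-range, using $\sup_v\int_{\|w\|<r}\|w-v\|^{-1}\,\rd w<\infty$; this works, but it obliges you to check that the swept area really is linear in the rotation angle on a bounded region (it is only $\min(CL^2\theta,CRL)$ in general, so boundedness of the relevant $y-\tilde x$ range is needed, an assumption the paper also makes implicitly) and to justify the Fubini step. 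The paper instead fixes $y$ and observes that, once the vision-cone condition holds for both $v$ and $\tilde v$, the admissible set of velocities $S(y,v)=\{w:\mathbbm{1}_{\tilde{\mathcal{K}}^{Co}(\tilde x,v)}(y,w)=1,\ \|w\|\le r\}$ satisfies $S(y,\tilde v)=S(y,v)+(v-\tilde v)$ up to the ball constraint, because $I^{Co}$ depends on $w$ and $v$ only through $w-v$; hence $|S(y,v)\Delta S(y,\tilde v)|\le C\|v-\tilde v\|$ uniformly in $y$ and no singularity ever appears. Both routes yield the stated bound $C(1+1/\|v\|)\left\|\begin{pmatrix}x\\v\end{pmatrix}-\begin{pmatrix}\tilde x\\ \tilde v\end{pmatrix}\right\|$; the paper's translation trick is the cleaner of the two, while your sweep-and-integrate argument is somewhat more general since it does not exploit that special translation structure.
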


\begin{proof}
\begin{equation}\label{eq:lambdaCo}
\begin{array}{r l}
 &\displaystyle|\lambda^{Co}(t,x,v)-\lambda^{Co}(t,\tilde x,\tilde v)|\\
=&\displaystyle|\iint\limits_{\RR^2\times\RR^2} \mathbbm{1}_{\tilde{\mathcal{K}}^{Co}(x,v)}(y,w)f(t,y,w)\rd y\rd w-\iint\limits_{\RR^2\times\RR^2} \mathbbm{1}_{\tilde{\mathcal{K}}^{Co}(\tilde x,\tilde v)}(y,w)f(t,y,w)\rd y\rd w|\\
=&\displaystyle|\iint\limits_{\RR^2\times\RR^2} (\mathbbm{1}_{\tilde{\mathcal{K}}^{Co}(x,v)}(y,w)-\mathbbm{1}_{\tilde{\mathcal{K}}^{Co}(\tilde x,\tilde v)}(y,w))f(t,y,w)\rd y\rd w|\\
\leq&\displaystyle\iint\limits_{\RR^2\times\RR^2} |\mathbbm{1}_{\tilde{\mathcal{K}}^{Co}(x,v)}(y,w)-\mathbbm{1}_{\tilde{\mathcal{K}}^{Co}(\tilde x,\tilde v)}(y,w)|f(t,y,w)\rd y\rd w\\
\leq&\displaystyle\iint\limits_{\RR^2\times\RR^2} |\mathbbm{1}_{\tilde{\mathcal{K}}^{Co}(x,v)}(y,w)-\mathbbm{1}_{\tilde{\mathcal{K}}^{Co}(\tilde x, v)}(y,w)|f(t,y,w)\rd y\rd w\\
 &+\displaystyle\iint\limits_{\RR^2\times\RR^2} |\mathbbm{1}_{\tilde{\mathcal{K}}^{Co}(\tilde x,v)}(y,w)-\mathbbm{1}_{\tilde{\mathcal{K}}^{Co}(\tilde x,\tilde v)}(y,w)|f(t,y,w)\rd y\rd w\\
:= & I_1+I_2.
\end{array}
\end{equation}

From the Lemma~\ref{le:pkco}, the first term $I_1$ of Inequality~(\ref{eq:lambdaCo}) can be estimated as follows:
\begin{equation*}
\begin{array}{r l}
 &\displaystyle\iint\limits_{\RR^2\times\RR^2} |\mathbbm{1}_{\tilde{\mathcal{K}}^{Co}(x,v)}(y,w)-\mathbbm{1}_{\tilde{\mathcal{K}}^{Co}(\tilde x, v)}(y,w)|f(t,y,w)\rd y\rd w\\
 =&\displaystyle\iint\limits_{\RR^2\times\RR^2} \mathbbm{1}_{\tilde{\mathcal{K}}^{Co}(x,v)\Delta \tilde{\mathcal{K}}^{Co}(\tilde x, v)}(y,w)f(t,y,w)\rd y\rd w\\
 =&\displaystyle\iint\limits_{\RR^2\times\RR^2} \mathbbm{1}_{\partial^{|x-\tilde x|}\tilde{\mathcal{K}}^{Co}(x,v)}(y,w)f(t,y,w)\rd y\rd w\\
\leq&\displaystyle\|f\|_{L^\infty} |\partial^{|x-\tilde x|}\tilde{\mathcal{K}}^{Co}(x,v)|\leq C\|f\|_{L^\infty}\|x-\tilde x\|.
\end{array}
\end{equation*}

Next we estimate the second term, $I_2$, of Inequality~(\ref{eq:lambdaCo}). Since $\tilde{\mathcal{K}}^{Co}(\tilde x,v)\subset \mathcal{C}(v)+\tilde x$ and $\tilde{\mathcal{K}}^{Co}(\tilde x,\tilde v)\subset \mathcal{C}(\tilde v)+\tilde x$, we only need to consider $y\in\mathcal{C}(v)\bigcup\mathcal{C}(\tilde v)+\tilde x=(\mathcal{C}(v)\Delta\mathcal{C}(\tilde v)+\tilde x)\bigcup(\mathcal{C}(v)\bigcap\mathcal{C}(\tilde v)+\tilde x)$.\\
So,
\begin{itemize}
\item[(i)] in one hand, we have estimations as follows
\begin{equation*}
|\mathcal{C}(v)\Delta\mathcal{C}(\tilde v)+\tilde x|=|\mathcal{C}(v)\Delta\mathcal{C}(\tilde v)|=2\cdot \frac{1}{2} r^2 \cdot|\alpha(v,\tilde v)|.  
\end{equation*}

When $\|v-\tilde v\|< {1\over 2}\|v\|$, we have $|\alpha(v,\tilde v)|<\arcsin \cfrac{\|v-\tilde v\|}{\|v\|}\leq C' \cfrac{\|v-\tilde v\|}{\|v\|}$ for some proper constant $C'>2\pi$ independent of $\|v\|$. And if $\|v-\tilde v\|\geq {1\over 2}\|v\|$, we have $|\alpha(v,\tilde v)|\leq \pi \leq C' \cfrac{\|v-\tilde v\|}{\|v\|}$.

Then we deduce that
\begin{equation*}
|\mathcal{C}(v)\Delta\mathcal{C}(\tilde v)+\tilde x|=2\cdot \frac{1}{2} r^2 \cdot|\alpha(v,\tilde v)|\leq C \cfrac{\|v-\tilde v\|}{\|v\|},
\end{equation*}
where $C$ is a constant.
 
Thus, from the compactly support of $f$ in phase space, we get
\begin{equation*}
\begin{array}{r l}
 &\displaystyle\int\limits_{\mathcal{C}(v)\Delta\mathcal{C}(\tilde v)+\tilde x}\hskip-4mm \rd y\int\limits_{\RR^2} \mathbbm{1}_{\tilde{\mathcal{K}}^{Co}(\tilde x,v)\Delta \tilde{\mathcal{K}}^{Co}(\tilde x,\tilde v)}f(t,y,w)\rd w\\
\leq& \hskip-4mm\displaystyle\int\limits_{\mathcal{C}(v)\Delta\mathcal{C}(\tilde v)}\hskip-4mm \rd y \int\limits_{\RR^2} \|f\|_{L^\infty} \rd w\leq C \cfrac{\|v-\tilde v\|}{\|v\|} \|f\|_{L^\infty} \cdot \pi r^2.
\end{array}
\end{equation*}

%
%

\item[(ii)] In other hand, let $S(y,v):=\{w\in\RR^2:\mathbbm{1}_{\tilde{\mathcal{K}}^{Co}(\tilde x,v)}(y, w)=1, \|w\|\leq r\}$ be the intersection of the sector $\{w:\mathbbm{1}_{\tilde{\mathcal{K}}^{Co}(\tilde x,v)}(y, w)=1\}$ and the disc $\{w:\|w\|\leq r\}$. And it is easy to see that $S(y,\tilde v) = S(y,v) + (v-\tilde v)$, so $S(y,v)\Delta S(y,\tilde v) \subset \partial^{\|v-\tilde v\|} S(y,v)$. Then we have $|S(y,v)\Delta S(y,\tilde v)|<C\|v-\tilde v\|$. And we can obtain the estimation
\begin{equation*}
\begin{array}{r l}
 &\displaystyle\int\limits_{\mathcal{C}(v)\bigcap\mathcal{C}(\tilde v)+\tilde x}\hskip-4mm\rd y\int\limits_{\RR^2} \mathbbm{1}_{\tilde{\mathcal{K}}^{Co}(\tilde x,v)\Delta \tilde{\mathcal{K}}^{Co}(\tilde x,\tilde v)}f(t,y,w)\rd w\\
\leq&\hskip-4mm\displaystyle\int\limits_{\mathcal{C}(v)\bigcap\mathcal{C}(\tilde v)+\tilde x}\hskip-4mm\rd y \int\limits_{\RR^2} \mathbbm{1}_{S(y,v)\Delta S(y,\tilde v)} \|f\|_{L^\infty}\rd w\leq \pi r^2 \cdot \|f\|_{L^\infty} C \|v-\tilde v\|.
\end{array}
\end{equation*}
\end{itemize}

From the points (i) and (ii) we deduce the estimate of $I_2$ as follows
\begin{equation*}
\begin{array}{r l}
 &\displaystyle\iint\limits_{\RR^2\times\RR^2} |\mathbbm{1}_{\tilde{\mathcal{K}}^{Co}(\tilde x,v)}(y,w)-\mathbbm{1}_{\tilde{\mathcal{K}}^{Co}(\tilde x,\tilde v)}(y,w)|f(t,y,w)\rd y\rd w\\
=&\displaystyle\iint\limits_{\RR^2\times\RR^2} \mathbbm{1}_{\tilde{\mathcal{K}}^{Co}(\tilde x,v)\Delta \tilde{\mathcal{K}}^{Co}(\tilde x,\tilde v)}f(t,y,w)\rd y\rd w\\
=&\displaystyle(\int\limits_{\mathcal{C}(v)\Delta\mathcal{C}(\tilde v)+\tilde x}+\int\limits_{\mathcal{C}(v)\bigcap\mathcal{C}(\tilde v)+\tilde x})\rd y\int\limits_{\RR^2} \mathbbm{1}_{\tilde{\mathcal{K}}^{Co}(\tilde x,v)\Delta \tilde{\mathcal{K}}^{Co}(\tilde x,\tilde v)}f(t,y,w)\rd w\\
\leq& \displaystyle\int\limits_{\mathcal{C}(v)\Delta\mathcal{C}(\tilde v)} \rd y \int\limits_{\RR^2} \|f\|_{L^\infty} \rd w + \int\limits_{\mathcal{C}(v)\bigcap\mathcal{C}(\tilde v)+\tilde x}\rd y \int\limits_{\RR^2} \mathbbm{1}_{S(y,v)\Delta S(y,\tilde v)} \|f\|_{L^\infty}\rd w\\
\leq& C \cfrac{\|v-\tilde v\|}{\|v\|} \|f\|_{L^\infty} \cdot \pi r^2+\pi r^2 \cdot \|f\|_{L^\infty} C \|v-\tilde v\|\leq  C(1+\cfrac{1}{\|v\|})\|v-\tilde v\|.
\end{array}
\end{equation*}

Finally, we achieve the consequence of the Lemma,
\begin{equation*}
|\lambda^{Co}(t,x,v)-\lambda^{Co}(t,\tilde x,\tilde v)|
\leq  C \|x-\tilde x\| + C(1+\cfrac{1}{\|v\|})\|v-\tilde v\|\leq C(1+\frac{1}{\|v\|}) \left\|\begin{pmatrix}x\\v\end{pmatrix}-\begin{pmatrix}\tilde x\\ \tilde v\end{pmatrix}\right\|.
\end{equation*}
\end{proof}

Now we are ready to investigate as follows the proof of Proposition~\ref{pro:co}
\begin{proof}[Proof of Proposition~\ref{pro:co}]

The first conclusion is easy to prove. Since $m(z,v,w)$ is bounded, and $\lambda^{Co}(t,x,v)\geq \beta$, we have
\begin{equation*}
|\mathcal{F}^{Co}(f)(t,x,v)| \leq \frac{1}{\beta} C \Big| \iint\limits_{\RR^2\times\RR^2} f(y,w)\rd y\rd w \Big|\cdot\|v\| \leq C \|f\|_{L^1}\|v\|.
\end{equation*}
Next, we start to estimate $|\mathcal{F}^{Co}(f)(t,x,v)-\mathcal{F}^{Co}(f)(t,\tilde x,\tilde v)|$ by writing
\begin{equation*}
\begin{split}
&|\mathcal{F}^{Co}(f)(t,x,v)-\mathcal{F}^{Co}(f)(t,\tilde x,\tilde v)|\\
=&\left|\cfrac{1}{\lambda^{Co}(t,x,v)} \iint\limits_{\RR^2\times\RR^2} m^{Co}(y-x,v,w)\mathbbm{1}_{\tilde{\mathcal{K}}^{Co}(x,v)}(y,w)f(t,y,w)\rd y\rd w \cdot v^{\bot} \right.\\
&- \left.\cfrac{1}{\lambda^{Co}(t,\tilde x,\tilde v)} \iint\limits_{\RR^2\times\RR^2} m^{Co}(y-\tilde x,\tilde v,w)\mathbbm{1}_{\tilde{\mathcal{K}}^{Co}(\tilde x,\tilde v)}(y,w)f(t,y,w)\rd y\rd w\cdot \tilde{v}^\bot\right| \\
\leq&\left|(\cfrac{1}{\lambda^{Co}(t,x,v)}-\cfrac{1}{\lambda^{Co}(t,\tilde x,\tilde v)}) \iint\limits_{\RR^2\times\RR^2} m^{Co}(y-x,v,w)\mathbbm{1}_{\tilde{\mathcal{K}}^{Co}(x,v)}(y,w)f(t,y,w)\rd y\rd w \cdot v^{\bot}\right|\\
&+\left|\cfrac{1}{\lambda^{Co}(t,\tilde x,\tilde v)}(\iint\limits_{\RR^2\times\RR^2} m^{Co}(y-x,v,w)\mathbbm{1}_{\tilde{\mathcal{K}}^{Co}(x,v)}(y,w)f(t,y,w)\rd y\rd w \cdot v^{\bot}\right.\\
&-\left.\iint\limits_{\RR^2\times\RR^2} m^{Co}(y-\tilde x,\tilde v,w)\mathbbm{1}_{\tilde{\mathcal{K}}^{Co}(\tilde x,\tilde v)}(y,w)f(t,y,w)\rd y\rd w \cdot \tilde{v}^{\bot})\right|\\
&:= I_1 + I_2.
\end{split}
\end{equation*}

From Lemma~\ref{le:lambda}, we have
\begin{equation*}
\begin{split}
I_1 &\leq C\|f\|_{L^1}\|v\| \cfrac{|\lambda^{Co}(t,x,v)-\lambda^{Co}(t,\tilde x,\tilde v)|}{\lambda^{Co}(t,x,v)\cdot\lambda^{Co}(t,\tilde x,\tilde v)}\leq C\|f\|_{L^1}(1+\|v\|) \frac{1}{\beta^2} \|(x,v)-(\tilde x,\tilde v)\|\\
& \leq C\|f\|_{L^1}(1+\|v\|) \left\|\begin{pmatrix}x\\v\end{pmatrix}-\begin{pmatrix}\tilde x\\ \tilde v\end{pmatrix}\right\|.
\end{split}
\end{equation*}

For the estimation of $I_2$, we split it into two parts, thanks to the given sets $S_1=\tilde{\mathcal{K}}^{Co}(x,v)\cap \tilde{\mathcal{K}}^{Co}(\tilde x,\tilde v)$, $S_2=\tilde{\mathcal{K}}^{Co}(x,v)\Delta \tilde{\mathcal{K}}^{Co}(\tilde x,\tilde v)$.
\begin{equation*}
\begin{array}{l l}
I_2 \leq& \left|\cfrac{1}{\lambda^{Co}(t,\tilde x,\tilde v)}(\iint\limits_{S_1} m^{Co}(y-x,v,w)f(t,y,w)\rd y\rd w \cdot v^{\bot}\right.-\left.\iint\limits_{S_1} m^{Co}(y-\tilde x,\tilde v,w)f(t,y,w)\rd y\rd w \cdot \tilde{v}^{\bot})\right|\\
&+ \left|\cfrac{1}{\lambda^{Co}(t,\tilde x,\tilde v)}(\iint\limits_{S_2} m^{Co}(y-x,v,w)\mathbbm{1}_{\tilde{\mathcal{K}}^{Co}(x,v)}(y,w)f(t,y,w)\rd y\rd w \cdot v^{\bot}\right.\\
&\left.-\iint\limits_{S_2} m^{Co}(y-\tilde x,\tilde v,w)\mathbbm{1}_{\tilde{\mathcal{K}}^{Co}(\tilde x,\tilde v)}(y,w)f(t,y,w)\rd y\rd w \cdot \tilde{v}^{\bot})\right|\\
&:= I_{21} + I_{22}.
\end{array}
\end{equation*}

From the smoothness of $\cos^\varepsilon(\alpha(z,w)), e^{-\tau(z,w-v)}$ and $g(\cfrac{(w-v)\times z}{\|z\|^2})$, we know that $m(z,v,w)$ is Lipschitz continuous. Then
\begin{equation*}
I_{21} \leq \frac{1}{\beta}\|f\|_{L^1}\|v\|C \left\|\begin{pmatrix}x\\v\end{pmatrix}-\begin{pmatrix}\tilde x\\ \tilde v\end{pmatrix}\right\|.
\end{equation*}

From the boundedness of $m^{Co}$ and the definition of $S_2$, we have
\begin{equation*}
\begin{array}{r c l}
I_{22} &\leq& \left|\cfrac{1}{\lambda^{Co}(t,\tilde x,\tilde v)}\iint\limits_{S_2} m^{Co}(y-x,v,w)\mathbbm{1}_{\tilde{\mathcal{K}}^{Co}(x,v)}(y,w)f(t,y,w)\rd y\rd w \cdot v^{\bot}\right|\\
& &+ \left|\cfrac{1}{\lambda^{Co}(t,\tilde x,\tilde v)}\iint\limits_{S_2} m^{Co}(y-\tilde x,\tilde v,w)\mathbbm{1}_{\tilde{\mathcal{K}}^{Co}(\tilde x,\tilde v)}(y,w)f(t,y,w)\rd y\rd w \cdot \tilde{v}^{\bot}\right|\\
& \leq& \left|\cfrac{1}{\lambda^{Co}(t,\tilde x,\tilde v)}\iint\limits_{S_2} m^{Co}(y-x,v,w)\mathbbm{1}_{\tilde{\mathcal{K}}^{Co}(x,v)}(y,w)f(t,y,w)\rd y\rd w \right|\cdot \|v\|\\
& &+ \left|\cfrac{1}{\lambda^{Co}(t,\tilde x,\tilde v)}\iint\limits_{S_2} m^{Co}(y-\tilde x,\tilde v,w)\mathbbm{1}_{\tilde{\mathcal{K}}^{Co}(\tilde x,\tilde v)}(y,w)f(t,y,w)\rd y\rd w\right|\cdot \|v\|\\
& &+ \left|\cfrac{1}{\lambda^{Co}(t,\tilde x,\tilde v)}\iint\limits_{S_2} m^{Co}(y-\tilde x,\tilde v,w)\mathbbm{1}_{\tilde{\mathcal{K}}^{Co}(\tilde x,\tilde v)}(y,w)f(t,y,w)\rd y\rd w \right|\cdot \|v-\tilde v\|\\
&\leq & \cfrac{2}{\beta} C \|f\|_{L^\infty} \|v\| |S_2| +  \cfrac{1}{\beta} C \|f\|_{L^1} \|v-\tilde v\| \quad {\text{where }|S_2|\text{ is the measure of } S_2}\\
& \leq & C\|v\|(1+\cfrac{1}{\|v\|})\left\|\begin{pmatrix}x\\v\end{pmatrix}-\begin{pmatrix}\tilde x\\ \tilde v\end{pmatrix}\right\|+C\|v-\tilde v\|\\
& \leq & C(1+\|v\|)\left\|\begin{pmatrix}x\\v\end{pmatrix}-\begin{pmatrix}\tilde x\\ \tilde v\end{pmatrix}\right\|.
\end{array}
\end{equation*}

The second last inequality above is deduced from the proof of Lemma~\ref{le:lambda}. Thus, we have
\begin{equation*}
\begin{split}
I_2 &\leq C(1+\|v\|)\left\|\begin{pmatrix}x\\v\end{pmatrix}-\begin{pmatrix}\tilde x\\ \tilde v\end{pmatrix}\right\|.
\end{split}
\end{equation*}

Finally, we get
\begin{equation*}
|\mathcal{F}^{Co}(f)(t,x,v)-\mathcal{F}^{Co}(f)(t,\tilde x,\tilde v)|\leq C(1+\|v\|)\left\|\begin{pmatrix}x\\v\end{pmatrix}-\begin{pmatrix}\tilde x\\ \tilde v\end{pmatrix}\right\|.
\end{equation*}
\end{proof}

We have similar conclusions for $\mathcal{F}^{Im}(f), \mathcal{F}^{Fo}(f)$, and the proof follows the same strategy as that for $\mathcal{F}^{Co}$. Otherwise, one can easily complete the proof of the following proposition that we leave to the reader.

\begin{proposition}\label{pro:imfo}
For any $(x,v)\in\RR^2\times\RR^2, t\in[0,T]$, we have following estimations,
\begin{align*}
|\mathcal{F}^{Im}(f)(t,x,v)| \leq C\|v\| \|f\|_{L^1},\\
|\mathcal{F}^{Fo}(f)(t,x,v)| \leq C\|v\| \|f\|_{L^1},
\end{align*} and
\begin{align*}
|\mathcal{F}^{Im}(f)(t,x,v)-\mathcal{F}^{Im}(f)(t,\tilde x,\tilde v)|\leq C(1+\|v\|)\left\|\begin{pmatrix}x\\v\end{pmatrix}-\begin{pmatrix}\tilde x\\ \tilde v\end{pmatrix}\right\|,\\
|\mathcal{F}^{Fo}(f)(t,x,v)-\mathcal{F}^{Fo}(f)(t,\tilde x,\tilde v)|\leq C(1+\|v\|)\left\|\begin{pmatrix}x\\v\end{pmatrix}-\begin{pmatrix}\tilde x\\ \tilde v\end{pmatrix}\right\|,
\end{align*}
where
\begin{equation*}
\begin{array}{r l}
\mathcal{F}^{Im}(f)(t,x,v) &= -\Omega_{Im}\cdot v\\
 &= \cfrac{1}{\lambda^{Im}(t,x,v)} \iint\limits_{\RR^2\times\RR^2} m^{Im}(y-x,v,w)\mathbbm{1}_{\mathcal{K}^{Im}(v,w)}(y-x)f(t,y,w)\rd y\rd w \cdot (-v),\\

\mathcal{F}^{Fo}(f)(t,x,v) &= \Omega_{Fo}\cdot v^\bot\\
 &= \cfrac{1}{\lambda^{Fo}(t,x,v)} \iint\limits_{\RR^2\times\RR^2} m^{Fo}(y-x,v,w)\mathbbm{1}_{\mathcal{K}^{Fo}(v,w)}(y-x)f(t,y,w)\rd y\rd w \cdot v^\bot.
\end{array}
\end{equation*}
\end{proposition}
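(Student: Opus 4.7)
The plan is to split $\mathcal{F}(f)=\mathcal{F}^{Co}(f)+\mathcal{F}^{Im}(f)+\mathcal{F}^{Fo}(f)+\mathcal{F}^{Ex}(f)$ and prove a bound of the stated form on each summand separately. The exit force $\mathcal{F}^{Ex}(f)(x,v)=-\nabla_x V(x)-\sigma v$ is globally Lipschitz with constant depending only on the Lipschitz constant of $\nabla_x V$ and on $\sigma$, so the work is concentrated on the three interaction forces. Since all three share the structural template
\begin{equation*}
\mathcal{F}^{\star}(f)(t,x,v)=\frac{1}{\lambda^{\star}(t,x,v)}\iint_{\RR^2\times\RR^2} m^{\star}(y-x,v,w)\mathbbm{1}_{\mathcal{K}^{\star}(v,w)}(y-x)f(t,y,w)\,\rd y\,\rd w\cdot\xi(v),
\end{equation*}
with $\xi(v)=v^{\bot}$ or $\xi(v)=-v$, I would treat $\mathcal{F}^{Co}$ in detail as a model and remark that $\mathcal{F}^{Im}$, $\mathcal{F}^{Fo}$ follow by identical reasoning with obvious replacements.

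First I would absorb the moving indicator into a set depending on $(x,v)$ by introducing $\tilde{\mathcal{K}}^{Co}(x,v)=\{(y,w):y-x\in\mathcal{K}^{Co}(v,w)\}$, so that $\mathbbm{1}_{\mathcal{K}^{Co}(v,w)}(y-x)=\mathbbm{1}_{\tilde{\mathcal{K}}^{Co}(x,v)}(y,w)$, and record two elementary geometric facts. The first is the translation invariance $\tilde{\mathcal{K}}^{Co}(x+\Delta x,v)=\tilde{\mathcal{K}}^{Co}(x,v)+\Delta x$ and the estimate $|\partial^{\epsilon}\tilde{\mathcal{K}}^{Co}(x,v)|\leq C\epsilon$ for the $\epsilon$-thickened boundary; the second is that the symmetric difference of two vision cones $\mathcal{C}(v)\Delta\mathcal{C}(\tilde v)$, intersected with the support of $f$ in $w$, has area bounded by $C\|v-\tilde v\|/\|v\|$ when $\|v-\tilde v\|<\|v\|/2$ and by a constant otherwise. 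These two facts together yield
\begin{equation*}
|\lambda^{Co}(t,x,v)-\lambda^{Co}(t,\tilde x,\tilde v)|\leq C\Bigl(1+\tfrac{1}{\|v\|}\Bigr)\,\Bigl\|\begin{pmatrix}x\\v\end{pmatrix}-\begin{pmatrix}\tilde x\\\tilde v\end{pmatrix}\Bigr\|,
\end{equation*}
after splitting the move $(x,v)\to(\tilde x,\tilde v)$ through the intermediate point $(\tilde x,v)$: translation first, rotation second. Here I use compact support of $f$ crucially to convert the geometric area bounds into $L^1$ bounds via $\|f\|_{L^\infty}$.

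Armed with this, for the Lipschitz estimate on $\mathcal{F}^{Co}$ itself I would telescope
\begin{equation*}
\mathcal{F}^{Co}(f)(t,x,v)-\mathcal{F}^{Co}(f)(t,\tilde x,\tilde v)=\Bigl[\tfrac{1}{\lambda^{Co}(t,x,v)}-\tfrac{1}{\lambda^{Co}(t,\tilde x,\tilde v)}\Bigr]N(x,v)\cdot v^{\bot}+\tfrac{1}{\lambda^{Co}(t,\tilde x,\tilde v)}\bigl[N(x,v)\cdot v^{\bot}-N(\tilde x,\tilde v)\cdot\tilde v^{\bot}\bigr],
\end{equation*}
where $N$ denotes the numerator. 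The first piece is controlled by the denominator estimate together with $\lambda^{Co}\geq\beta$, the boundedness of $m^{Co}$, and the prefactor $\|v\|$ coming from $v^{\bot}$, which absorbs the $1/\|v\|$ blow-up and delivers a $(1+\|v\|)$ factor. For the second piece I split the domain into the common set $\tilde{\mathcal{K}}^{Co}(x,v)\cap\tilde{\mathcal{K}}^{Co}(\tilde x,\tilde v)$, on which the integrand is controlled by the Lipschitz continuity of $(z,v,w)\mapsto m^{Co}(z,v,w)$ (using that $\cos^{\varepsilon}$ is smooth by mollification and that $g$ and the $\tau$-exponential are smooth), and the symmetric difference $\tilde{\mathcal{K}}^{Co}(x,v)\Delta\tilde{\mathcal{K}}^{Co}(\tilde x,\tilde v)$, whose measure is bounded by the same area estimate used for $\lambda^{Co}$.

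The main obstacle I anticipate is precisely this rotation step: the vision cone pivots about $x$ with angular speed proportional to $1/\|v\|$, so a naive Lipschitz bound in $v$ fails near $v=0$. The resolution — and the reason the theorem is stated with the $(1+\|v\|)$ prefactor rather than a constant — is that every force term either vanishes to first order in $\|v\|$ (through $v^{\bot}$ or $-v$) or is globally Lipschitz ($\mathcal{F}^{Ex}$), so the $1/\|v\|$ singularity in $\Omega$ is cancelled by the $\|v\|$ in $\xi(v)$. Once this cancellation is recorded, the arguments for $\mathcal{F}^{Im}$ and $\mathcal{F}^{Fo}$ are carbon copies: $m^{Im}$ and $m^{Fo}$ are smooth and bounded on the support of $f$ (the $\sqrt{\max(4R_0^2-D^2,0)}$ factor in $\tilde\tau$ is harmless because $R_0=0$ in the mean-field limit, and the $\dot\alpha$ factor in $m^{Fo}$ is Lipschitz on $\{\|z\|\geq R_0\}$ by the same compact support argument). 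Summing the four contributions and collecting constants in terms of $\|f\|_{L^1\cap L^\infty}$ and the diameter of $\mathrm{supp}(f)$ closes the proof.
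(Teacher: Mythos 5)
Your proposal matches the paper's treatment: the paper proves the corresponding estimates for $\mathcal{F}^{Co}$ in detail (Proposition~\ref{pro:co}, via Lemmas~\ref{le:pkco} and~\ref{le:lambda}) and then asserts that $\mathcal{F}^{Im}$ and $\mathcal{F}^{Fo}$ follow by the same strategy, leaving the details to the reader --- exactly your ``carbon copy'' step, which you in fact justify slightly more carefully by checking boundedness and Lipschitz continuity of $m^{Im}$ and $m^{Fo}$. One small correction: the reason the $\dot\alpha$ factor in $m^{Fo}$ causes no trouble is not a lower bound $\|z\|\geq R_0$ (vacuous once $R_0=0$) but the exact cancellation $\|\dot\alpha(z,w-v)\|\,\|z\|^2=|z\times(w-v)|$, which makes the exponent in $m^{Fo}$ Lipschitz on the compact support of $f$.
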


For the estimation of $|\mathcal{F}(f)(t,x,v)-\mathcal{F}(f)(t,\tilde x,\tilde v)|$, we write
\begin{equation*}
\begin{split}
|\mathcal{F}(f)(t,x,v)-\mathcal{F}(f)(t,\tilde x,\tilde v)|\leq
|\mathcal{F}^{Co}(f)(t,x,v)-\mathcal{F}^{Co}(f)(t,\tilde x,\tilde v)|\\
+|\mathcal{F}^{Im}(f)(t,x,v)-\mathcal{F}^{Im}(f)(t,\tilde x,\tilde v)|\\
+|\mathcal{F}^{Fo}(f)(t,x,v)-\mathcal{F}^{Fo}(f)(t,\tilde x,\tilde v)|\\
+|\mathcal{F}^{Ex}(f)(t,x,v)-\mathcal{F}^{Ex}(f)(t,\tilde x,\tilde v)|.
\end{split}
\end{equation*}
 By Proposition~\ref{pro:co} and~\ref{pro:imfo}, we know $\mathcal{F}^{Co}(f)(t,x,v),\mathcal{F}^{Im}(f)(t,x,v)$ and $\mathcal{F}^{Fo}(f)(t,x,v)$ are all Lipschitz continuous. From the assumption that $\nabla_x V(x)$ is Lipschitz continuous, we know that the exit-interaction force $\mathcal{F}^{Ex}(f)$ is also Lipschitz continuous,
\begin{equation*}
\begin{array}{r l}
 &|\mathcal{F}^{Ex}(f)(t,x,v)-\mathcal{F}^{Ex}(f)(t,\tilde x,\tilde v)|\\
\leq&\|\nabla_x V(x)-\nabla_x V(\tilde x)\|+\sigma\|v-\tilde v\|\\
\leq& C\|x-\tilde x\|+\sigma\|v-\tilde v\|\leq C\left\|\begin{pmatrix}x\\v\end{pmatrix}-\begin{pmatrix}\tilde x\\ \tilde v\end{pmatrix}\right\|.
\end{array}
\end{equation*}

Finally, we achieve the proof of Theorem~\ref{th:lip},
\begin{equation*}
|\mathcal{F}(f)(t,x,v)-\mathcal{F}(f)(t,\tilde x,\tilde v)|\leq C(1+\|v\|)\left\|\begin{pmatrix}x\\v\end{pmatrix}-\begin{pmatrix}\tilde x\\ \tilde v\end{pmatrix}\right\|,
\end{equation*}
for all $x,v,\tilde x,\tilde v\in\RR^2$ and $t\in[0,T]$.


\bibliographystyle{SIAM}
\bibliography{bib}

\end{document}